\theoremstyle{plain}
\newtheorem{thm}{Theorem}[section]
\newaliascnt{cor}{thm}
\newaliascnt{prop}{thm}
\newaliascnt{lem}{thm}
\newtheorem{cor}[cor]{Corollary}
\newtheorem{prop}[prop]{Proposition}
\newtheorem{lem}[lem]{Lemma}
\theoremstyle{definition}
\newaliascnt{defn}{thm}
\newaliascnt{asu}{thm}
\newaliascnt{con}{thm}
\newtheorem{asu}[asu]{Assumption}
\newcounter{stp}
\newcounter{stpi}
\newcounter{stpci}
\newcounter{stpiii}
\newtheorem{step}[stp]{\rm\em Step}
\theoremstyle{thm}
\newaliascnt{rem}{thm}
\newaliascnt{exa}{thm}
\newaliascnt{masu}{thm}
\newaliascnt{nota}{thm}
\newaliascnt{sett}{thm}
\newtheorem{rem}[rem]{Remark}
\numberwithin{equation}{section}
\setlist[enumerate]{font = \normalfont}
\newcommand {\N}	{\mathbb{N}}
\newcommand {\R}	{\mathbb{R}}
\newcommand {\E}	{\mathbb{E}}
\newcommand {\F}	{\mathbb{F}}
\newcommand {\G}	{\mathbb{G}}
\newcommand {\T}	{\mathbb{T}}
\renewcommand{\d}{\, \mathrm{d}}
\DeclareMathOperator{\mdiv}{div}
\DeclareMathOperator{\divH}{div_{\H}}
\newcommand{\Hinfty}{\mathcal{H}^\infty}
\newcommand{\sD}{\mathcal{D}}
\newcommand{\sL}{\mathcal{L}}
\newcommand{\D}{\mathrm{D}}
\newcommand{\rN}{\mathrm{N}}
\renewcommand{\H}{\mathrm{H}}
\newcommand{\loc}{\mathrm{loc}}
\newcommand{\bc}{\mathrm{b.c.}}
\newcommand{\sigmabar}{\bar{\sigma}}
\newcommand{\atm}{\mathrm{a}}
\newcommand{\ocn}{\mathrm{o}}
\newcommand{\air}{\mathrm{a}}
\newcommand{\ssatm}[1]{{#1}^{\atm}}
\newcommand{\ssocn}[1]{{#1}^{\ocn}}
	\newcommand{\hatm}{\ssatm{h}}
	\newcommand{\Omegaatm}{\ssatm{\Omega}}
	\newcommand{\Omegaocn}{\ssocn{\Omega}}
	\newcommand{\GaD}{\Gamma_{\D}}
	\newcommand{\Gau}{\Gamma_u^{\ocn}}
	\newcommand{\Gab}{\Gamma_b^{\ocn}}
	\newcommand{\dk}[1]{\partial_{#1}}
	\newcommand{\dt}{\dk{t}} 
	\newcommand{\dz}{\dk{z}} 
	\newcommand{\tr}{\mathrm{tr}}
	\newcommand{\eps}{\varepsilon}
	\renewcommand{\phi}{\varphi}
	\newcommand{\vatm}{\ssatm{v}}
	\newcommand{\vocn}{v^{\ocn}}
	\newcommand{\wocn}{w^{\ocn}}
	\newcommand{\uatm}{u^{\atm}}
	\newcommand{\uocn}{u^{\ocn}}
	\newcommand{\fatm}{\ssatm{f}}
	\newcommand{\focn}{f^{\ocn}}
	\newcommand{\xH}{x_\mathrm{H}}
	\renewcommand{\bar}[1]{\overline{#1}}
	\newcommand{\vbar}{\bar{v}}
	\newcommand{\fbar}{\bar{f}}
	\newcommand{\vtilde}{\tilde{v}}
	\newcommand{\ftilde}{\tilde{f}}
	\newcommand{\vbaratm}{\ssatm{\vbar}}
	\newcommand{\vbarocn}{\ssocn{\vbar}}
	\newcommand{\nablaH}{\nabla_{\H}}
	\newcommand{\DeltaH}{\Delta_{\H}}
	\newcommand{\hra}{\hookrightarrow}
	\newcommand{\rC}{\mathrm{C}}
	\newcommand{\rL}{\mathrm{L}}
	\newcommand{\rW}{\mathrm{W}}
	\newcommand{\rH}{\H}
	\newcommand{\rB}{\mathrm{B}}
	\newcommand{\rF}{\mathrm{F}}
	\newcommand{\rS}{\mathrm{S}}
	\newcommand{\rLq}{\rL^q}
	\newcommand{\rLp}{\rL^p}
	\newcommand{\on}[1]{\text{ on } {#1}}
	\renewcommand{\for}{\text{ for }}
	\newcommand{\onOmega}{\on{\Omega}}
	\newcommand{\onOmegaT}{\on{\Omega \times (0,T)}}
	\newcommand{\uair}{u^{\air}}
	\newcommand{\vair}{v^{\air}}
	\newcommand{\omegaair}{\omega^{\air}}
	\newcommand{\pair}{p^{\air}}
	\newcommand{\fair}{f^{\air}}
	\newcommand{\vairbar}{\bar{v}^{\air}}
	\newcommand{\vocnbar}{\bar{v}^{\ocn}}
	\newcommand{\vairtilde}{\tilde{v}^{\air}}
	\newcommand{\vocntilde}{\tilde{v}^{\ocn}}
	\newcommand{\Omegaair}{\Omega^{\air}}
	\newcommand{\rX}{\mathrm X}
    \newcommand{\Gammaairu}{\Gamma_u^{\atm}}
    \newcommand{\Gammaairb}{\Gamma_b^{\atm}}
    \newcommand{\wair}{w^\atm}
\title[Global Strong Well-Posedness of the CAO-Problem introduced by Lions, Temam and Wang]{Global strong Well-Posedness of the CAO-Problem introduced by Lions, Temam and Wang}
\author{Tim Binz\textsuperscript{1}}
\address{\textsuperscript{1}Princeton University\\
       Program in Applied \& Computational Mathematics\\
       Fine Hall, Washington Road\\
       08544 Princeton, NJ\\
       USA}
\email{tb7523@princeton.edu}
\author{Felix Brandt\textsuperscript{2}}\label{A2}
\address{\textsuperscript{2}University of California, Berkeley\\
	Department of Mathematics\\
	Evans Hall, University Dr\\
	94720 Berkeley, CA\\
	USA}
\email{fbrandt@berkeley.edu}
\author{Matthias Hieber\textsuperscript{3}}\label{A3}
\address{\textsuperscript{3}Technische Universit\"at Darmstadt\\
	Fachbereich Mathematik\\
	Schlossgartenstr.\ 7\\
	64289 Darmstadt\\
	Germany}
\email{hieber@mathematik.tu-darmstadt.de}
\author{Tarek Z\"{o}chling\textsuperscript{3}}\label{A4}
\email{zoechling@mathematik.tu-darmstadt.de}
\subjclass{35Q86, 35Q35, 76D03, 35K55}%
\keywords{CAO-problem, primitive equations, nonlinear interface boundary conditions, strong global well-posedness, analyticity of solutions}
\begin{document}

\maketitle	

\begin{center}
    \textsuperscript{1}Program in Applied \& Computational Mathematics, Princeton University

    \textsuperscript{2}Department of Mathematics, University of California, Berkeley

    \textsuperscript{3}Fachbereich Mathematik, Technische Universit\"{a}t Darmstadt 
\end{center}
 
\begin{abstract}
Consider the CAO-problem introduced by Lions, Temam and Wang, which concerns  a system of two fluids described by two primitive equations coupled by fully nonlinear interface conditions. 
They proved in their pioneering work the existence of a weak solution to the CAO-system; its uniqueness remained  an open problem. In this article, it is shown  that this coupled CAO-system 
is globally strongly well-posed for large data, even in critical Besov spaces. It is furthermore shown that, away from the boundary, the solution is even real analytic. The approach presented relies on an 
optimal data result for the boundary terms in the linearized system in terms of time-space Triebel-Lizorkin spaces. Boundary terms are then controlled by paraproduct methods in these spaces.       
\end{abstract}

\section{Introduction}\label{sec:intro}
Geophysical fluid dynamics constitutes a thriving scientific field, and its flows are characterized by a multitude of characteristic length and time scales and by their interactions with a 
range of dynamically relevant processes. In this context the primitive equations serve as a standard model for oceanic and atmospheric dynamics and are derived from the 
Navier-Stokes equations by assuming a hydrostatic balance for the pressure term.   
They have been introduced in a series of papers  by Lions, Temam and Wang ~\cite{LTW:92a, LTW:92b, LTW:93, LTW:95} about 30 years ago, see also \cite{PTZ:09}. The primitive equations are, however, only one 
building block of a coupled  system introduced  in the articles cited above and describing  a two-phase system of fluids subject to fully  nonlinear boundary conditions. This system, denoted by 
them by CAO, was derived carefully by them investigating boundary layer effects near the interface.  The rigorous analysis of the CAO-system has a very  rich history but, on the other hand, 
many questions are left open until today. For general  information on geophysical flows we refer e.\ g.\ to the work of Chemin, Desjardins, Gallagher and Grenier \cite{CDGG:06} and for the 
Navier-Stokes equations e.\ g.\ to the work of Galdi \cite{Gal:11}, Farwig, Kozono and Sohr \cite{FKS:05}, Abe and Giga \cite{AG:13} and Lemari\'e-Rieusset \cite{LR:16}.

Lions, Temam and Wang proved in their pioneering work the existence of a weak solution to the primitive equations and to the CAO-system; its uniqueness
remains an open problem for  both systems until today. 

On the other hand, the primitive equations subject to standard Dirichlet and Neumann boundary conditions are known to be globally strongly well-posed in the three-dimensional setting  
for arbitrarily  large data belonging to $\rH^1$ by the celebrated result of Cao and Titi \cite{CT:07}. For related results, see \cite{KZ:07,HK:16,GGHHK:20b}.     

It is the aim of this article to show  that the coupled CAO-system is globally well-posed in the strong sense for large data, even in critical Besov spaces. Recalling that uniqueness of  
weak solutions constructed by Lions, Temam and Wang is not known, it is hence surprising that we are able to prove that for initial data even in critical Besov spaces of the form 
$\rB^{2/q}_{qp}$, the CAO-system admits a {\em unique, global, strong} solution within the $\rH^{1,p}(0,T;\rL^q) \cap \rL^p(0,T;\rH^{2,q})$-framework. It holds true in particular for the case $p=q=2$ and hence 
in the $\rH^1(0,T;\rL^2) \cap \rL^2(0,T;\rH^2)$-situation.  In the absence of outer forces, the solution is even real analytic away from the boundary.  Our result described  in 
\autoref{thm:globalwellposed} below gives hence an answer to a long-standing open problem in fluid dynamics.          

At this point, some words about the CAO-model are in order. Very roughly speaking, it couples the compressible primitive equations describing the dynamics of the  atmosphere and 
the incompressible primitive equations for the dynamics of the ocean through a fully nonlinear  interface condition. Lions, Temam and Wang introduced in their analysis of the 
compressible equation for the atmosphere a new $p$-coordinate system, which ``resembles as far as the mathematical structure of the equations is concerned that of an incompressible fluid''.
In fact, assuming the pressure of the atmosphere at the interface to be constant, one obtains in this way  essentially an incompressible primitive equation.      

The physical law describing the driving mechanism on the interface is the  balance of the shear stress of the fluid and the horizontal component 
of the atmosphere force. The shear stress of the ocean, i.\ e., the tangential component of the stress tensor, is given by $\partial_z v^{\mathrm{o}} + \nablaH w$, which due to the flatness of the interface 
equals $\partial_z v^{\mathrm{o}}$. On the other hand, the horizontal atmospheric force is given as a drag force of the form $ c\varrho^{\mathrm{a}} (v^{\mathrm{a}}-v^{\mathrm{o}})|v^{\mathrm{a}}-v^{\mathrm{o}}|$. 

Observing that two-phase flows with  linear coupling conditions are rather  well understood, we note that this is not the case for fully nonlinear interface conditions as formulated in CAO.     
   
Let us now roughly describe the coupling conditions within the CAO-system. We refer the reader to \cite{LTW:95} for a detailed discussion.  
Assuming the interface to be fixed and flat, and following the physical law described above, they read as 
\begin{equation}\label{eq:coupling}   
c_1 \varrho^{\mathrm{a}} \partial_z v^{\mathrm{o}} = - c_2 \varrho^{\mathrm{a}} \partial_z  v^{\mathrm{a}} = c \varrho^{\mathrm{a}}(v^{\mathrm{a}}-v^{\mathrm{o}})|v^{\mathrm{a}}-v^{\mathrm{o}}|.
\end{equation}
Here  $v^{\mathrm{o}}, v^{\mathrm{a}}$ denote the horizontal velocities of the ocean and the atmosphere, respectively, $\varrho^{\mathrm{a}}, \varrho^{\mathrm{o}}$ their densities, $c_1,c_2$ are physical constants, and $c$  is a 
coefficient arising from boundary layer theory. 

One might suspect that a natural boundary condition on the interface between two immiscible viscous fluids  would be an adherence condition, i.\ e., $v^{\mathrm{o}}=v^{\mathrm{a}}$.  
This condition, however,  cannot be used due to the occurrence of boundary layers in the atmosphere and in the ocean at the interface. The atmospheric boundary layer may be divided into two 
horizontal layers, the Prandtl layer and the planetary boundary layer. Following Lions, Temam and Wang \cite{LTW:95}, the above condition \eqref{eq:coupling} takes into account 
these boundary layers by assuming that they are geometrically replaced by an interface $\Gamma$ whose upper side corresponds to the top of the Prandtl layer and the 
lower side to the bottom of the ocean boundary layer. For more information on these boundary layers, we refer e.\ g.\ to the work of Dalibard and G\'erard-Varet \cite{DGV:17} or Dalibard and 
Saint-Raymond \cite{DS:09}.

To avoid the difficulty of fully nonlinear interface conditions, they are often  replaced by simplified linear conditions as $\partial_z \vocn = |\vatm| \cdot \vatm$ for a given function $\vatm$.
As pointed out in \cite{LTW:93} and \cite{BS:01}, this modification is unjustified and generates unrealistic solutions from a physical point of view.
For a discussion of these boundary conditions, we refer e.\ g.\ to the work of Bresch and Simon  \cite{BS:01}. For related results about the Navier-Stokes equations with wind-driven boundary conditions, we 
refer to the work of Desjardins and Grenier \cite{DG:00}, Bresch and Simon \cite{BS:01}, Bresch, Guill\'en-Gonz\'alez, Masmoudi and Rodr\'iguez-Bellido \cite{BGMR:03} and Dalibard and Saint-Raymond \cite{DS:09}.
The primitive equations with stochastic wind-driven boundary conditions, where $\vatm$ is a given stochastic process, were analyzed recently in \cite{BHHS:23}.   

In contrast to the existing results in the literature, our aim is not to  simplify the interface condition,  but to consider the original CAO-system subject  to fully 
nonlinear interface conditions.

Concerning  the uniqueness problem of weak solutions for a  single primitive equations, several approaches have been developed in the last years aiming for extending well-posedness results 
for the primitive equations  to the case of rough initial data. We refer here to the  work \cite{LT:16}, yielding uniqueness of  weak solutions under special assumptions on the data.
For the existence of unique, strong, global solutions for rough initial data of the form $u_0 = a_1 + a_2$, where $a_1$ is continuous and $a_2$ is a small $\rL^\infty$ perturbation, see \cite{GGHHK:20b}. 
Regardless of these efforts, the uniqueness problem for weak solutions to the primitive equations remains unsolved.  

Our approach to the CAO-system can be described as follows: We first develop an optimal data result  for the linearized CAO-problem  within the  $\rL^p$-$\rL^q$-framework, which is of 
independent interest. The key observation here is the fact that the linearized problem admits a unique, strong solution in the  $\rL^p$-$\rL^q$-setting if and only if the boundary terms belongs to 
certain vector-valued Triebel-Lizorkin spaces, more precisely to $\rF^s_{pq}$, where $s=1-\frac{1}{2q}$ for the Dirichlet part and $s=\frac{1}{2}-\frac{1}{2q}$ for the Neumann part of the boundary. 
We then verify this condition by weighted paraproduct methods in these Triebel-Lizorkin spaces.  Combining this with  $\rL^p$-$\rL^q$-estimates for the nonlinearities as in  \cite{HK:16}, we obtain local well-posedness as well as a blow-up criterion. 
Let us emphasize that the existence and uniqueness of strong solutions are {\em characterized} by the 
boundary terms belonging to these Triebel-Lizorkin spaces, so there is no choice but to treat the fully nonlinear boundary terms in these spaces. 

In a second step, we deduce a-priori estimates for the fully nonlinear  problem taking  into account the nonlinear interface terms. 
In order to exploit these a-priori estimates, it is crucial that the linear theory also covers the Hilbert space case. 
To the best of our knowledge, this can only be achieved by means of an optimal data approach as sketched in the previous paragraph. 
We note that the signs appearing in the boundary terms play here an 
important role. These estimates then  rule out the finite-in-time blow-up scenario, and we obtain the  global, strong well-posedness of the CAO-system. Our  approach yields in addition the global 
strong well-posedness of the CAO-system for initial data in critical Besov spaces. Compared  to existing approaches to a-priori estimates \cite{CT:07, HK:16} for a 
single system of primitive equations, the situation for CAO is seriously more involved due to fully nonlinear interface conditions. 

This article is structured as follows. In \autoref{sec:prelim}, we recall the precise formulation of the CAO-model from \cite{LTW:93}. In 
\autoref{sec:main}, we state  our main theorem on the global strong well-posedness of the CAO-problem for initial data in critical Besov spaces and discuss in addition 
regularity properties of the solutions.  \autoref{sec:linearizedproblem} analyzes the inhomogeneous hydrostatic Stokes problem, and the optimal $\rL^p$-$\rL^q$-data result is presented. 
Paraproduct techniques are used to estimate in  \autoref{sec:local+blowup} the boundary terms in Triebel-Lizorkin spaces. Combing these results, we obtain also in \autoref{sec:local+blowup} local 
well-posedness as well as a blow-up criterion. Finally, in \autoref{sec:global}, we establish $\rL^\infty_t \rH^1 \cap \rL^2_t \rH^2$-bounds, yielding a-priori estimates in the maximal regularity 
space and hence global strong well-posedness.  
  
We denote by $\nablaH v = (\partial_x v,\partial_y v)^\top$, $\divH v = \partial_x v_1 + \partial_y v_2$ and $\DeltaH = \partial_x^2 + \partial_y^2$ the horizontal gradient, the horizontal divergence and 
the horizontal Laplacian, respectively, and by $\mdiv$, $\nabla$ as well as $\Delta$ the divergence, gradient and Laplacian in either $p$- or in $z$-coordinates.

\vspace{-.2cm}
\section{Preliminaries}\label{sec:prelim}
Lions, Temam and Wang introduced in \cite{LTW:93} the CAO-system, consisting of a compressible primitive equation for the atmosphere and an incompressible primitive equation 
for the ocean which are coupled by fully nonlinear interface conditions. The system is modeled on a cylindrical domain taking the precise shape~$\Omegaair \ \dot \cup \ \Omegaocn = G \times (0,1) \  \dot \cup \ G \times (-1,0)$, where 
$G \in \{ \R^2, \T^2 \}$. The interface is given by $\Gamma_i = G \times \{ 0 \}$. In the isothermal situation, and with thermodynamic constants set equal to~$1$, the equation 
for the atmosphere takes the shape
\begin{equation}
	\left\{
	\begin{aligned}
        \dt \varrho^{\mathrm{a}} + \mdiv (\varrho^{\mathrm{a}} \uair)  &=0, &&\text{ on }\Omegaair \times (0,T), \\
		\dt  \vair - \Delta \vair + \uair \cdot \nabla \vair + \frac{1}{\varrho^{\mathrm{a}}}\nablaH \pair &= \fair, &&\text{ on } \Omegaair \times (0,T),  \\
		  \dz \pair  &= -\varrho^{\mathrm{a}} g , &&\text{ on } \Omegaair \times (0,T), \\ 
        \pair &=  \varrho^{\mathrm{a}} , &&\text{ on } \Omegaair \times (0,T),  
	\end{aligned}
	\right. 
	\label{eq:compressibleprimitive}
\end{equation}
where $\uair = (\vair, \wair) : \Omegaair \rightarrow \R^3$ denotes the velocity, $\pair : \Omegaair \rightarrow \R$ is the atmospheric pressure, $\varrho^{\mathrm{a}} : \Omegaair \rightarrow \R$ represents the 
atmospheric density, and $\fair : \Omegaair \rightarrow \R^2$ denotes an external force.

The ocean dynamics is  described by the following incompressible primitive equations
\begin{equation}
	\left\{
	\begin{aligned}
		\dt  \vocn - \Delta \vocn + \uocn \cdot \nabla \vocn + \nablaH \pi &= \focn, &&\text{ on } \Omegaocn \times (0,T),  \\
		  \dz \pi  &= - g , &&\text{ on } \Omegaocn \times (0,T), \\ 
		\mdiv \uocn &= 0, &&\text{ on } \Omegaocn \times (0,T),
	\end{aligned}
	\right. 
	\label{eq:incompressibleprimitive}
\end{equation}
where for simplicity, we assume that $\varrho^o$ is constantly ~$1$. In this case, the unknowns are the velocity of the ocean~$\uocn = (\vocn , \wocn) : \Omegaocn \rightarrow \R^3$ as well as the pressure of the ocean $\pi : \Omegaocn \rightarrow \R$. As above, $\focn : \Omegaocn \rightarrow \R^2$ denotes an external forcing term. 

The equations \eqref{eq:compressibleprimitive} and \eqref{eq:incompressibleprimitive} are coupled on the interface $\Gamma_i =G \times \{ 0 \}$ by the balance law saying that the shear stress of the 
fluid equals the horizontal component of the atmosphere force, i.\ e.,
\begin{equation*}
    c_1 \varrho^{\mathrm{a}} \dz \vatm = -c_2 \dz \vocn = c \varrho^{\mathrm{a}} (\vatm - \vocn) | \vatm - \vocn|.
\end{equation*}
This equation  is to be understood in the sense of traces. The coupled system is further supplemented by the boundary conditions
\begin{equation*} 
\begin{aligned} 
    (\partial_z \vatm)|_{G \times \{1 \}} &= 0 &&\text{ and } \quad \wair|_{G \times \{ 0\} \cup G \times \{ 1 \}} = 0 , \\ 
    (\partial_z \vocn)|_{G \times \{-1 \}} &= 0 &&\text{ and } \quad  \wocn|_{G \times \{ -1\} \cup G \times \{ 0 \}} = 0 .
    \end{aligned} 
    %\label{eq:bc1}
\end{equation*}
In order to deal with the compressible equation, the atmospheric pressure $\pair $ is introduced as a new independent variable. The transformation reads as
\begin{equation*}
    (t,x,y,z) \rightarrow (t,x,y,p(t,x,y,z)), \text{ with inverse } (t,x,y,p) \rightarrow (t,x,y,z(t,x,y,p)).
\end{equation*}
We observe that in this coordinate system, the domain is a moving domain and no longer a fixed  one. 
In fact,  the transformed domain $\Omegaair(t)$ is given by
\begin{equation*}
    \Omegaair(t) := \left \{ (x,y,p) : (x,y) \in G, \ p \in ( \pair_s(t) \mathrm{e}^{-g}, \pair_s(t)) \right \},
\end{equation*}
where $\pair_s(t)$ denotes the pressure of the atmosphere at the surface depending on time and the horizontal coordinates. Introducing the geopotential $\Tilde{\Phi}$ and the material derivative of the pressure $\omega$ as new quantities, i.\ e., setting
\begin{equation*}
    \Tilde{\Phi} (t,x,y,p) := g z(t,x,y,p) \quad \text{ and } \quad \omega := \dt \pair + \uair \cdot \nabla \vair, 
\end{equation*}
we obtain the equations for the atmosphere of the shape
\begin{equation}
	\left\{
	\begin{aligned}
		\dt  \vair  - L\vair  + \vair \cdot \nablaH \vair + \omega \partial_p \vair + \nablaH \Tilde{\Phi} &= \fair, &&\text{ on } \Omegaair(t) \times (0,T),  \\
		  \partial_p \Tilde{\Phi}   &= -\frac{1}{p} , &&\text{ on } \Omegaair(t) \times (0,T), \\ 
        \divH \vair + \partial_p \omega &= 0, &&\text{ on } \Omegaair(t) \times (0,T), 
	\end{aligned}
	\right. 
	\label{eq:compressibleprimitivepcoordinates}
\end{equation}
where $L$ denotes the transformed Laplacian made precise below. The boundary conditions transform to
\begin{equation*} 
\begin{aligned} 
    (\pair_s)^2(\partial_p \vatm)|_{G \times \{\pair_s \}} &=c\pair_s (\vatm - \vocn) | \vatm - \vocn| &&\text{ and } \quad \omega|_{G \times \{ \pair_s\}} =  \dt \pair_s + \vair \nablaH \pair_s , \\ 
    (\partial_p \vatm)|_{G \times \{ \pair_s \mathrm{e}^{-g}\}} &= 0 ,&&\text{ and } \quad  \omega|_{G \times \{ \pair_s \mathrm{e}^{-g} \} } =   \mathrm{e}^{-g} \left (\dt \pair_s + \vair \nablaH \pair_s \right ) .
    \end{aligned} 
\end{equation*}
From \eqref{eq:compressibleprimitivepcoordinates}$_2$ we deduce that $\Tilde{\Phi}(t,x,y,p) =\log(\pair_s) -\log(p)$.
The equation \eqref{eq:compressibleprimitivepcoordinates} thus simplifies to
\begin{equation*}
	\left\{
	\begin{aligned}
		\dt  \vair  - L\vair  + \vair \cdot \nablaH \vair + \omega \partial_p \vair + \nablaH (\log \pair_s )&= \fair, &&\text{ on } \Omegaair(t) \times (0,T),  \\
        \divH \vair + \partial_p \omega &= 0, &&\text{ on } \Omegaair(t) \times (0,T), 
	\end{aligned}
	\right. 
\end{equation*}
and the operator $L$ is given by
\begin{equation*}
 L\vair:= \DeltaH \vair + \partial_p ((pg)^2 \partial_p \vair) - 2 \frac{p}{\pair_s} \nablaH \partial_p \vair \nablaH \pair_s - \Big(\frac{p |\nablaH \pair_s |}{\pair_s} \Big)^2 \partial_p^2 \vair  
              + p \partial_p \vair \DeltaH \pair_s -2p\Big( \frac{|\nablaH \pair_s |}{\pair_s} \Big)^2 \partial_p\vair. 
\end{equation*}
Lions, Temam and Wang \cite{LTW:93} assumed the atmospheric pressure on the interface to be constant, meaning that $\pair_s \equiv p_s >0$. This assumption has important consequences. Indeed, 
the moving domain $\Omegaair(t)$ is changed  to a fixed domain $\Omegaair = \{ (x,y,p) : (x,y) \in G, \ p \in (p_s \mathrm{e}^{-g}, p_s) \}$, and the transformed Laplacian $L$ simplifies to the operator 
$\DeltaH + \partial_p ((pg)^2 \partial_p)$. Also, they presumed that the geopotential $\tilde{\Phi}$ is adjusted by an averaged function, depending only on the first two independent variables, 
such that the geopotential at the interface  recovers the topography of the earth. We will denote the new geopotential by $\Phi$. For a detailed physical discussion of those assumptions, we 
refer to \cite{LTW:92a} and \cite{LTW:93}. 
The system under consideration hence is 
    \begin{equation}
	\left\{
	\begin{aligned}
		\dt \vair  + \uair \cdot \nabla \vair + \nablaH \Phi &= f^{\air}+ \DeltaH \vair + \partial_p \left (p^2 \partial_p \vair\right ), &&\text{ on } \Omegaair \times (0,T),  \\
		  \partial_p \Phi  &= -\frac{1}{p} , &&\text{ on } \Omegaair \times (0,T), \\ 
		\mdiv \uatm &= 0, &&\text{ on } \Omegaair \times (0,T),\\
        \dt \vocn  + \uocn \cdot \nabla \vocn + \nablaH \pi &= f^{\ocn}+ \Delta \vocn, &&\text{ on } \Omegaocn \times (0,T), \\
        \dz \pi &= -1, &&\text{ on } \Omegaocn \times (0,T), \\
        \mdiv \uocn &= 0, &&\text{ on } \Omegaocn \times (0,T), 
	\end{aligned}
	\right. 
	\label{eq:CAO}
	\tag{CAO} 
\end{equation}
where we assume $g$ to be $1$. In the sequel, $\Gammaairu$, $\Gammaairb$ and $\Gau$, $\Gab$ denote the upper and lower boundaries of the atmosphere and ocean domains, respectively. 
Note that the upper boundary $\Gammaairu = G \times \{ p_s \}$ corresponds to the upper boundary of the ocean $\Gau = G \times \{ 0 \}$. 
The boundary conditions of \eqref{eq:CAO} then read as
\begin{equation} 
\begin{aligned} 
    (\partial_p \vatm)|_{\Gammaairb} &= 0, \quad &&(\partial_p \vatm)|_{\Gammaairu} = -p_s^{-1} V|V| &&&\text{and } \quad \omega|_{\Gammaairb \cup \Gammaairu} = 0 , \\ 
    (\partial_z \vocn)|_{\Gab} &= 0, \quad &&(\partial_z \vocn)|_{\Gammaairu} = p_s V|V|   &&&\text{and } \quad  w|_{\Gau \cup \Gab} = 0 ,
    \end{aligned} 
    \label{eq:bc}
\end{equation}
with $V$ representing the relative velocity evaluated at the interface, i.\ e.,
\begin{equation}\label{eq:V}
	V:= \vair|_{\Gammaairu}-\vocn|_{\Gau}.
\end{equation}
The system is completed by the initial data
\begin{equation}
    (\vair, \vocn)|_{t=0} = (\vair_0, \vocn_0).
    \label{eq:initial data}
\end{equation}

Next, \eqref{eq:CAO}$_2$ and \eqref{eq:CAO}$_5$ imply
\begin{equation*}
    \begin{aligned}
        \Phi(x,y,p)  = \Phi_s(x,y) + \log(p_s) -\log(p) \quad \text{ and } \quad
        \pi(x,y,z) = \pi_s(x,y) - z.
    \end{aligned}
\end{equation*}
Here we denote by $\Phi_s$ and $\pi_s$ the geopotential and oceanic pressure evaluated at the interface. Observe that $\nablaH \Phi = \nablaH \Phi_s$ and $\nablaH \pi = \nablaH \pi_s$. 
We then conclude that \eqref{eq:CAO} can be rewritten equivalently as
\begin{equation}
	\left\{
	\begin{aligned} \dt \vair+ \vair \cdot \nablaH \vair + \omega \partial_p \vair  + \nablaH \Phi_s &= \fair+\Delta^{\atm} \vair, &&\text{ on } \Omegaair \times (0,T),  \\
		\divH \vairbar &= 0, &&\text{ on } \Omegaair \times (0,T),\\
        \dt \vocn  + \vocn \cdot \nablaH \vocn + w \dz \vocn + \nablaH \pi_s &= f^{\ocn}+\Delta \vocn, &&\text{ on } \Omegaocn \times (0,T), \\
        \divH \vocnbar &= 0, &&\text{ on } \Omegaocn \times (0,T), 
	\end{aligned}
	\right. 
	\label{eq:CAOsimplified}
	\tag{CAO*}
\end{equation}
with $\Delta^{\air} := \DeltaH + \partial_p (p^2 \partial_p )$, and complemented by the boundary conditions
\begin{equation*} 
\begin{aligned} 
    (\partial_p \vatm)|_{\Gammaairb} = 0 \quad &\text{ and } \quad (\partial_p \vatm)|_{\Gammaairu} = -p_s^{-1} V|V| , \\ 
    (\partial_z \vocn)|_{\Gab} = 0 \quad &\text{ and } \quad (\partial_z \vocn)|_{\Gau} = p_s V|V|
    \end{aligned} 
\end{equation*}
as well as the initial data
\begin{equation*}
    (\vair, \vocn)|_{t=0} = (\vair_0, \vocn_0) .
\end{equation*}

Some words about our setting for solving the CAO-problem  are in order.  Let $s \in \R$ and $p,q \in (1,\infty)$. In the sequel, we denote 
by $\rW^{s,q}(\Omega)$ the fractional Sobolev spaces, 
by $\rH^{s,q}(\Omega)$ the Bessel potential spaces, by~$\rB^{s}_{qp}(\Omega)$ the Besov spaces, and by $\rF_{pq}^s(\Omega)$ the Triebel-Lizorkin spaces, where $\Omega \subset \R^3$ is open.  As usual, we set $\rH^{0,q}(\Omega) := \rL^p(\Omega)$ and note that $\rH^{s,q}(\Omega)$ coincides with the 
classical Sobolev space $\rW^{m,q}(\Omega)$ if $s=m \in \N$. For details concerning these function spaces, we refer to the monographs \cite{Tri:78, BCD:11, Ama:19}. 

 Next, we introduce time-weighted Banach space-valued spaces $\rL^p_\mu(J;E)$. For a Banach space $(E,\| \cdot \|_E)$, a time interval $J = (0,T)$, with $0 < T \le \infty$, as well as $p \in (1,\infty)$, $\mu \in (\nicefrac{1}{p},1]$ and $u \colon J \to E$, we denote by~$t^{1-\mu} u$ the function $t \mapsto t^{1-\mu} u(t)$ on $J$.
Moreover, we define
\begin{equation*}
    \rL_\mu^p(J;E) := \left\{u \colon J \to E : t^{1-\mu} u \in \rLp(J;E)\right\}.
\end{equation*}
The norm
$\| u \|_{\rL_\mu^p(J;E)} := \| t^{1-\mu} u  \|_{\rLp(J;E)} = \left(\int_J t^{p(1-\mu)} \| u(t) \|_E^p \d t\right)^{\nicefrac{1}{p}}$ renders the latter space a Banach space. 
For $k \in \mathbb{N}_0$, the associated weighted Sobolev spaces are defined by
\begin{equation*}
    \rH_\mu^{k,p}(J;E) := \bigl\{u \in \rH_\loc^{k,1}(J;E) : u^{(j)} \in \rL_\mu^p(J;E), \enspace j \in \{0,\dots,k\}\bigr\},
\end{equation*}
and these spaces are equipped with the norms
$
    \| u \|_{\rH_\mu^{k,p}(J;E)} := \bigl(\sum_{j=0}^k \| u^{(j)} \|_{\rL_\mu^p(J;E)}^p \bigr)^{\nicefrac{1}{p}}
$. 
The weighted fractional Sobolev spaces and Bessel potential spaces are then defined by interpolation.

Since vector-valued Triebel-Lizorkin spaces turn out to be  characteristic for boundary values in our approach, we give here a precise definition.  
To this end, for $\mathscr{S}(\R)$ denoting the space of Schwartz functions on $\R$, let $\varphi \in \mathscr{S}(\R)$ with $0 \leq \hat{\varphi}(\xi) \leq 1$ for $\xi \in \R$, and $\hat{\varphi}(\xi) = 1$ if $ |\xi|\leq 1$ as well as 
$ \hat{\varphi}(\xi) = 0$ for $|\xi| \geq \frac{3}{2}$. 
Then let $\hat{\varphi_0} = \hat{\varphi}$, $\hat{\varphi_1}(\xi)= \hat{\varphi}(\nicefrac{\xi}{2})- \hat{\varphi}(\xi)$ and
$    \hat{\varphi_k}(\xi) = \hat{\varphi_1}(2^{-k+1}\xi) = \hat{\varphi}(2^{-k}\xi) - \hat{\varphi}(2^{-k+1}\xi)$ for $\xi \in \R$ and $k \geq 1$.
For $\varphi$ as above and $f \in \mathscr{S}'(\R,E)$, define
 $   S_k f := \varphi_k \ast f = \mathscr{F}^{-1}(\hat{\varphi_k}\hat{f})$,
which belongs to $\rC^{\infty}(\R;E) \cap \mathscr{S}'(\R;E)$. It follows that $\sum_{k \geq 0} S_k f =f$ in the sense of distributions since $\sum_{k\geq 0} \hat{\varphi_k}(\xi)=1$ for all $\xi \in \R$. 
If  $E$, $p$, $q$ and $s$ are as above, then the time-weighted Triebel-Lizorkin space $\rF^s_{pq,\mu}(\R,E)$ is defined by 
\begin{equation*}
    \begin{aligned}
         \rF^s_{pq,\mu}(\R,E) 
         &:= \bigl\{ f \in \mathscr{S}'(\R,E) \colon \| f \|_{\rF^s_{pq,\mu}(\R,E)} < \infty \bigr\}, \quad \text{ where}\\
         \| f \|_{\rF^s_{pq,\mu}(\R,E)} 
         &:= \big \| \left ( 2^{ks} S_k f \right )_{k \geq 0}\big \|_{\rL^p_{\mu}(\R; l^q(E))}.
    \end{aligned}
\end{equation*}
The space $\rF^s_{pq,\mu}(J,E)$ is defined via restriction.
Next, we define \emph{hydrostatically solenoidal vector fields} by
\begin{equation*}
    \rL^q_{\sigmabar}(\Omega) = \overline{\{v \in \rC^\infty(\overline{\Omega};\R^2) \colon \divH \vbar = 0 \}}^{\| \cdot \|_{\rL^q(\Omega)}}
\end{equation*}
on $\Omega \in \{ \Omegaatm, \Omegaocn \}$. Its role for the primitive equations parallels the one of the solenoidal vector fields for the Navier-Stokes equations. 
For a function $f \colon G \times (a,b) \rightarrow \R^2$,we define here the vertical average by~$\overline{f} := \frac{1}{b-a} \int_{a}^{b} \divH f(\cdot , \cdot , \xi) \d \xi$.

Last, we introduce the spaces for the initial data.
In fact, for $p, q \in (1,\infty)$ such that $\frac{1}{p} + \frac{1}{q} \leq 1$, we set
\begin{equation*}
    \rB_{qp,\sigmabar}^{\nicefrac{2}{q}}(\Omegaatm) \coloneqq \rB_{qp}^{\nicefrac{2}{q}}(\Omegaatm;\R^2) \cap \rL_{\sigmabar}^q(\Omegaatm) \quad \text{ and } \quad \rB_{qp,\sigmabar}^{\nicefrac{2}{q}}(\Omegaocn) \coloneqq \rB_{qp}^{\nicefrac{2}{q}}(\Omegaocn;\R^2) \cap \rL_{\sigmabar}^q(\Omegaocn).
\end{equation*}

\section{Main result}\label{sec:main}

We are now in the position to formulate our main result on the global, strong well-posedness of \eqref{eq:CAO} for arbitrarily large data in critical Besov spaces.

\begin{thm}[Global strong well-posedness for the CAO-system]\label{thm:globalwellposed} \mbox{} \\
Let $1 < q \leq p < \infty$ such that $\mu_c = \frac{1}{p} + \frac{1}{q} \leq 1$ and $T > 0$. Assume that $(\vair_0, \vocn_0) \in \rB_{qp,\sigmabar}^{\nicefrac{2}{q}}(\Omegaatm) \times 
\rB_{qp,\sigmabar}^{\nicefrac{2}{q}}(\Omegaocn)$ as well as $(\fair, \focn) \in \rLp_{\mu_c}(0,T;\rLq(\Omegaair;\R^2)) \times \rLp_{\mu_c}(0,T;\rLq(\Omegaocn;\R^2))$. 

Then there exists a unique, strong solution $(\vair,\omegaair,\Phi)$, $(\vocn, \wocn, \pi)$ to \eqref{eq:CAO}, subject to \eqref{eq:bc} and \eqref{eq:initial data}, satisfying
\begin{equation*}
		\begin{aligned}
			\vair &\in 
            \rH^{1,p}_{\mu_c}(0,T;\rLq(\Omegaair;\R^2)) \cap \rLp_{\mu_c}(0,T;\rH^{2,q}(\Omegaair;\R^2)) \cap \rC([0,T];\rB_{qp,\sigmabar}^{\nicefrac{2}{q}}(\Omegaatm)), \\
            \vocn &\in 
            \rH^{1,p}_{\mu_c}(0,T;\rLq(\Omegaocn;\R^2)) \cap \rLp_{\mu_c}(0,T;\rH^{2,q}(\Omegaocn;\R^2)) \cap \rC([0,T];\rB_{qp,\sigmabar}^{\nicefrac{2}{q}}(\Omegaocn)).
	   \end{aligned}
\end{equation*}
\end{thm}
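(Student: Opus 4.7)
The plan is to combine maximal $\rL^p$-$\rL^q$ regularity for the linearized hydrostatic Stokes problems on $\Omegaatm$ and $\Omegaocn$ with paraproduct estimates for the nonlinear interface term, yielding local strong well-posedness together with a blow-up criterion, and then to rule out finite-time blow-up via an a-priori bound at the $\rL^\infty_t\rH^1 \cap \rL^2_t\rH^2$ level. First I would linearize \eqref{eq:CAOsimplified} by freezing the convective terms and the quadratic interface nonlinearity $V|V|$ as external data. The two equations then decouple into an atmospheric and an oceanic inhomogeneous hydrostatic Stokes problem, coupled only through Neumann data prescribed on $\Gai$. By the optimal data result for the linearized problem (proved in Section~\ref{sec:linearizedproblem}), such a system is solvable in $\rH_{\mu_c}^{1,p}(0,T;\rL^q) \cap \rL_{\mu_c}^p(0,T;\rH^{2,q})$ if and only if the Neumann data lie in the time-weighted vector-valued Triebel--Lizorkin space $\rF_{pq,\mu_c}^{\nicefrac{1}{2}-\nicefrac{1}{2q}}(0,T;\rL^q(\Gai;\R^2))$, with continuous dependence on the data.

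The main obstacle is controlling the fully nonlinear boundary term $V|V|$ in precisely this Triebel--Lizorkin norm. For this I would invoke the trace of the maximal regularity space: by standard trace theory, if $\vair \in \rH_{\mu_c}^{1,p}(0,T;\rL^q(\Omegaatm)) \cap \rL_{\mu_c}^p(0,T;\rH^{2,q}(\Omegaatm))$, then $\vair|_{\Gammaairu}$ lies in $\rF_{pq,\mu_c}^{1-\nicefrac{1}{2q}}(0,T;\rL^q(\Gai;\R^2))$ (with an analogous statement for $\vocn$), hence $V = \vair|_{\Gammaairu} - \vocn|_{\Gau}$ does too. The quadratic map $V \mapsto V|V|$ is then estimated in the lower-order Triebel--Lizorkin space $\rF_{pq,\mu_c}^{\nicefrac{1}{2}-\nicefrac{1}{2q}}$ by a Bony-type paraproduct decomposition together with Sobolev embeddings in the spatial variables; the gap of $\nicefrac{1}{2}$ derivative is enough to absorb the derivative of the modulus $|V|$ and deliver a locally Lipschitz nonlinearity on balls in the trace space. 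Here the critical constraint $\mu_c = \nicefrac{1}{p}+\nicefrac{1}{q} \leq 1$ is what permits initial data in $\rB_{qp,\sigmabar}^{\nicefrac{2}{q}}$, since this is exactly the trace space of the maximal regularity class at $t=0$ for this weight.

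Combining this boundary estimate with standard $\rL^p$-$\rL^q$ bounds on the interior nonlinearities $\vair \cdot \nablaH \vair + \omega\partial_p \vair$ and $\vocn \cdot \nablaH \vocn + w\dz \vocn$ in the spirit of \cite{HK:16}, a contraction mapping argument on a small time interval in the weighted maximal regularity space produces a unique local solution. A standard extension argument then yields a maximal existence time $T_{\max}$ together with a blow-up criterion: either $T_{\max} = T$, or the norm of $(\vair,\vocn)$ in the trace space blows up as $t \uparrow T_{\max}$.

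The last step, which is the technical heart of the global result, is to prevent finite-time blow-up by establishing an $\rL^\infty(0,T;\rH^1) \cap \rL^2(0,T;\rH^2)$ a-priori bound. Testing the atmospheric equation against $-\Delta^\atm \vair$ and the oceanic equation against $-\Delta \vocn$, then integrating by parts, generates interface contributions on $\Gai$ of the form $\int_G c_1 \varrho^\atm \partial_p \vair \cdot (\dt\vair + \dots)$, which by the balance law \eqref{eq:coupling} couple the two estimates. The crucial point is that the signs in \eqref{eq:coupling} conspire to produce a dissipative term $\int_G |V|^3 \,\d\xH \geq 0$ on the interface, exactly as in the weak energy estimate of Lions--Temam--Wang; combining this with the standard Cao--Titi splitting of $\vair,\vocn$ into barotropic and baroclinic components yields the $\rH^1$-bound for large data. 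Crucially, the linear theory of Section~\ref{sec:linearizedproblem} also covers the Hilbert case $p=q=2$, so this bound sits in the maximal regularity class and by the blow-up criterion forces $T_{\max} = T$, proving global strong well-posedness and preserving the critical Besov regularity of the initial data.
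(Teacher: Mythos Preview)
Your overall strategy matches the paper's approach closely: optimal data theory for the linearized hydrostatic Stokes problem with Neumann data in the Triebel--Lizorkin scale, paraproduct control of $V|V|$ in $\F_{\rN,\mu}$, a contraction argument for local well-posedness, a blow-up criterion, and a-priori bounds exploiting the sign structure of the interface term. The ingredients you name are the right ones.

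There is, however, a genuine gap in the final step. The a-priori estimate you describe is an $\rL^\infty_t\rH^1 \cap \rL^2_t\rH^2$ bound, hence lives in the $p=q=2$ framework. Your last sentence then asserts that this ``forces $T_{\max}=T$, proving global strong well-posedness and preserving the critical Besov regularity of the initial data,'' which tacitly jumps from $p=q=2$ to the full range $1<q\le p<\infty$ with $\nicefrac{1}{p}+\nicefrac{1}{q}\le 1$. This transfer is not automatic: the blow-up criterion is phrased in the $(p,q)$-scale (in the paper's form, $(\vair,\vocn)\notin \rL^p((0,t_+);\rX_{\mu_c}^\atm\times\rX_{\mu_c}^\ocn)$), and the Hilbert-space a-priori bound does not by itself control that norm for general $(p,q)$. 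The paper closes this gap by a bootstrapping argument: first establish global existence for $p=q=2$; then for $p\ge 2$, $q=2$, use the local $(p,2)$-solution, observe via the time-smoothing embedding $\E_{1,\mu_c,T'}\hookrightarrow \rC((\delta,T'];\rH^1_{\sigmabar})$ that at any positive time the solution provides admissible $\rH^1$ initial data for the $(2,2)$-framework, and combine the resulting global $(2,2)$-solution with the embedding $\E_{1,1,T}\hookrightarrow \rL^p(0,T;\rH^{1+\nicefrac{2}{p},2})$ to verify the $(p,2)$-blow-up criterion; then pass to $2\le q\le p$ via $\rL^p_{\mu_c}(0,T;\rH^2)\hookrightarrow \rL^p(0,T;\rX_{\mu_c})$; finally handle $q\in(1,2)$ by the trivial inclusion $\rH^2\subset \rH^{2,q}$. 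Without this chain of embeddings your argument proves the theorem only for $p=q=2$.

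A minor point: your blow-up criterion (``the norm in the trace space blows up'') is not the one the paper uses and would need a separate justification; the paper's criterion in terms of $\rL^p((0,t_+);\rX_{\mu_c})$ is what meshes with the embeddings above.
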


\begin{rem}{\rm 
(i) An analogous result for the simplified problem concerning linear interface conditions, so $\partial_p \vair = p_s^{-1} \cdot (\vocn-\vatm)$ and 
$\dz \vocn = p_s \cdot (\vatm-\vocn)$ on $\Gammaairb \cong \Gau$, can be obtained  more easily.  The estimates for  the boundary terms are then much less involved and can be deduced 
in a more straightforward way.\\
(ii) The assertion of \autoref{thm:globalwellposed} remains valid when considering general time weights $\mu \in [\nicefrac{1}{p} + \nicefrac{1}{q},1]$. In that case, the space for the initial data is given 
by $\rB_{qp,\mathrm{N},\sigmabar}^{2(\mu-\nicefrac{1}{p})}(\Omegaatm) \times \rB_{qp,\mathrm{N},\sigmabar}^{2(\mu-\nicefrac{1}{p})}(\Omegaocn)$, where the subscript $_\mathrm{N}$ indicates Neumann boundary conditions on the 
upper and lower boundary, respectively, if $\mu > \frac{1}{2} + \frac{1}{p} + \frac{1}{2q}$. The initial data then also need to satisfy the nonlinear coupling conditions on the interface 
as in \eqref{eq:bc}.
}
\end{rem}

Our  next result discusses regularity properties of the solution (in time and space) under stronger regularity assumptions on the data. Interestingly enough, it seems that only one additional time and spatial tangential derivative for the solution at the interface $\Gamma$ can be gained when presuming additional time and spatial regularity of the forces. This is 
 due to the fact that the interface  conditions are only once  Fr{\'e}chet-differentiable, but not twice.  

\begin{cor}[Regularity]\label{cor:regularity} \mbox{} \\
Let $p,q,\mu$, $v^{\atm}_0,v^{\ocn}_0$ and $f^\atm,f^\ocn$ be as in \autoref{thm:globalwellposed}.
Furthermore, let $(\vair, \vocn)$ be the unique, global, strong solution to \eqref{eq:CAO} given in  \autoref{thm:globalwellposed}.
\begin{enumerate}[(i)]
\item {\em Interior regularity:} Assume $q > \frac{5}{2}$,
$ \fair \in  \rC^k((0,T) \times \Omegaair)$ and $\focn \in \rC^k((0,T) \times \Omegaocn)$
for $k \in \N_0 \cup \{\infty\}$. Then there exists $\alpha \in (0,1)$, with $\rC^{\infty,\alpha}$ being identified with $\rC^\infty$, such that
\begin{equation*}
    \vair \in \rC^{k,\alpha}((0,T) \times \Omegaatm) \quad \text{ and } \quad 
        \vocn \in \rC^{k,\alpha}((0,T) \times \Omegaocn) .
\end{equation*}  
\item {\em Regularity at the Interface I:} 
Assume $\frac{2}{p}+\frac{3}{q} < 1$, $\fair \in  \rC^1((0,T) \times \overline{\Omegaair})$ and $\focn \in \rC^1((0,T) \times \overline{\Omegaocn})$.
Then 
\begin{equation*}
\vair, \, \nablaH \vair \in \rC^{1,\alpha}((0,T) \times \Gamma) \quad \text{ and } \quad 
        \vocn, \, \nablaH \vocn \in \rC^{1,\alpha}((0,T) \times \Gamma) \quad \mbox{ for } \quad \alpha \in (0, \nicefrac{1}{3}-\nicefrac{2}{3p}-\nicefrac{1}{q}).
\end{equation*} 
\item {\em Regularity at the Interface II:} 
Assume $\frac{2}{p}+\frac{3}{q} < 2$, $\fair \in  \rC^1((0,T) \times \overline{\Omegaair})$ and  $\focn \in \rC^1((0,T) \times \overline{\Omegaocn})$. Then 
\begin{equation*}
    \begin{aligned}
\vair \in \rC^{1,\alpha}((0,T) \times \Gamma) \quad \text{ and } \quad \vocn \in \rC^{1,\alpha}((0,T) \times \Gamma) \quad \mbox{ for } \quad \alpha 
\in (0, \nicefrac{2}{3}-\nicefrac{2}{3p}-\nicefrac{1}{q}).
 \end{aligned}
\end{equation*}
\end{enumerate}
\end{cor}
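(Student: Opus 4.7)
The strategy is to bootstrap the maximal regularity of \autoref{thm:globalwellposed} using the linearized $\rL^p$-$\rL^q$ theory from \autoref{sec:linearizedproblem}, applied either to localized versions of the equation (for interior regularity) or to equations obtained by differentiating in time and along tangential directions of $\Gamma$ (for interface regularity). The common structural constraint is that the interface nonlinearity $V \mapsto V|V|$ is Fréchet-$\rC^1$ but not $\rC^2$, which limits the boundary bootstrap to a single differentiation and explains the asymmetry between (i) and (ii)--(iii).

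For (i), choose $\chi \in \rCcinfty(\overline{\Omegaatm})$ supported strictly inside $\Omegaatm$ away from the interface $\Gamma$, and analogously for the ocean. On $\mathrm{supp}(\chi)$ the nonlinear interface condition does not appear, so $\chi \vair$ solves a parabolic system with smooth forcing and purely Neumann-type boundary conditions. The hypothesis $q > \nicefrac{5}{2}$ gives $\rH^{2,q}(\Omegaatm) \hookrightarrow \rC^{0,\alpha_0}$, so \autoref{thm:globalwellposed} already provides $\vair \in \rC^{0,\alpha_0}$ on $\mathrm{supp}(\chi)$. I would then iterate: differentiate the localized equation in space and time, apply interior $\rL^p$-maximal regularity (or Schauder estimates) to the resulting linear parabolic system with $\rC^k$ forcing, and use Sobolev-Hölder embedding to gain one Hölder order per step. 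The case $k = \infty$, together with the analyticity mentioned in the introduction, follows by a Komatsu-type argument for parabolic systems with real-analytic coefficients.

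For (ii) and (iii), set $z := \partial_\tau \vair$ with $\partial_\tau \in \{\dt, \partial_x, \partial_y\}$, and analogously for the ocean. Differentiating the interface condition of \eqref{eq:CAOsimplified} yields
\begin{equation*}
    (\partial_p z)|_{\Gammaairu} = - p_s^{-1} \bigl( |V|\, \Id + V V^{\top}/|V| \bigr) \partial_\tau V,
\end{equation*}
which is well-defined precisely because $V \mapsto V|V|$ is $\rC^1$-Fréchet. Treating the lower-order commutator terms as known source via \autoref{thm:globalwellposed}, we obtain a linear inhomogeneous hydrostatic Stokes problem. The paraproduct estimates from \autoref{sec:local+blowup} apply to $\partial_\tau(V|V|)$ just as they did to $V|V|$ itself (only one derivative of the nonlinearity enters), placing the boundary datum in the Triebel-Lizorkin class required by the linearized theory. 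The $\rL^p$-$\rL^q$ maximal regularity then yields $z \in \rH^{1,p}_{\mu_c}(0,T;\rL^q) \cap \rL^p_{\mu_c}(0,T;\rH^{2,q})$. Finally, the trace-type Besov embedding on $(0,T) \times \Gamma$ gives Hölder regularity of $z$, with exponent $\alpha < \tfrac{2}{3} - \tfrac{2}{3p} - \tfrac{1}{q}$ under the hypothesis of (iii) and $\alpha < \tfrac{1}{3} - \tfrac{2}{3p} - \tfrac{1}{q}$ under the stronger hypothesis of (ii), which in addition tolerates a second tangential differentiation and thus the $\rC^{1,\alpha}$-regularity of $\nablaH \vair$.

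\textbf{Main obstacle.} The critical technical point is the sharp Triebel-Lizorkin estimate on $\partial_\tau(V|V|)$ and its matching with the trace embedding thresholds so that the Hölder indices come out exactly as stated. Because $V \mapsto V|V|$ is only $\rC^1$, a second differentiation would produce $|V|^{-1}$-type singularities that fall outside the paraproduct framework of \autoref{sec:local+blowup}; hence the bootstrap at the interface genuinely stops after one step, in sharp contrast to the arbitrarily high interior smoothness obtained in (i). Coordinating this one-derivative ceiling with the time weight $\mu_c$ and the two different Besov embedding regimes will be the most delicate aspect of the argument.
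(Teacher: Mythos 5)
Your proposal takes a genuinely different route from the paper, which relies throughout on Angenent's parameter trick rather than localization plus direct differentiation. The paper introduces a family of tangential/temporal shifts $\tau^{\atm}_{\lambda,\xi}, \tau^{\ocn}_{\lambda,\xi}$ (supported near the interface, acting only tangentially), forms the push-forward operator $T_{\lambda,\xi}$, and sets up a map $\mathrm{H}(\lambda,\xi,\vatm,\vocn) = T_{\lambda,\xi}\tilde{\mathrm{H}}(T_{\lambda,\xi}^{-1}(\vatm,\vocn)^\top)$ that is $\rC^1$ in all arguments. The implicit function theorem then produces a $\rC^1$ family $\Phi(\lambda,\xi)$ of translated solutions in $\E_{1,\mu_c,T}$, whose parameter derivatives encode $\dt v$ and the tangential derivatives $\nablaH v$ with full maximal regularity; the Hölder exponents in (ii)–(iii) then fall out of the trace and mixed-derivative embeddings into $\rC^\alpha(0,T;\rC^{1,\alpha}(\Gamma))$ under the condition $3\alpha + \nicefrac{2}{p} + \nicefrac{3}{q} < 1$ (resp. a weaker target space for (iii)). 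For part (i) the paper simply cites \cite{GGHHK:20}, which also uses the parameter trick, now with shifts in all spatial directions since one is away from the boundary.

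The main gap in your approach lies in the direct-differentiation step for (ii)–(iii). You write $z := \partial_\tau \vair$ and assert that $z$ solves a linear inhomogeneous hydrostatic Stokes problem to which the optimal-data theory applies. But the existence of $\partial_\tau v$ in the maximal regularity class $\E_{1,\mu_c,T}$ — as opposed to the weaker regularity provided a priori by \autoref{thm:globalwellposed} — is precisely what has to be proved, and this requires either a careful difference-quotient argument (which would need to be carried through the sharp Triebel-Lizorkin framework for the boundary data) or the implicit function theorem machinery the paper uses. The parameter trick is not merely a stylistic choice: it converts the bootstrap into a question about $\rC^1$-differentiability of the solution map, where the only input needed is that $\mathrm{H}$ is $\rC^1$ and $D_{(\vatm,\vocn)}\mathrm{H}$ is an isomorphism (which is \autoref{thm:optdata}). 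Your structural observation that the interface nonlinearity $V \mapsto V|V|$ is $\rC^1$-Fréchet but not $\rC^2$ — so the boundary bootstrap genuinely terminates after one step — is exactly right and mirrors why the paper's map $\mathrm{H}$ is only $\rC^1$. A second issue in part (i): localizing the primitive equations is nontrivial since the surface pressure and the vertical velocity are nonlocal functionals of $v$; cutting off with $\chi$ produces commutator terms with these nonlocal objects that you would have to control, whereas the parameter-trick argument in \cite{GGHHK:20} avoids this entirely.
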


The next corollary exhibits the real analyticity of the solution in the interior with absent outer forces. 

\begin{cor}[Analyticity of the solution]\label{cor:regularitywithout} \mbox{} \\
Let $p,q,\mu,\alpha$, $v^{\atm}_0,v^{\ocn}_0$ be as in \autoref{cor:regularity}(i), and assume $f^\atm=f^\ocn=0$.  
In addition, let $(\vair, \vocn)$ be the unique, global, strong solution to \eqref{eq:CAO} given in  \autoref{thm:globalwellposed}.
Then $\vair$ and $\vocn$ are real analytic, i.\ e., 
\begin{equation*}
    \vair \in \rC^{\omega}((0,T) \times \Omegaatm) \quad \text{ and } \quad \vocn \in \rC^{\omega}((0,T) \times \Omegaocn). 
\end{equation*}
\end{cor}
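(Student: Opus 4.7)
The plan is to apply the parameter trick of Masuda, as developed for quasilinear parabolic problems by Escher-Pr\"uss-Simonett, in combination with the analytic implicit function theorem. Since $f^\atm = f^\ocn = 0$, the system \eqref{eq:CAOsimplified} is autonomous, and \autoref{cor:regularity}(i) already provides $v^\atm \in \rC^\infty((0,T) \times \Omegaatm)$ and $v^\ocn \in \rC^\infty((0,T) \times \Omegaocn)$, so it remains to upgrade interior smoothness to joint real analyticity on arbitrary compact subsets.

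Fix $(t_*, x_*) \in (0,T) \times \Omegaatm$ (the ocean case is handled identically). Choose $r > 0$ with $\overline{B_{2r}(x_*)} \Subset \Omegaatm$ and $[t_* - 2r, t_* + 2r] \subset (0,T)$, and pick a cutoff $\chi \in \rCcinfty(\R^3;\R^3)$ equal to $x - x_*$ on $B_{r/2}(x_*)$ and supported in $B_r(x_*)$. For complex parameters $(\lambda,\xi) \in \C \times \C^3$ near $(1,0)$, define
\begin{equation*}
\Phi_\xi(x) := x + \chi(x)\,\xi, \qquad v_{\lambda,\xi}^\atm(t,x) := v^\atm\!\bigl(\lambda t, \Phi_\xi(x)\bigr),
\end{equation*}
and analogously $v_{\lambda,\xi}^\ocn$, with associated pressure and vertical velocity reconstructed via the (transformed) hydrostatic constraints. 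Since $\Phi_\xi$ is the identity outside $B_r(x_*)$, all nonlocal hydrostatic quantities transform consistently, while inside $B_r(x_*)$ the change of variables introduces only polynomial dependence on $\xi$, $D\Phi_\xi$ and $(D\Phi_\xi)^{-1}$. Consequently the pair $(v_{\lambda,\xi}^\atm, v_{\lambda,\xi}^\ocn)$ solves a perturbed version of \eqref{eq:CAOsimplified} whose right-hand side, as a map into the maximal regularity space of \autoref{thm:globalwellposed}, depends holomorphically on $(\lambda,\xi)$ in a small complex neighborhood of $(1,0)$.

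By the $\rLp$-$\rLq$-maximal regularity of the hydrostatic Stokes operator established in \autoref{sec:linearizedproblem}, the Fr\'echet derivative of the associated nonlinear map at $(\lambda,\xi) = (1,0)$ is a topological isomorphism. The analytic implicit function theorem then produces a holomorphic branch $(\lambda,\xi) \mapsto (v_{\lambda,\xi}^\atm, v_{\lambda,\xi}^\ocn)$ which coincides with the original solution for real $(\lambda,\xi)$ near $(1,0)$. Cauchy estimates at this base point bound the derivatives $\partial_t^k \partial_x^\alpha v^\atm(t_*, x_*)$ with Cauchy-Kowalevski type growth in $(k,|\alpha|)$, establishing real analyticity at $(t_*,x_*)$, and a standard covering argument extends this to any compact subset of $(0,T) \times \Omegaatm$; the same reasoning applies to $v^\ocn$. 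The principal obstacle is verifying that the pulled back system is a genuine holomorphic perturbation of \eqref{eq:CAOsimplified} -- particularly that the nonlocal pressure and the vertical velocities $\omegaair,\wocn$ transform consistently -- but this is tractable precisely because the cutoff $\chi$ confines all distortions to the interior ball $B_r(x_*)$, and the hydrostatic primitive structure is preserved under smooth local diffeomorphisms.
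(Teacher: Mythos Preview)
Your proposal is correct and follows essentially the same approach as the paper. The paper does not give a detailed proof of \autoref{cor:regularitywithout} at all; it simply notes in \autoref{sec:regularity} that interior analyticity is unaffected by the nonlinear boundary conditions and hence follows from the arguments in \cite{GGHHK:20}, which is precisely the Angenent--Masuda parameter trick combined with the analytic implicit function theorem that you outline.
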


\section{The linearized problem and optimal $\rL^p$ -$\rL^q$ theory}\label{sec:linearizedproblem}

In this section, we develop an optimal data result  for the linearized CAO-problem  within the  $\rL^p$-$\rL^q$-framework, which is of independent interest. 
The result serves as the starting point of the proof of the local well-posedness of the CAO-system. It is optimal in the sense that the assumptions are sufficient and necessary 
for solving the associated inhomogeneous linear problem given by
\begin{equation}
	\left\{
	\begin{aligned}
		\dt v - \Delta v + \lambda v + \nablaH \pi &= f_v, &&\on \Omega \times (0,T), \\
		\partial_z \pi &= f_w, &&\on \Omega \times (0,T), \\
		\mdiv u &= f_{\mdiv}, &&\on \Omega \times (0,T), \\
		v(0) &= v_0, &&\on \Omega \times (0,T), \\
		v &= b_v^{\D}, &&\text{ on } \Gamma_{\D} \times (0,T), \\
		\dz v &= b_v^{\mathrm{N}}, &&\text{ on } \Gamma_{\mathrm{N}} \times (0,T), \\
		w &= b_w^{\D}, &&\text{ on } \Gamma \times (0,T),
	\end{aligned}
	\right. 
	\label{eq:hstokes1}
\end{equation}
for $0 < T < \infty$ and $u = (v,w)$ subject to  periodic boundary conditions on the lateral boundary or on an infinite layer, i.\ e., on 
$\Omega = G \times (a,b)$ with $G \in \{ \T^2, \R^2 \}$. We will use the notation $\Gamma = \Gamma_{\D} \cup \Gamma_{\rN}$ for the Dirichlet and Neumann parts of the boundary.
The following assumptions are crucial for solving the boundary value problem \eqref{eq:hstokes1} subject to inhomogeneous data.

\begin{asu}\label{assu:data}
Given $f_v$, $f_w$, $f_{\mdiv}$, $v_0$, $b_v^{\D}$, $b_v^{\rN}$ and $b_w^{\D}$, with $b_w^{\D} = b_{w,a}^{\D}$ on $G \times \{a\}$ and $b_w^{\D} = b_{w,b}^{\D}$ on~$G \times \{b\}$, assume
\begin{enumerate}[(i)]
    \item $f_v - \int_a^z \nablaH f_w \d \xi \in \rLp_\mu(0,T;\rLq(\Omega;\R^2)) \eqqcolon \E_{0,\mu}$,
    \item $\frac{b_{w,a}^{\D} - b_{w,b}^{\D}}{b-a} + \fbar_{\mdiv} \in \rH^{1,p}_\mu(0,T;\Dot{\rH}^{-1,q}(G)) \cap \rLp_\mu(0,T;\rH^{1,q}(G)) \eqqcolon \mathbb{G}_{1,\mu}$,
    \item $v_0 \in \rB_{qp}^{2(\mu-\nicefrac{1}{p})}(\Omega;\R^2) \eqqcolon \rX_{\gamma,\mu}$,
    \item $b_v^{\D} \in \rF_{pq,\mu}^{1-\nicefrac{1}{2q}}(0,T;\rLq(G;\R^2)) \cap \rLp_\mu(0,T;\rB_{qq}^{2 - \nicefrac{1}{q}}(G;\R^2)) \eqqcolon \F_{\D,\mu}$ and\\ $b_v^{\rN} \in \rF_{pq,\mu}^{\nicefrac{1}{2}-\nicefrac{1}{2q}}(0,T;\rLq(G;\R^2)) \cap \rLp_\mu(0,T;\rB_{qq}^{1 - \nicefrac{1}{q}}(G;\R^2)) \eqqcolon \F_{\rN,\mu}$, as well as $v_0 = b_v^{\D}(0)$ provided $1 - \frac{1}{2q} > \frac{1}{p} + 1 - \mu$, while $\dz v_0 = b_v^{\rN}(0)$ if $\frac{1}{2} - \frac{1}{2q} > \frac{1}{p} + 1 - \mu$, and finally
    \item $\divH \vbar_0 = \frac{b_{w,a}^{\D}(0) - b_{w,b}^{\D}(0)}{b-a} + \fbar_{\mdiv}(0)$ in $\sD'(G)$, i.\ e., in the sense of distributions.
	\end{enumerate}
\end{asu}

The main result in this section addresses the solvability of \eqref{eq:hstokes1}.

\begin{thm}\label{thm:optdata}
Let $p,q \in (1,\infty)$ and $\mu \in (\nicefrac{1}{p},1]$ be such that  $\mu \neq \frac{1}{p} + \frac{1}{2q}$ and $\mu \neq \frac{1}{2} + \frac{1}{p} + \frac{1}{2q}$.
Then there exists $\lambda \ge 0$ such that the problem \eqref{eq:hstokes1} admits a unique, strong solution $v$ satisfying
\begin{equation*}
    \begin{aligned}
        v &\in \E_{1,\mu} := \rH^{1,p}_\mu(0,T;\rLq(\Omega;\R^2)) \cap \rLp_\mu(0,T;\rH^{2,q}(\Omega;\R^2))
    \end{aligned}
\end{equation*}
if and only if the data $(f_v,f_w,f_{\mdiv},v_0,b_v^{\D}, b_v^{\rN}, b_w^{\D})$ satisfy \autoref{assu:data}.
In that case, there exists a constant~$C > 0$ such that $v$ satisfies the estimate
\begin{equation*}
    \| v \|_{\E_{1,\mu}} \le C\bigl(\bigl\| f_v - \int_a^z \nablaH f_w \d \xi \bigr\|_{\E_{0,\mu}} + \| b_{w,a}^{\D} - b_{w,b}^{\D}  +  \fbar_{\mdiv} \|_{\mathbb{G}_{1,\mu}} + \| v_0 \|_{\rX_{\gamma,\mu}} + 
\| b_v^{\D} \|_{\F_{\D,\mu}} + \| b_v^{\rN} \|_{\F_{\rN,\mu}}\bigr).
\end{equation*}
\end{thm}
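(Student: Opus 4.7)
The plan is to prove necessity and sufficiency separately, with a pressure decomposition as the structural backbone. Since $\partial_z \pi = f_w$, we may write $\pi(x,y,z) = \pi_s(x,y) + \int_a^z f_w(\cdot,\xi)\d\xi$ with $\pi_s$ depending only on the horizontal variables, so that the momentum equation takes the form $\partial_t v - \Delta v + \lambda v + \nablaH \pi_s = f_v - \int_a^z \nablaH f_w\d\xi =: \tilde f_v$. This already explains the form in Assumption~\ref{assu:data}(i) and reduces the problem to determining $v$ and $\pi_s$ jointly.

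For necessity, assume $v \in \E_{1,\mu}$ solves \eqref{eq:hstokes1}. Condition (i) follows from reading off $\tilde f_v - \nablaH \pi_s \in \rL^p_\mu(\rL^q)$. Condition (ii) comes from vertical averaging of the divergence constraint: integrating $\divH v + \dz w = f_{\mdiv}$ over $z \in (a,b)$ yields $\divH \bar v = \bar f_{\mdiv} + (b_{w,a}^{\D} - b_{w,b}^{\D})/(b-a)$, and the maximal regularity of $v$ pushes this horizontal divergence exactly into $\mathbb{G}_{1,\mu}$. The remaining conditions (iii)-(v) are the standard initial and boundary traces in weighted anisotropic Triebel-Lizorkin spaces of Weidemaier (in the time-weighted form of Meyries-Schnaubelt), together with the canonical compatibility at $t=0$.

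For sufficiency, the strategy is a three-step reduction. \emph{Step 1 (homogenisation of boundary data):} using the sharp right-inverses of the trace operators, construct an extension $v^{(1)} \in \E_{1,\mu}$ with the prescribed Dirichlet trace $b_v^{\D}$, Neumann trace $b_v^{\rN}$ and initial value $v_0$; the compatibility clauses in (iv) and (v) are precisely what makes such a simultaneous extension possible, and subtracting $v^{(1)}$ reduces the problem to homogeneous traces and initial datum, with the divergence condition still of the form in (ii). \emph{Step 2 (barotropic equation):} decompose $v = \bar v + \tilde v$ with $\int_a^b \tilde v \d z = 0$. Averaging the momentum equation vertically and using the prescribed Neumann contributions from $\dz v$ at the boundary, the barotropic component satisfies a two-dimensional Stokes-type system on $G$ with inhomogeneous divergence, solved via the classical $\rL^p$-$\rL^q$-Stokes theory of Pr{\"u}ss-Simonett; this pairs the $\dot{\rH}^{-1,q}$ time regularity of the divergence with its $\rH^{1,q}$ spatial regularity supplied by (ii), which is the reason for the precise space $\mathbb{G}_{1,\mu}$. \emph{Step 3 (baroclinic equation):} since $\nablaH \pi_s$ is barotropic, it is annihilated by the vertical mean-zero projection, so the baroclinic correction $\tilde v$ satisfies a purely parabolic mixed Dirichlet-Neumann problem on $\Omega$ with forcing in $\E_{0,\mu}$, for which $\rL^p$-$\rL^q$-maximal regularity is standard once $\lambda$ is chosen large enough to ensure invertibility.

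The main technical obstacle lies in Step 1: establishing that the trace maps $v \mapsto v|_{\Gamma_{\D}}$ and $v \mapsto \dz v|_{\Gamma_{\rN}}$ from $\E_{1,\mu}$ surject onto $\F_{\D,\mu}$ and $\F_{\rN,\mu}$ respectively, together with the construction of a common extension also hitting $v_0$. The anisotropy between $\rL^p$-time and $\rL^q$-space (with $p \neq q$) is precisely what forces the time-space regularity into the Triebel-Lizorkin scale $\rF^{s}_{pq,\mu}(\rL^q) \cap \rL^p_\mu(\rB^{2s}_{qq})$ at $s = 1 - \tfrac{1}{2q}$ and $s = \tfrac{1}{2} - \tfrac{1}{2q}$, rather than the more familiar Besov scale; any larger class would fail the extension property and any smaller one would fail surjectivity. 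The excluded thresholds $\mu = \tfrac{1}{p} + \tfrac{1}{2q}$ and $\mu = \tfrac{1}{2} + \tfrac{1}{p} + \tfrac{1}{2q}$ are exactly those at which the compatibility conditions in (iv) become ill-posed and the extension construction degenerates.
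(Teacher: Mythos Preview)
Your overall architecture---pressure reduction to a surface pressure $\pi_s$, then a barotropic/baroclinic splitting---matches the paper's. However, there is a genuine gap in your sufficiency argument: the barotropic and baroclinic equations do \emph{not} decouple as you claim. Vertically averaging $-\Delta v$ produces the boundary term $B\tilde v := \frac{1}{b-a}\bigl(\partial_z \tilde v(b) - \partial_z \tilde v(a)\bigr)$ on the right-hand side of the 2D Stokes equation for $\bar v$. On the Neumann part of the boundary this term is prescribed (and vanishes after your homogenisation), but on $\Gamma_{\D}$ the normal derivative $\partial_z \tilde v$ is \emph{not} a datum---it is part of the unknown. Conversely, the Dirichlet condition for the baroclinic part reads $\tilde v|_{\Gamma_{\D}} = -\bar v$ after your Step~1, so the baroclinic problem depends on the barotropic solution. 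The system is therefore two-way coupled whenever $\Gamma_{\D} \neq \emptyset$, and your Steps~2 and~3 cannot be executed sequentially as written.

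The paper resolves this coupling by a contraction-mapping argument rather than by homogenisation. It first proves an optimal-data result for the \emph{perturbed} heat equation $\partial_t V - \Delta V + \lambda V + BV = H$ with inhomogeneous mixed boundary data (Lemma~\ref{lem:heat equation}), handling the nonlocal perturbation $B$ via the continuity method together with a small-time absorption estimate exploiting the relative $\Delta$-boundedness of $B$. It then defines the fixed-point map
\[
R(V) = S_V\bigl(\tilde g_v,\, \tilde v_0,\, b_v^{\D} - S_U(\bar g_v + BV, g_{\mdiv}, \bar v_0),\, b_v^{\rN}\bigr),
\]
where $S_U$ and $S_V$ are the solution operators of the 2D Stokes problem and of the perturbed heat equation, respectively; the contraction again comes from the smallness of $B$ for short time, after which one iterates forward to arbitrary $T$. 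Your trace-extension Step~1 is not needed in this scheme, and the coupling through $B$ is the essential mechanism your proposal is missing.
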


\begin{rem}\label{rem:main:theorem}{\rm
(i) The vertical velocity $w$ and the pressure term $\pi$ are omitted in the above theorem. Indeed,~$w$ can be  determined by the divergence condition $\mdiv u=f_{\mdiv}$ and the boundary data 
$b_w^{\D}$, see \eqref{eq:w} below.
The pressure $\pi$ can be recovered from \eqref{eq:stokes} by applying the horizontal divergence to the equation and solving the resulting 
Poisson equation for the surface pressure. We refer here also to \eqref{eq:pressure}. \mbox{}\\
(ii) The shift by $\lambda \ge 0$ is generally indispensable if $\GaD = \emptyset$, i.\ e., for pure Neumann boundary conditions. 
}
\end{rem}

With regard to solving \eqref{eq:CAO}, the most relevant case corresponds to $f_w = f_{\mathrm{div}}= 0$, $b^{\D}_{w,a} = b^{\D}_{w,b} = 0$ and pure Neumann boundary conditions for $v$.
Hence, by \eqref{eq:hstokes1}$_2$, the pressure $\pi$ is fully determined by the surface pressure $\pi_s$.
In this particular situation, we get the following result.

\begin{cor}\label{cor:main}
Let $p,q,\mu$ be as in \autoref{thm:optdata}, and assume that $f_w = 0$ and $b^{\D}_{w,a} = b^{\D}_{w,b} = 0$.
Then there is $\lambda \ge 0$ such that \eqref{eq:hstokes1} admits a unique, strong solution $(v,\pi_s)$ with $v \in \E_{1,\mu}$ and $\pi_s \in \rL_\mu^p(0,T;\Dot{\rH}^{1,q}(G))$
if and only if $f_v \in \E_{0,\mu}$, $v_0 \in \rX_{\gamma,\mu}$ and $b_v^{\rN} \in \F_{\rN,\mu}$ fulfill \autoref{assu:data}(iv).
Moreover, there exists a constant~$C > 0$ such that $v$ fulfills the estimate
\begin{equation*}
    \| v \|_{\E_{1,\mu}} \le C\bigl(\| f_v \|_{\E_{0,\mu}} + \| v_0 \|_{\rX_{\gamma,\mu}} + \| b_v^{\rN} \|_{\F_{\rN,\mu}}\bigr).
\end{equation*}
\end{cor}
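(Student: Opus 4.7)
The plan is to reduce \autoref{cor:main} to \autoref{thm:optdata} by verifying \autoref{assu:data} under the simplified hypotheses, and then to recover the surface pressure $\pi_s$ by solving a scalar Poisson equation on $G$.

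First I would check that \autoref{assu:data} is satisfied for the data $(f_v, 0, 0, v_0, 0, b_v^{\rN}, 0)$, i.e.\ for $f_w = 0$, $f_{\mdiv} = 0$, $b_w^{\D} = 0$ and $b_v^{\D} = 0$ under the pure Neumann condition. Items~(i) and~(ii) collapse to $f_v \in \E_{0,\mu}$ and $0 \in \mathbb{G}_{1,\mu}$, which are immediate; item~(iii) is the hypothesis $v_0 \in \rX_{\gamma,\mu}$; for item~(iv), the vanishing Dirichlet datum trivially lies in $\F_{\D,\mu}$, while the Neumann datum and the compatibility $\dz v_0 = b_v^{\rN}(0)$ (when the exponents require it) are precisely what is assumed; and item~(v) reduces to $\divH \bar v_0 = 0$, which is the hydrostatic solenoidality constraint automatically imposed by combining $\mdiv u = 0$ with $w|_{\Gamma} = 0$. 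An application of \autoref{thm:optdata} then produces the unique $v \in \E_{1,\mu}$ with the announced estimate, the omitted data terms simply dropping out.

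It remains to construct $\pi_s$ and to address necessity. Since $f_w = 0$, \eqref{eq:hstokes1}$_2$ forces $\partial_z \pi = 0$, so $\pi = \pi_s(x,y)$ depends only on the horizontal variables. I would then average \eqref{eq:hstokes1}$_1$ over $z$, use the Neumann traces on $\Gamma_{\rN}$ to evaluate $\int_a^b \dz^2 v \d z$, apply $\divH$ to the averaged equation, and invoke $\divH \bar v = 0$; this yields a Poisson equation of the form
\begin{equation*}
    \DeltaH \pi_s = \divH \bar{f}_v + \frac{1}{b-a} \divH \bigl(b_v^{\rN}|_b - b_v^{\rN}|_a\bigr)
\end{equation*}
on $G$. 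Its right-hand side lies in $\rL_\mu^p(0,T; \Dot{\rH}^{-1,q}(G))$ thanks to $v \in \E_{1,\mu}$ and the trace embedding encoded in $\F_{\rN,\mu}$, so standard elliptic theory on $G \in \{\R^2, \T^2\}$ produces a unique $\pi_s \in \rL_\mu^p(0,T; \Dot{\rH}^{1,q}(G))$ modulo constants, completing sufficiency.

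Necessity follows from trace theory: from a solution $(v, \pi_s)$ in the claimed spaces, \eqref{eq:hstokes1}$_1$ reads off $f_v \in \E_{0,\mu}$, while the Neumann trace of $v \in \E_{1,\mu}$ yields $b_v^{\rN} \in \F_{\rN,\mu}$ together with the $t=0$ compatibility through the vector-valued Triebel--Lizorkin trace theorem underlying \autoref{thm:optdata}. The only mildly delicate point will be to confirm that the right-hand side of the Poisson equation lies in $\Dot{\rH}^{-1,q}(G)$ uniformly in $t$ with the correct time regularity, which is where the precise index of the Triebel--Lizorkin spaces for the Neumann data becomes important.
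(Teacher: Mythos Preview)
Your approach is essentially the paper's: the corollary is an immediate specialization of \autoref{thm:optdata} to the case $f_w = f_{\mdiv} = 0$, $b_w^{\D} = 0$, $\Gamma_{\D} = \emptyset$, with the surface pressure recovered exactly as \autoref{rem:main:theorem}(i) indicates via a Poisson equation on $G$ (equivalently, it comes for free from the 2D Stokes solver $S_U$ used in the proof of \autoref{thm:optdata}). One small imprecision: item~(v) of \autoref{assu:data}, namely $\divH \bar v_0 = 0$, is not ``automatically imposed'' by the equations on the \emph{given} datum $v_0$ --- it is a genuine compatibility condition on the initial data, tacitly assumed in the corollary and satisfied in all later applications where $v_0$ is taken in the hydrostatically solenoidal space.
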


As our  proof of \autoref{thm:optdata} relies on a splitting of \eqref{eq:hstokes1} in its barotropic and baroclinic mode, we start by investigating  the perturbed heat equation with 
inhomogeneous data given by
\begin{equation}
	\left\{
	\begin{aligned}
		\dt V - \Delta V +\lambda V + B V &= H, &&\on \Omega \times (0,T),\\
		V(0) &= V_0, &&\on \Omega, \\
		V &= B^{\D}, &&\text{ on } \Gamma_{\D} \times (0,T), \\
		\dz V &= B^{\mathrm{N}}, &&\text{ on } \Gamma_{\mathrm{N}} \times (0,T),
	\end{aligned}
	\right. 
	\label{eq:heat2}
\end{equation}
where $B v = \frac{\dz v(b)-\dz v(a)}{b-a}$.
The next lemma addresses the associated optimal $\rL^p$-$\rL^q$-regularity result.

\begin{lem}\label{lem:heat equation}
Let $p,q,\mu$ be as in \autoref{thm:optdata}. Then there exists $\lambda \ge 0$ such that system \eqref{eq:heat2} has a unique solution $V \in \E_{1,\mu}$
if and only if the data $(H,V_0,B^{\D},B^{\rN})$ fulfill $H \in \E_{0,\mu}$, $V_0 \in \rX_{\gamma,\mu}$, $B^{\D} \in \F_{\D,\mu}$, $B^{\rN} \in \F_{\rN,\mu}$ as well as 
$1 - \frac{1}{2q} > \frac{1}{p} + 1 - \mu$ and $\dz V_0 = B^{\rN}(0)$ if $\frac{1}{2} - \frac{1}{2q} > \frac{1}{p} + 1 - \mu$.
Furthermore, there exists a constant $C>0$ (depending only on $T>0$, $p$, $q$ and $\mu$) such that  
\begin{equation*}
	\| V \|_{\E_{1,\mu}} \le C \cdot \bigl( \| H \|_{\E_{0,\mu}} + \| v_0 \|_{X_{\gamma,\mu}} + \| B^{\D} \|_{\F_{\D,\mu}} + \| B^{\mathrm{N}} \|_{\F_{\rN,\mu}}\bigr).
\end{equation*}
Hence, the solution operator 
\begin{equation} 
	S_V \colon \E_{0,\mu} \times \rX_{\gamma,\mu} \times \F_{\D,\mu} \times \F_{\rN,\mu} \to \E_{1,\mu}, \quad (H,V_0,B^{\D},B^{\rN}) \mapsto V \label{eq:SV}
\end{equation}  
is an isomorphism. 
\end{lem}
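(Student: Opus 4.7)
My plan is to prove the lemma by treating necessity and sufficiency separately, with the bulk of the work on sufficiency via a reduction-to-zero-data strategy combined with maximal $\rL^p$-$\rL^q$-regularity in the time-weighted $\rL^p_\mu$-scale. The main structural feature to exploit is that the nonlocal term $BV$ depends only on boundary traces of $\dz V$ and is therefore of strictly lower order than the principal part.

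For \emph{necessity}, I would invoke anisotropic trace theory in the time-weighted setting. If $V \in \E_{1,\mu}$, the initial trace $V(0)$ lies in $\rB_{qp}^{2(\mu-\nicefrac{1}{p})}(\Omega;\R^2) = \rX_{\gamma,\mu}$ by the standard real-interpolation trace embedding, while the spatial trace $V|_{\GaD}$ and the normal derivative trace $\dz V|_{\GaN}$ lie in $\F_{\D,\mu}$ and $\F_{\rN,\mu}$, respectively — these are precisely the Triebel-Lizorkin spaces characterizing anisotropic boundary traces of solutions to parabolic problems in the $\rL^p$-$\rL^q$-setting (cf.\ Denk-Hieber-Pr\"u{\ss}, with the extension to time weights due to Lindemulder-Meyries-Veraar and Pr\"u{\ss}-Simonett). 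Under the threshold condition $1-\nicefrac{1}{2q} > \nicefrac{1}{p} + 1 - \mu$, the Dirichlet time trace exists and forces $V_0 = B^{\D}(0)$ at $t=0$, and analogously for the Neumann compatibility. Finally, $BV \in \E_{0,\mu}$ is automatic since $\dz V|_\Gamma$ lies in an even better space than required, so $H = (\dt - \Delta + \lambda + B)V \in \E_{0,\mu}$.

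For \emph{sufficiency}, I would proceed in two steps. In the first step, I construct a lifting $V_1 \in \E_{1,\mu}$ realizing the prescribed data $(V_0, B^{\D}, B^{\rN})$ by means of a right inverse to the joint initial-and-boundary trace operator; this is possible exactly under \autoref{assu:data}, with norm estimate
\begin{equation*}
\| V_1 \|_{\E_{1,\mu}} \le C \bigl(\| V_0 \|_{\rX_{\gamma,\mu}} + \| B^{\D} \|_{\F_{\D,\mu}} + \| B^{\rN} \|_{\F_{\rN,\mu}}\bigr).
\end{equation*}
Setting $V_2 := V - V_1$ reduces the problem to zero initial and boundary data with modified right-hand side $H_2 := H - (\dt - \Delta + \lambda + B) V_1 \in \E_{0,\mu}$. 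In the second step, since the Laplacian with mixed Dirichlet/Neumann boundary conditions on the flat cylinder $\Omega = G \times (a,b)$ admits a bounded $\rH^\infty$-calculus on $\rL^q(\Omega;\R^2)$, one obtains maximal $\rL^p$-$\rL^q$-regularity for the associated Cauchy problem; the lift to the time-weighted $\rL^p_\mu$-scale follows from the weighted maximal regularity theorem of Pr\"u{\ss}-Simonett. This yields, for sufficiently large $\lambda \ge 0$, a unique $\tilde V_2 \in \E_{1,\mu}$ solving $(\dt - \Delta + \lambda) \tilde V_2 = H_2$ with zero data.

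The main obstacle is the \emph{nonlocal lower-order term} $BV$. The key observation is that $V \mapsto BV$ factors as $V \mapsto (\dz V(b), \dz V(a))$ into $\rL_\mu^p(0,T;\rB^{1-\nicefrac{1}{q}}_{qq}(G;\R^2)^2)$ followed by the trivial extension $G \to \Omega$ in $z$, hence $B$ is a bounded operator from $\E_{1,\mu}$ into a space strictly embedded in $\E_{0,\mu}$ with a gain in time-regularity. Consequently, a standard Neumann-series perturbation around the unperturbed problem, combined with a shift $\lambda \ge \lambda_0$ chosen large enough so that $(\dt - \Delta + \lambda)^{-1} B$ becomes a strict contraction on $\E_{1,\mu}$, produces the solution $V_2$ of the full perturbed zero-data problem. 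Setting $V := V_1 + V_2$ then solves \eqref{eq:heat2} and satisfies the claimed estimate by combining the lifting estimate, the maximal regularity estimate, and the perturbation bound. Uniqueness follows by applying the estimate to the difference of two solutions with zero data; combined with the surjectivity established above and the continuous reverse bounds from the necessity part, this shows that $S_V$ is an isomorphism.
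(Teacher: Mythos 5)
Your proposal is correct but follows a genuinely different perturbation strategy than the paper. The paper reduces to homogeneous \emph{initial} data, derives an a priori estimate for the $s$-parameterized problem $\dt - \Delta + \lambda + sB$ that is uniform in $s \in [0,1]$, absorbs the $BV$-term via the small-$T$ factor $T^{1-\mu+\nicefrac{1}{p}}$ arising in \eqref{eq:B estimate}, applies the method of continuity to pass from $s=0$ (known by Denk--Hieber--Pr\"uss) to $s=1$, and then iterates the small-$T$ result forward in time (Step 4). You instead reduce to \emph{zero data altogether} via a trace-lifting $V_1$, and then absorb $B$ by choosing the spectral shift $\lambda$ sufficiently large: since $B$ has relative $\Delta$-bound zero, the weighted norm $\|\cdot\|_\lambda := \|\cdot\|_{\E_{1,\mu}} + \lambda\|\cdot\|_{\rL^p_\mu(\rL^q)}$ satisfies a maximal-regularity estimate with $\lambda$-independent constant for ${}_0\E_{1,\mu}$, and $\|BV\|_{\E_{0,\mu}} \le (\eps + C_\eps/\lambda)\|V\|_\lambda$ makes $(\dt-\Delta+\lambda)^{-1}B$ a contraction. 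This buys you a one-shot Neumann series on the whole interval $(0,T)$ and eliminates the paper's time-stepping Step~4. The trade-off is that your $\lambda$ is now dictated by the size of $B$ (it serves double duty as both spectral gap and perturbation absorber), whereas the paper's $\lambda$ is only needed for invertibility of the mixed-boundary Laplacian in the pure-Neumann case (cf.\ \autoref{rem:main:theorem}(ii)). One small imprecision worth flagging: your claim that $B$ gives a ``gain in time-regularity'' is not quite right and not what the contraction actually uses --- the operative fact is the relative $\Delta$-bound zero in space, as in \eqref{eq:B estimate}; the time-regularity of $BV$ is no better than $\rL^p_\mu$.
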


\begin{proof}
We start by noting that the necessity of the conditions given in Assumption 4.1 follows by standard arguments of trace theory. This is not the case  
for the sufficiency part of the proof. We divide this proof into several steps. Let us note first that time-weighted maximal $\rL^p$-regularity is equivalent 
to unweighted maximal $\rL^p$-regularity, see~\cite{PS:16}. 

\begin{step}\label{step1}
For homogeneous boundary conditions $B^{\D} = B^{\mathrm{N}} = 0$, we deduce the maximal $\rL_\mu^p$-regularity of $A V \coloneqq - \Delta V + \lambda V  + B V$ from the maximal regularity of 
the operator $-\Delta + \lambda \colon \rW^{2,q}_{\bc}(\Omega) \subset \rLq(\Omega) \to \rLq(\Omega)$ and the relative $\Delta$-boundedness of $B$, see e.\ g.~\cite{DHP:03}.
\end{step}

\begin{step}
Consider the perturbed problem
\begin{equation}
	\left\{
	\begin{aligned}
		\dt V - \Delta V+ \lambda V + s B V &= H, &&\onOmegaT, \\
		V(0) &= V_0, &&\onOmega, \\
		V &= B^{\D}, &&\text{ on } \Gamma_{\D} \times (0,T), \\
		\dz V &= B^{\mathrm{N}}, &&\text{ on } \Gamma_{\mathrm{N}} \times (0,T),
	\end{aligned}
	\right. 
	\label{eq:heat3}
\end{equation}
for $s \in [0,1]$. If $s = 0$, the optimal data result is known for standard domains by the results given in \cite{DHP:07}. It carries over to the situation of cylindrical domains 
by a localization procedure.  Hence, there exists a unique solution $V \in \E_{1,\mu}$ to \eqref{eq:heat3} if and only if the assumptions from the assertion are satisfied. 
\end{step}

\begin{step}\label{step3}
W.\ l.\ o.\ g., we may assume $V_0 = 0$. By \autoref{step1}, we have $\dt - \Delta + \lambda + s B \colon _0\E_{1,\mu} \to \E_{0,\mu}$.
On the other hand, 
$V|_{\Gamma_{\D}} \in \rLp_\mu(0,T;\rB_{qq}^{2-\nicefrac{1}{q}}(G;\R^2))$ and $\dz V|_{\Gamma_{\rN}}  \in \rLp_\mu(0,T;\rB_{qq}^{1-\nicefrac{1}{q}}(G;\R^2))$.

In the sequel, the prescript $_0$ indicates homogeneous initial values.
Arguing as in the proof of the necessity part in \cite{DHP:07}, we find that 
$V|_{\Gamma_{\D}} \in {_0\rF_{pq,\mu}^{1 - \nicefrac{1}{2q}}}(\R_+;\rLq(G;\R^2))$ as well as $\dz V|_{\Gamma_{\rN}} \in {_0\rF_{pq,\mu}^{\nicefrac{1}{2} - \nicefrac{1}{2q}}}(\R_+;\rLq(G;\R^2))$. We hence conclude that
\begin{equation*}
    (\dt - \Delta +\lambda + s B, \tr|_{\Gamma_{\D}}, \dz|_{\Gamma_{\rN}}) \colon _0\E_{1,\mu} \to \E_{0,\mu} \times \F_{\D,\mu} \times \F_{\rN,\mu}
\end{equation*}
is a bounded linear operator. The maximal regularity yields the estimate
\begin{equation*}
    \begin{aligned}
        \| V \|_{\E_{1,\mu}} 
       &\le C \bigl(\| \dt V - \Delta V + \lambda V + sBV \|_{\E_{0,\mu}} + \| BV \|_{\E_{0,\mu}} +  \| V|_{\Gamma_{\D}} \|_{\F_{\D,\mu}} + \| \dz V_{\Gamma_{\rN}} \|_{\F_{\rN,\mu}}\bigr),
    \end{aligned}
\end{equation*}
where the constant $C > 0$ is independent of $s$. We proceed with the continuity method. For this, we recall e.\ g.~from \cite{Ama:95} the embedding 
$_0\E_{1,\mu}(0,T) \hra \mathrm{BUC}([0,T];\rX_{\gamma,\mu})$, where the notation $\E_{1,\mu}(0,T)$ means that we consider the space on the time interval $(0,T)$.
We will typically omit this piece of notation in the remainder of the section unless it helps to avoid confusion.
Note that the embedding constant in the preceding embedding is independent of $T$. This leads to 
\begin{equation}\label{eq:B estimate} 
    \begin{aligned}
        \| BV \|_{\E_{0,\mu}} &\le C \cdot \| V \|_{\rLp_\mu(0,T;\rW^{1+\nicefrac{1}{q} + \eps,q}(\Omega))} \le C \cdot \eps \| V \|_{\E_{1,\mu}} + \frac{C}{\eps} \cdot \| V \|_{\rLp_\mu(0,T;\rLq(\Omega))}\\
        &\le C \cdot \eps \| V \|_{\E_{1,\mu}} + \frac{C}{\eps (p(1-\mu) + 1)^{\nicefrac{1}{p}}} \cdot T^{1-\mu + \nicefrac{1}{p}} \cdot \| V \|_{\mathrm{BUC}([0,T];\rX_{\gamma,\mu})}\\
        &\le C \cdot \bigl(\eps + \frac{T^{1-\mu + \nicefrac{1}{p}}}{\eps (p(1-\mu) + 1)^{\nicefrac{1}{p}}}\bigr) \| V \|_{\E_{1,\mu}}.
    \end{aligned}
\end{equation}
Thus, choosing $\eps > 0$ and $T > 0$ sufficiently small, we employ an absorption argument to deduce that
\begin{equation*}
    \| V \|_{\E_{1,\mu}} \le C\bigl(\left\| \dt V - \Delta V + \lambda V +sBV \right\|_{\E_{0,\mu}} +  \left\| V|_{\Gamma_{\D}} \right\|_{\F_{\D,\mu}} + \left\| \dz V_{\Gamma_{\rN}} \right\|_{\F_{\rN,\mu}}\bigr).
\end{equation*} 
The method of continuity then yields that
\begin{equation*}
    (\dt - \Delta +\lambda + s B, \tr|_{\Gamma_{\D}}, \dz|_{\Gamma_{\rN}}) \colon _0\E_{1,\mu} \to \E_{0,\mu} \times \F_{\D,\mu} \times \F_{\rN,\mu}
\end{equation*}
is an isomorphism. 
Hence, \eqref{eq:heat3} admits a solution for $s = 1$ and $T > 0$ small, with $T$ independent of $V_0$.
\end{step}

\begin{step}\label{step4}
In this final step, we extend the result from \autoref{step3} to large $T$. To this end, let $\frac{9}{10} \cdot T' < T_1 < T'$, where $T'$ is the largest $T$ from \autoref{step3}, 
and $V^1 \in \E_{1,\mu}(0,T_1) \hookrightarrow \mathrm{BUC}([0,T_1];\rX_{\gamma,\mu})$ is the solution of \eqref{eq:heat2}. 
We consider the problem \eqref{eq:heat3} on the time interval $[T_1,2T_1]$ with initial data $V(T_1) = V^1(T_1)$. 
This is legitimate in view of $V^1(T_1) \in \rX_{\gamma,\mu}$.
		Repeating the argument of \autoref{step3}, we obtain the existence of a unique solution $V^2$ of \eqref{eq:heat2} on $[T_1,2T_1]$. 
        Since $[0,T'] \cap [T_1,2T_1] \neq \emptyset$, we deduce from the uniqueness the equality~$V_1|_{[0,T'] \cap [T_1,2T_1]} = V_2|_{[0,T'] \cap [T_1,2T_1]}$.
        As a result, we get a unique solution $V \in \E_{1,\mu}(0,2T_1)$ of \eqref{eq:heat2}.
        Iterating this argument, we obtain the unique solution $V \in \E_{1,\mu}(0,T)$.
        \qedhere
\end{step}
\end{proof}

We now proceed with the proof of \autoref{thm:optdata}. It is again subdivided into several steps.

\begin{proof}[Proof of \autoref{thm:optdata}]
\setcounter{stp}{0}
\begin{step}\label{step11}
By \eqref{eq:hstokes1}$_2$ and \eqref{eq:hstokes1}$_3$, the pressure $\pi$ and the vertical velocity $w$ can be deduced as
\begin{align}
    \pi &= \pi_s + \int_a^z \dz \pi \d \xi = \pi_s + \int_a^z f_w \d \xi \quad \text{ and } \label{eq:pressure}\\
    w &= w(a) + \int_a^z \dz w \d \xi = b_{w,a}^{\D} + \int_a^z f_{\mdiv} \d \xi - \int_a^z \divH v \d \xi .
\label{eq:w} 
\end{align}
As $\divH \vbar = \frac{b_{w,a}^{\D} - b_{w,b}^{\D}}{b-a} + \fbar_{\mdiv} =: g_{\mdiv}$, \eqref{eq:hstokes1} is equivalent to
\begin{equation}
	\left\{
	\begin{aligned}
		\dt v - \Delta v + \lambda v + \nablaH \pi_s &= g_v, &&\on \Omega \times (0,T), \\
		\partial_z \pi_s &= 0, &&\on \Omega \times (0,T), \\
		\divH \vbar &= g_{\mdiv}, &&\on G \times (0,T), 
	\end{aligned}
	\right. 
	\label{eq:hstokes3}
\end{equation}
where $g_v := f_v - \nablaH \int_a^z f_w \d \xi$, and the system is complemented by initial conditions $v(0) = v_0$ on $\Omega$ as well as boundary conditions $v = b_v^{\D}$ on $\Gamma_{\D} \times (0,T)$ and $\dz v = b_v^{\mathrm{N}}$ on $\Gamma_{\mathrm{N}} \times (0,T)$.
\end{step}

\begin{step}\label{step22}
We now split into barotropic and baroclinic modes. 
Taking the vertical average in \eqref{eq:hstokes3}$_1$, we find $\dt \vbar - \DeltaH \vbar +\lambda \vbar + \nablaH \pi_s = \bar{g}_v + B \vtilde$, with $B \vtilde = \frac{\dz \vtilde(b)-\dz \vtilde(a)}{b-a}$, so $\vbar$ satisfies the 2D Stokes problem
\begin{equation}
	\left\{
	\begin{aligned}
		\dt \vbar - \DeltaH \vbar + \lambda \vbar + \nablaH \pi_s &= \bar{g}_v + B \vtilde, &&\on G \times (0,T), \\
		\divH \vbar &= g_{\mdiv}, &&\on G \times (0,T), \\
		\vbar(0) &= \vbar_0, &&\on G.
	\end{aligned}
	\right. 
	\label{eq:stokes}
\end{equation}
Subtracting \eqref{eq:stokes} from \eqref{eq:hstokes3}, we obtain the baroclinic mode
\begin{equation}
	\left\{
	\begin{aligned}
		\dt \vtilde - \Delta \vtilde + \lambda \vtilde + B \vtilde &= \tilde{g}_v, &&\on \Omega \times (0,T), \\
		\vtilde(0) &= \vtilde_0, &&\on \Omega, \\
		\vtilde &= b_v^{\D} - \vbar, &&\text{ on } \Gamma_{\D} \times (0,T), \\
		\dz \vtilde &= b_v^{\mathrm{N}}, &&\text{ on } \Gamma_{\mathrm{N}} \times (0,T).
	\end{aligned}
	\right. 
	\label{eq:heat1}
\end{equation}
\end{step}

\begin{step}{\em Sufficiency}. 
By \autoref{step11}, solving \eqref{eq:hstokes1} is equivalent to solving \eqref{eq:hstokes3}. Thanks to \autoref{step22}, it suffices to solve the coupled problems \eqref{eq:stokes} and \eqref{eq:heat1}.
Consider the map 
$$
R \colon \E_{1,\mu} \to \E_{1,\mu} \quad \mbox{ with } \quad V \mapsto S_V (\tilde{g}_v, \tilde{v}_0,b_v^{\D} - S_U ( \bar{g}_v+BV,g_{\mdiv},\vbar_0), b_v^{\rN}),
$$
where $S_U$ denotes the solution map associated to the 2D Stokes problem from \eqref{eq:stokes}, whose existence is guaranteed by \cite[Chapter~7]{PS:16}, and 
$S_V$ represents the solution map described in \eqref{eq:SV}. 
The coupled problems~\eqref{eq:stokes} and \eqref{eq:heat1} then admit a unique solution $(\vbar,\vtilde)$ if and only if $R$ has a unique fixed point.

First, we verify that $R \colon \E_{1,\mu} \to \E_{1,\mu}$ is well-defined and start with $S_U( \bar{g}_v+BV,g_{\mdiv},\vbar_0)$ for $V \in \E_{1,\mu}$.
From \autoref{assu:data}(i), we derive
$    \bar{g}_v = \fbar_v - \overline{\int_a^z \nablaH f_w \d \xi} \in \rLp(0,T ; \rLq (G;\R^2)) \eqqcolon \E_{0,\mu}^{\rS}$. 
Concatenating $V \in \E_{1,\mu}$ with $B \in \sL(\rW^{1+\nicefrac{1}{q}+\eps,q}(\Omega),\rLq(G))$, we deduce that $BV \in \E_{0,\mu}^{\rS}$, so $\bar{g}_v+BV \in \E_{0,\mu}^{\rS}$.
\autoref{assu:data}(ii) now yields that 
$    g_{\mdiv} = \fbar_{\mdiv} + \frac{b_{w,a}^{\D} - b_{w,b}^{\D}}{b-a} \in \G_{1,\mu}.$
Additionally invoking \autoref{assu:data}(iii) and (v), we find that $\vbar_0 \in \rB_{qp}^{2(\mu-\nicefrac{1}{p})}(G;\R^2) \eqqcolon \rX_{\gamma,\mu}^{\rS}$ and $\divH \vbar_0 = \frac{b_{w,a}^{\D}(0) - b_{w,b}^{\D}(0)}{b-a} + \fbar_{\mdiv}(0) \in \sD'(G)$.
As a consequence, the term~$S_U( \bar{g}_v+BV,g_{\mdiv},\vbar_0) \colon \E_{0,\mu}^{\rS} \times \G_{1,\mu} \times \rX_{\gamma,\mu}^{\rS} \to \E_{1,\mu}^{\rS}$ is well-defined by \cite[Chapter~7]{PS:16}.
For brevity, we will denote the resulting solution $S_U( \bar{g}_v+BV,g_{\mdiv},\vbar_0)$ by $U_V$.

It remains to check that $S_V (\tilde{g}_v, \tilde{v}_0,b_v^{\D} - U_V, b_v^{\rN})$ is also well-defined.
First, by virtue of \autoref{assu:data}(i), observe that
$
    \tilde{g}_v = \ftilde_v - \widetilde{\int_a^z \nablaH f_w \d \xi} \in  \E_{0,\mu}.
$
Further, \autoref{assu:data}(iii) implies $\Tilde{v}_0 \in  \rX_{\gamma,\mu}$. Next, observe the relation $U_V \in \rL_\mu^p(0,T;\rH^{2,q}(G;\R^2)) \subset \rLp_\mu(0,T;\rB_{qq}^{2-\nicefrac{1}{q}}(G;\R^2))$. 
\autoref{assu:data}(iv) yields $b_v^{\D} - U_V \in \F_{\D,\mu}$ provided $\rH_\mu^{1,p}(0,T;E) \hra \rF_{pq,\mu}^{1-\nicefrac{1}{2q}}(0,T;E)$ for some Banach space $E$. 
Indeed, we have
\begin{equation*}
    \rH_\mu^{1,p}(0,T;E) \hra \rF_{pq,\mu}^{1-\eps}(0,T;E) \hra \rF_{pq,\mu}^{1-\nicefrac{1}{2q}}(0,T;E)
\end{equation*}
for $\eps > 0$ sufficiently small.
This results in $b_v^{\D} - U_V \in \F_{\D,\mu}$.
\autoref{assu:data}(iv) in turn guarantees that~
$\vtilde_0 = b_v^{\D}(0) - \vbar_0 \text{ if } 1 - \frac{1}{2q} > \frac{1}{p} + 1 - \mu$ as well as  $\dz \vtilde_0 = b_v^{\rN} \text{ if } \frac{1}{2} - \frac{1}{2q} > \frac{1}{p} 
+ 1 - \mu$.
Thanks to \autoref{lem:heat equation}, it follows that $S_V (\tilde{g}_v, \tilde{v}_0,b_v^{\D} - U_V, b_v^{\rN}) \colon \E_{0,\mu} \times \rX_{\gamma,\mu} \times \F_{\D,\mu} \times \F_{\rN,\mu} \to \E_{1,\mu}$ is well-defined.
Putting this together with the above result on $U_V$, we argue that $R \colon \E_{1,\mu} \to \E_{1,\mu}$ is also well-defined.

Last, we prove the contraction property of $R$. 
By linearity, for $U_i := S_U( \bar{g}_v+BV_i,g_{\mdiv},\vbar_0)$, we first obtain~$U_1 - U_2 = S_U(B(V_1-V_2),0,0)$.
Upon using the embedding $\E_{1,\mu}^{\rS} \hra \F_{\D,\mu}$, \autoref{lem:heat equation} yields 
\begin{equation*}
    \| R(V_1) - R(V_2) \|_{\E_{1,\mu}} = \| S_V(0, 0,- S_U(B(V_1-V_2),0,0),0) \|_{\E_{1,\mu}} \le C \cdot \| S_U(B(V_1-V_2),0,0) \|_{\F_{\D,\mu}}. 
\end{equation*}
With regard to the estimate for $\| S_U(B(V_1-V_2),0,0) \|_{\E_{1,\mu}^{\rS}}$, we note that $S_U(B(V_1-V_2),0,0)$ solves a 2D~Stokes equation with right-hand side $F = B(V_1 - V_2)$.
Hence, the corresponding optimal data result, see \cite[Chapter~7]{PS:16}, implies that
\begin{equation*}
    \| S_U(B(V_1-V_2),0,0) \|_{\E_{1,\mu}^{\rS}} \le C \cdot \| B(V_1 - V_2) \|_{\E_{0,\mu}^{\rS}}.
\end{equation*}
Combining this estimate  with \eqref{eq:B estimate}, for a $T$-independent constant $C > 0$, we derive 
\begin{equation*}
     \| R(V_1) - R(V_2) \|_{\E_{1,\mu}} \le C \cdot \bigl(\eps + \frac{T^{1-\mu + \nicefrac{1}{p}}}{\eps (p(1-\mu) + 1)^{\nicefrac{1}{p}}}\bigr) \| V_1  -V_2 \|_{\E_{1,\mu}} \le \kappa \| V_1  -V_2 \|_{\E_{1,\mu}},
\end{equation*}
with $\kappa \in (0,1)$ provided $\eps > 0$ and $T > 0$ are sufficiently small.
The contraction mapping principle then yields the existence of a unique $V_* \in \E_{1,\mu}$ such that $R V_* = V_*$, i.\ e., a unique solution to \eqref{eq:heat1} and then also to \eqref{eq:stokes}.
The extension to arbitrary $T$ can be obtained as in the   in the proof of \autoref{lem:heat equation}.
\end{step}

\begin{step}{\em Necessity}.
Let $v \in \E_{1,\mu} $ be the unique strong solution of \eqref{eq:hstokes1}. We then obtain a unique,  strong solutions $\vbar \in \E_{1,\mu}^{\rS}$ of \eqref{eq:stokes} and 
$\vtilde \in \E_{1,\mu}$ of \eqref{eq:heat1}. Furthermore, $\bar{g}_v \in \E_{0,\mu}$, $g_{\mdiv} \in \G_{1,\mu}$, $\vbar_0 \in \rX_{\gamma,\mu}^{\rS}$ and $\divH \vbar_0 = g_{\mdiv}(0)$ in $\sD'(G)$.
Thus  \autoref{assu:data}(i), (ii), (iii) and (v) are satisfied. 

For $\vtilde$, we invoke $\E_{1,\mu}^{\rS} \hra \F_{\D,\mu}$ and use \autoref{lem:heat equation} to infer that $\Tilde{g}_v \in \E_{0,\mu}$, $\vtilde_0 \in \rX_{\gamma,\mu}$, $b_v^{\D} \in \F_{\D,\mu}$, $b_v^{\rN} \in \F_{\rN,\mu}$ and $v_0 = b_v^{\D}(0)$ if $1 - \frac{1}{2q} > \frac{1}{p} + 1 - \mu$ as well as $\dz v_0 = b_v^{\rN}(0)$ in the case that $\frac{1}{2} - \frac{1}{2q} > \frac{1}{p} + 1 - \mu$.
Hence, \autoref{assu:data} is valid. \qedhere
\end{step}
\end{proof}

Finally, let us note that it is also possible to handle the situation with non-constant coefficients as for the equations accounting for the atmosphere in \eqref{eq:CAO}.

\begin{rem}\label{rem:necessary adjustment for atmospheric eq}{\rm
It is also possible to consider instead of  \eqref{eq:hstokes1}$_1$ the equation 
\begin{equation*}
    \dt v - \DeltaH v - c \partial_p(p^2 \partial_p v) + \nablaH \pi = f_v, \text{ on } \Omegaatm \times (0,T),
\end{equation*}
where $c > 0$ is a constant.
In fact, taking the vertical average in this equation, we find that the relevant perturbative operator $B$ can still be used.
Hence, it remains to argue that an analogue of \autoref{lem:heat equation} still holds true for the resulting adjusted version of \eqref{eq:heat2}.
On the other hand, we calculate
\begin{equation*}
    c \partial_p(p^2 \partial_p v) = c p^2 \partial_{pp} v + 2 c p \partial_p.
\end{equation*}
Thanks to the parameter-ellipticity of the resulting non-divergence form operator $\DeltaH v + c p^2 \partial_{pp} v + 2 c p \partial_p v$ with smooth and 
bounded coefficients, the assertion of \autoref{thm:optdata} remains valid also in this case. 
}
\end{rem}

\section{Local well-posedness and blow-up criterion}\label{sec:local+blowup}

In this section, we address first the local well-posedness of the CAO-system. 
In addition, we provide a blow-up criterion for the coupled system that will be used to derive the global well-posedness by means of a-priori estimates in \autoref{sec:global}.

The proof of the local well-posedness is based on the optimal data theory for the primitive equations presented in \autoref{sec:linearizedproblem}.
We emphasize that sharp estimates for the boundary terms in  Triebel-Lizorkin spaces guarantee to handle the boundary terms in the fixed point argument and in the blow-up criterion.

We first introduce some notation. 
The ground spaces are given by $\rX_0^\atm \coloneqq \rL_{\sigmabar}^q(\Omegaatm)$ and~$\rX_0^\ocn \coloneqq \rL_{\sigmabar}^q(\Omegaocn)$.
Furthermore, for $\beta \in (0,1)$, we define $\rX_\beta^\atm \coloneqq \rH^{2 \beta,q}(\Omegaatm;\R^2) \cap \rX_0^{\air}$ as well as $\rX_\beta^\ocn \coloneqq \rH^{2 \beta,q}(\Omegaocn;\R^2) \cap \rX_0^{\ocn}$ and set~$\rX_1^{\atm} \coloneqq \rH^{2,q}(\Omegaatm;\R^2) \cap \rX_0^{\air}$ and $\rX_1^{\ocn} \coloneqq \rH^{2,q}(\Omegaocn;\R^2) \cap \rX_0^{\ocn}$.
For $\mu \in (\nicefrac{1}{p},1]$, the data spaces are defined by
\begin{equation*}
    \E_{0,\mu,\air} \coloneqq \rL_\mu^p(0,T;\rX_0^\atm), \enspace \E_{0,\mu,\ocn} \coloneqq \rL_\mu^p(0,T;\rX_0^\ocn) \enspace \text{as well as} \enspace \E_{0,\mu,T} \coloneqq \E_{0,\mu,\air} \times \E_{0,\mu,\ocn}.
\end{equation*}
Moreover, the solution spaces are given by 
\begin{equation*}
    \begin{aligned}
        \E_{1,\mu,\atm} 
        \coloneqq \rH_\mu^{1,p}(0,T;\rX_0^\atm) \cap \rL_\mu^{p}(0,T;\rX_1^\atm), \, \E_{1,\mu,\ocn} 
        \coloneqq \rH_\mu^{1,p}(0,T;\rX_0^\ocn) \cap \rL_\mu^{p}(0,T;\rX_1^\ocn) \enspace  \text {and} \enspace \E_{1,\mu,T} \coloneqq \E_{1,\mu,\atm} \times \E_{1,\mu,\ocn}.
    \end{aligned}
\end{equation*}

The spaces for the boundary data in the present case of Neumann boundary conditions are given by
\begin{equation*}
    \begin{aligned}
        \F_{\mu,\air} 
        &\coloneqq \rF_{pq,\mu}^{\nicefrac{1}{2}-\nicefrac{1}{2q}}(0,T;\rLq(G;\R^2)) \cap \rLp_\mu(0,T;\rB_{qq}^{1 - \nicefrac{1}{q}}(G;\R^2)),\\
        \F_{\mu,\ocn} 
        &\coloneqq \rF_{pq,\mu}^{\nicefrac{1}{2}-\nicefrac{1}{2q}}(0,T;\rLq(G;\R^2)) \cap \rLp_\mu(0,T;\rB_{qq}^{1 - \nicefrac{1}{q}}(G;\R^2)) \enspace \text{and} \enspace \F_{\mu,T} \coloneqq \F_{\mu,\air} \times \F_{\mu,\ocn}.
    \end{aligned}
\end{equation*}

\subsection{Local well-posedness}\label{ssec:local wp}
\

Throughout this section, we assume that $p_s \equiv 1$, as it does not affect the  analysis.  Our  local well-posedness result then reads as follows.

\begin{prop}\label{prop:localwellposed}
Let $T>0$, $p,q \in (1,\infty)$ satisfy $\mu_c = \frac{1}{p} + \frac{1}{q} \leq 1$ and $\mu = \mu_c$ if $q \ge 2$, while $\mu = \mu_c + \eps \le 1$, for small $\eps > 0$, if $q < 2$.
Let  $(\fair, \focn) \in \E_{0,\mu,T}$. 
In addition, assume $v_0 = (\vair_0, \vocn_0) \in \rB_{qp,\sigmabar}^{\nicefrac{2}{q}}(\Omegaatm) \times \rB_{qp,\sigmabar}^{\nicefrac{2}{q}}(\Omegaocn)$ if $q \ge 2$, and $v_0 \in \rB_{qp,\sigmabar}^{\nicefrac{2}{q} + \eps}(\Omegaatm) \times \rB_{qp,\sigmabar}^{\nicefrac{2}{q} + \eps}(\Omegaocn)$ if $q < 2$. 

Then there exists $0 < T' = T'(v_0) \le T$ such that the system \eqref{eq:CAO} admits a unique, strong solution~$(\vair,\omegaair,\Phi,\vocn,\wocn,\pi)$ satisfying 
 $(\vair,\vocn) \in \E_{1,\mu,T'}$.
\end{prop}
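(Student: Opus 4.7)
The plan is to recast \eqref{eq:CAO} as a fixed-point problem built on the optimal $\rL^p$-$\rL^q$ theory from \autoref{sec:linearizedproblem}, in particular \autoref{cor:main}. After recovering $\Phi_s$, $\pi_s$ and $\omegaair$, $\wocn$ a posteriori through \eqref{eq:pressure} and \eqref{eq:w}, the atmospheric and oceanic problems reduce to two decoupled hydrostatic Stokes systems with Neumann data, linked only through the trilinear transport nonlinearities $(\vatm\cdot\nablaH)\vatm + \omegaair\,\partial_p\vatm$ and $(\vocn\cdot\nablaH)\vocn + \wocn\,\dz\vocn$ and through the fully nonlinear interface term $V|V|$ with $V = \vatm|_\Gamma - \vocn|_\Gamma$. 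The remark at the end of \autoref{sec:linearizedproblem} shows that the non-constant coefficient atmospheric operator fits into the same framework, so there is no obstruction in using \autoref{cor:main} on $\Omegaatm$.

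Concretely, I would first solve the purely inhomogeneous linear problem with data $(\fatm,\focn)$ and initial values $(\vatm_0,\vocn_0)$ using \autoref{cor:main}, obtaining a reference solution $(v_*^\atm,v_*^\ocn) \in \E_{1,\mu,T}$. For $R > 0$ and $T' \in (0,T]$ to be chosen, I would then introduce the closed ball $\bar B_R \subset \E_{1,\mu,T'}$ around $(v_*^\atm,v_*^\ocn)$ and define
\begin{equation*}
\Psi_{T'} \colon \bar B_R \to \E_{1,\mu,T'}, \qquad (\hat\vatm,\hat\vocn) \mapsto (\vatm,\vocn),
\end{equation*}
where $(\vatm,\vocn)$ is provided by \autoref{cor:main} applied in each layer with forces $\fatm - (\hat\vatm\cdot\nablaH)\hat\vatm - \hat\omega\,\partial_p\hat\vatm$ and $\focn - (\hat\vocn\cdot\nablaH)\hat\vocn - \hat w\,\dz\hat\vocn$, initial data $(\vatm_0,\vocn_0)$, vanishing Neumann data on $\Gammaairb$ and $\Gab$, and Neumann data $-\hat V|\hat V|$ on $\Gammaairu$ and $\hat V|\hat V|$ on $\Gau$, with $\hat V = \hat\vatm|_\Gamma - \hat\vocn|_\Gamma$. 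The existence of a unique local solution then follows from Banach's fixed-point theorem once $\Psi_{T'}$ is shown to be a well-defined self-map on $\bar B_R$ and a strict contraction; by linearity and the boundedness of the linear solution operator from \autoref{cor:main} this reduces to two bilinear estimates with $T'$-factors tending to zero, namely a transport bound in $\E_{0,\mu,T'}$ and a boundary-nonlinearity bound in $\F_{\mu,T'}$.

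The transport bound is the standard bilinear estimate for the primitive-equation nonlinearity in time-weighted $\rL^p$-$\rL^q$ spaces under $\mu_c = \nicefrac{1}{p}+\nicefrac{1}{q}\le 1$, cf.~\cite{HK:16,GGHHK:20b}, where the slight shift $\mu = \mu_c + \eps$ used when $q<2$ serves to produce a small positive power of $T'$ and to retain time-continuity of the solution via $\E_{1,\mu,T'} \hookrightarrow \rC([0,T'];\rB_{qp,\sigmabar}^{2(\mu-\nicefrac{1}{p})})$. The main obstacle is the estimate for $V|V|$ in the Triebel--Lizorkin boundary space. The interface trace of $\E_{1,\mu,T'}$ lies in the Dirichlet-type space
\begin{equation*}
\rF_{pq,\mu}^{1-\nicefrac{1}{2q}}(0,T';\rLq(G;\R^2)) \cap \rLp_\mu(0,T';\rB_{qq}^{2-\nicefrac{1}{q}}(G;\R^2)),
\end{equation*}
which embeds with a spare half-derivative in time and one full derivative in space into $\F_{\mu,T'}$. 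This buffer is used in the paraproduct decomposition of the $\rC^{1,1}$-Nemytskii map $s\mapsto s|s|$ in vector-valued Triebel--Lizorkin spaces, combined with Sobolev embeddings on $G$, to establish
\begin{equation*}
\bigl\| \hat V_1|\hat V_1| - \hat V_2|\hat V_2|\bigr\|_{\F_{\mu,T'}} \le C(T')\,\bigl(\|\hat V_1\|_{\mathrm{tr}}+\|\hat V_2\|_{\mathrm{tr}}\bigr)\,\|\hat V_1 - \hat V_2\|_{\mathrm{tr}}
\end{equation*}
with $C(T') \to 0$ as $T' \to 0$, where $\| \cdot \|_{\mathrm{tr}}$ denotes the Dirichlet trace norm above. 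This paraproduct estimate is the step I expect to demand the most care. Granted it together with the transport estimate, $\Psi_{T'}$ maps $\bar B_R$ into itself and is a strict contraction for $T' > 0$ small enough in terms of $\|(\vatm_0,\vocn_0)\|$ in $\rB_{qp,\sigmabar}^{\nicefrac{2}{q}}$ (respectively $\rB_{qp,\sigmabar}^{\nicefrac{2}{q}+\eps}$), yielding the claimed local strong solution $(\vatm,\vocn) \in \E_{1,\mu,T'}$.
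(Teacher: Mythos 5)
Your proposal follows essentially the same route as the paper: construct a reference solution for the decoupled linear problem with data $(\fatm,\focn,\vatm_0,\vocn_0)$, set up a fixed-point map on a small ball around it in $\E_{1,\mu,T'}$ using the optimal-data solution operator of \autoref{sec:linearizedproblem}, and close the argument with a transport estimate in $\E_{0,\mu,T'}$ plus a paraproduct estimate for $V|V|$ in $\F_{\mu,T'}$, which is exactly the content of \autoref{lem:weighted paraprod estimate} and \autoref{lem:est of boundary term}. Two minor technical remarks: the paper does not route the boundary estimate through the Dirichlet trace space $\rF_{pq,\mu}^{1-\nicefrac{1}{2q}}\cap\rLp_\mu\rB_{qq}^{2-\nicefrac{1}{q}}$, but works with the product structure $V\cdot|V|$ directly via the weighted paraproduct lemma, trace embeddings $\rW^{s+\nicefrac{1}{r},r}(\Omega)\hookrightarrow\rW^{s,r}(G)$, and the mixed-derivative theorem, so no Nemytskii machinery is invoked; and the constant in the bilinear boundary estimate of \autoref{lem:est of boundary term} is genuinely $T$-independent, so the shrinking factor does not come from a $C(T')\to0$ in the paraproduct bound itself, but from decomposing $v=v_*+(v-v_*)$ with $v-v_*$ having zero initial trace (so the embedding constant into $\mathrm{BUC}([0,T];\rX_{\gamma,\mu})$ is $T$-uniform) and letting the $\rL^{2p}_\tau(0,T')$-norms of the fixed reference solution vanish as $T'\to0$ while also taking the ball radius $r$ small. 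Neither point changes the mechanism; your write-up captures the correct structure.
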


We remark that \autoref{prop:localwellposed} also holds for general time weights $\mu \in [\nicefrac{1}{p} + \nicefrac{1}{q},1]$.
The proof of \autoref{prop:localwellposed} is based on  a paraproduct estimate in weighted Triebel-Lizorkin spaces, essentially due to Chae \cite{Chae:02}.
Let us observe that it follows by adjusting  the proof of \cite[Proposition~1.2]{Chae:02} to the weighted situation by invoking the continuity of the Hardy-Littlewood maximal operator on 
weighted $\rL^q$-spaces, see \cite[Theorem~V.3.1]{Stein:93}. For related results concerning bilinear estimates for the Navier-Stokes equations in homogenous Triebel-Lizorkin spaces, we refer to 
\cite{KS:04}.

\begin{lem}
\label{lem:weighted paraprod estimate}
Let $G \in \{\R^2,\T^2\}$, $s > 0$, $p \in (1,\infty)$, $q \in (1,\infty]$, $\mu \in (\nicefrac{1}{p},1]$ and $p_1, p_2, r_1, r_2 \in (1,\infty)$ satisfying $\frac{1}{p} = \frac{1}{p_1} + \frac{1}{p_2} = \frac{1}{r_1} + \frac{1}{r_2}$.
Then there exists a constant $C > 0$ such that for $\tau_i \in (\nicefrac{1}{p_i},1]$ as well as~$\sigma_i \in (\nicefrac{1}{r_i},1]$, $i=1,2$, with $p(1-\mu) = p_i(1-\tau_i) = r_i(1-\sigma_i)$, it holds that
\begin{equation*}
    \| f g \|_{\rF_{pq,\mu}^s(G)} \le C\bigl(\| f \|_{\rL_{\tau_1}^{p_1}(G)} \| g \|_{\rF_{p_2 q,\tau_2}^s(G)} + \| g \|_{\rL_{\sigma_1}^{r_1}(G)} \| f \|_{\rF_{r_2 q,\sigma_2}^s(G)}\bigr).
\end{equation*}
\end{lem}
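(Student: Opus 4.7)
The plan is to mirror Chae's paraproduct argument \cite{Chae:02}, replacing each use of the Hardy--Littlewood and Fefferman--Stein maximal inequalities by their weighted counterparts. First I would decompose the product via Bony's paraproduct,
\[
fg = T_f g + T_g f + R(f,g), \quad T_f g := \sum_{k \ge 0} S_{k-3} f \cdot \Delta_k g, \quad R(f,g) := \sum_{|j-k|\le 2}\Delta_j f \cdot \Delta_k g,
\]
using the Littlewood--Paley blocks $\Delta_k := S_k - S_{k-1}$ associated with the dyadic partition $\varphi_k$ introduced before \autoref{sec:main}.

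For the low--high piece $T_f g$, each summand $S_{k-3}f\cdot \Delta_k g$ has Fourier support in an annulus $\{|\xi|\sim 2^k\}$, so that only finitely many $\Delta_j(T_f g)$ are nontrivial for each $k$. This reduces the Triebel--Lizorkin norm to
\[
\|T_f g\|_{\rF^s_{pq,\mu}(G)} \lesssim \Bigl\|\Bigl(\sum_k 2^{ksq}|S_{k-3}f|^q |\Delta_k g|^q\Bigr)^{\nicefrac{1}{q}}\Bigr\|_{\rL^p_\mu(G)}.
\]
After the pointwise bound $|S_{k-3}f|\lesssim Mf$ by the Hardy--Littlewood maximal function, I would apply Hölder's inequality in the weighted space --- calibrated by the assumption $p(1-\mu) = p_1(1-\tau_1) = p_2(1-\tau_2)$ --- together with the weighted Fefferman--Stein inequality on $\rL^{p_2}_{\tau_2}(G;\ell^q)$ and the weighted boundedness of $M$ on $\rL^{p_1}_{\tau_1}(G)$, which yields the first summand on the right-hand side of the claimed estimate. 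The symmetric high--low piece $T_g f$ is handled identically with $f$ and $g$ swapped and produces the second summand. The resonant term $R(f,g)$, whose summands have Fourier support in a ball of radius $\lesssim 2^k$, can be absorbed into either of the two previous bounds by first reindexing and then summing in the $\ell^q$-index via a discrete Young-type inequality, exactly as in the unweighted scheme of \cite[Prop.~1.2]{Chae:02}.

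The main --- really the only nonroutine --- point will be the availability of the weighted maximal inequalities. Since $\mu \in (\nicefrac{1}{p}, 1]$ forces $p(1-\mu) \in [0, p-1)$, the power weight $t\mapsto |t|^{p(1-\mu)}$ lies in the Muckenhoupt class $A_p$, and the same verification applies to the exponent pairs $(p_i,\tau_i)$ and $(r_i,\sigma_i)$ in view of $\tau_i \in (\nicefrac{1}{p_i},1]$ and $\sigma_i \in (\nicefrac{1}{r_i},1]$. The scalar maximal inequality then comes from \cite[Thm.~V.3.1]{Stein:93}, and its $\ell^q$-valued extension to $\rL^p_\mu(G;\ell^q)$ is standard once scalar $A_p$-bounds are in hand. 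The numerical identities $p(1-\mu)=p_i(1-\tau_i)=r_i(1-\sigma_i)$ are precisely what is required for Hölder's inequality to balance the weight in each of the two splittings, which closes the argument.
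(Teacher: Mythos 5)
Your proposal is correct and follows precisely the route the paper indicates: the paper gives no explicit proof of \autoref{lem:weighted paraprod estimate} but states that it "follows by adjusting the proof of \cite[Proposition~1.2]{Chae:02} to the weighted situation by invoking the continuity of the Hardy-Littlewood maximal operator on weighted $\rL^q$-spaces, see \cite[Theorem~V.3.1]{Stein:93}" --- which is exactly what you carry out (Bony decomposition, weighted Hardy--Littlewood and Fefferman--Stein bounds, Hölder calibrated by the exponent identities). Your added observations that $\mu\in(\nicefrac{1}{p},1]$ forces $p(1-\mu)\in[0,p-1)$ so the power weight is in $A_p$, and that $p(1-\mu)=p_i(1-\tau_i)$ is equivalent to $(1-\tau_1)+(1-\tau_2)=1-\mu$, which is precisely what makes the weighted Hölder step close, are correct and are the two things the paper leaves to the reader.
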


We now estimate the nonlinear boundary terms using the above paraproduct estimate.
Note that the estimates are essential
with regard to the local well-posedness result \autoref{prop:localwellposed} and the underlying optimal data result \autoref{thm:optdata}. 

\begin{lem}\label{lem:est of boundary term}
Let $p,q$ and $\mu$ be as in \autoref{prop:localwellposed} and $T>0$. 
Given $(\vair,\vocn) \in \E_{1,\mu,T}$, consider the boundary term $B(\vair,\vocn) \coloneqq (\vair - \vocn) |\vocn - \vair|$.
Then for $\tau$ with $1-\mu = 2(1-\tau)$, there is $C>0$ with
\begin{equation}\label{eq:est Lp Besov}
    \begin{aligned}
        \| B(\vair,\vocn) \|_{\rLp_\mu(0,T;\rB_{qq}^{1 - \nicefrac{1}{q}}(G))}
        &\le C\bigl(\| \vair \|_{\rL_\tau^{2p}(0,T;\rX_\beta^\air)}^2 + \| \vocn \|_{\rL_\tau^{2p}(0,T;\rX_\beta^\ocn)}^2\bigr)
        \le C\bigl(\| \vair \|_{\E_{1,\mu,\air}}^2 + \| \vocn \|_{\E_{1,\mu,\ocn}}^2\bigr),        
    \end{aligned}
\end{equation}
\begin{equation}\label{eq:est Triebel-Lizorkin Lq}
    \begin{aligned}
        \| B(\vair,\vocn) \|_{\rF_{pq,\mu}^{\nicefrac{1}{2}-\nicefrac{1}{2q}}(0,T;\rLq(G))}
        &\le C\bigl(\| \vair \|_{\rF_{2p q,\tau}^{\nicefrac{1}{2} - \nicefrac{1}{2q}}(0,T;\rW^{\nicefrac{1}{2q},2q}(\Omegaair))} + \| \vocn \|_{\rF_{2p q,\tau}^{\nicefrac{1}{2} - \nicefrac{1}{2q}}(0,T;\rW^{\nicefrac{1}{2q},2q}(\Omegaocn))}\bigr)\\
        &\quad \cdot \bigl(\| \vair \|_{\rL_\tau^{2p}(0,T;\rW^{\nicefrac{1}{2q},2q}(\Omegaair))} + \| \vocn \|_{\rL_\tau^{2p}(0,T;\rW^{\nicefrac{1}{2q},2q}(\Omegaocn))}\bigr) \enspace \text{and}
    \end{aligned}
\end{equation}
\begin{equation}\label{eq:total est boundary term}
    \begin{aligned}
        \| B(\vair,\vocn) \|_{\F_{\mu,\ocn}} \le C \bigl(\| \vair \|_{\E_{1,\mu,\air}}^2 + \| \vocn \|_{\E_{1,\mu,\ocn}}^2\bigr).
    \end{aligned}
\end{equation}
\end{lem}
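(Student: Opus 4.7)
The three estimates share a common backbone: the map $z \mapsto |z|$ is Lipschitz on $\R^2$ and preserves regularity in Besov and Triebel--Lizorkin spaces of order $s \in (0,1)$, so $B(\vair,\vocn)$ may be analyzed as a bilinear expression in $V := \vair|_{\Gamma} - \vocn|_{\Gamma}$ and $|V|$, each factor inheriting the trace regularity of $V$. The principal tool is then the weighted paraproduct estimate of \autoref{lem:weighted paraprod estimate}, applied first in the spatial variable on $G$ and then in the temporal variable with values in $\rLq(G)$, combined with the interior-to-boundary trace embeddings from $\Omegaatm$ and $\Omegaocn$ onto $G$ and the maximal-regularity embeddings of $\E_{1,\mu,\atm}$ and $\E_{1,\mu,\ocn}$.

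\textbf{Spatial estimate \eqref{eq:est Lp Besov}.} Freeze $t \in (0,T)$ and apply \autoref{lem:weighted paraprod estimate} on $G$ with $s = 1-\nicefrac{1}{q}$, placing one factor in $\rB_{qq}^{1-\nicefrac{1}{q}}(G)$ and the other in $\rL^{\infty}(G)$, the latter being justified by $\rB_{qq}^{s}(G)\hookrightarrow \rL^{\infty}(G)$ for $s > \nicefrac{2}{q}$. Combining with the trace embedding $\rH^{2\beta,q}(\Omega)\hookrightarrow \rB_{qq}^{2\beta-\nicefrac{1}{q}}(G)$ for $\beta > \nicefrac{3}{2q}$ yields pointwise in time
\begin{equation*}
    \| B(\vair,\vocn)(\cdot,t) \|_{\rB_{qq}^{1-\nicefrac{1}{q}}(G)} \le C \bigl(\| \vair(\cdot,t) \|_{\rX_\beta^\atm}^2 + \| \vocn(\cdot,t) \|_{\rX_\beta^\ocn}^2 \bigr).
\end{equation*}
Raising to the $p$-th power, integrating with the weight $\tau$ satisfying $1-\mu = 2(1-\tau)$, and applying Hölder with $\nicefrac{1}{p} = \nicefrac{1}{(2p)} + \nicefrac{1}{(2p)}$ in time yields the first inequality of \eqref{eq:est Lp Besov}. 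The second inequality follows from the mixed-derivative embedding $\E_{1,\mu,\atm,\ocn}\hookrightarrow \rL^{2p}_\tau(0,T;\rX_\beta^{\atm,\ocn})$, whose admissibility condition is exactly $\mu \ge \mu_c = \nicefrac{1}{p} + \nicefrac{1}{q}$ (the margin $\eps$ for $q < 2$ absorbs the strict loss in the Sobolev embedding step).

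\textbf{Temporal estimate \eqref{eq:est Triebel-Lizorkin Lq}.} Now apply \autoref{lem:weighted paraprod estimate} in the time variable with values in $\rLq(G)$, splitting the exponents symmetrically via $\nicefrac{1}{p} = \nicefrac{1}{(2p)} + \nicefrac{1}{(2p)}$ and $\nicefrac{1}{q} = \nicefrac{1}{(2q)} + \nicefrac{1}{(2q)}$: one factor lands in $\rF_{2pq,\tau}^{\nicefrac{1}{2}-\nicefrac{1}{2q}}(0,T;\rL^{2q}(G))$ and the other in $\rL_\tau^{2p}(0,T;\rL^{2q}(G))$. The trace $\rW^{\nicefrac{1}{2q},2q}(\Omega)\hookrightarrow \rL^{2q}(G)$ at the critical Sobolev-trace exponent then converts each $\rL^{2q}(G)$ norm into the corresponding $\rW^{\nicefrac{1}{2q},2q}(\Omega)$ norm on $\Omegaatm$ or $\Omegaocn$ appearing in the right-hand side of \eqref{eq:est Triebel-Lizorkin Lq}.

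\textbf{Combination and main obstacle.} Since $\F_{\mu,\ocn} = \rF_{pq,\mu}^{\nicefrac{1}{2}-\nicefrac{1}{2q}}(0,T;\rLq(G;\R^2)) \cap \rLp_\mu(0,T;\rB_{qq}^{1-\nicefrac{1}{q}}(G;\R^2))$, estimate \eqref{eq:total est boundary term} follows by summing \eqref{eq:est Lp Besov} and \eqref{eq:est Triebel-Lizorkin Lq}, once the right-hand side of \eqref{eq:est Triebel-Lizorkin Lq} is absorbed into $\|\vair\|_{\E_{1,\mu,\atm}}^2 + \|\vocn\|_{\E_{1,\mu,\ocn}}^2$. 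This absorption is the principal obstacle, since it amounts to establishing the sharp mixed-regularity embedding
\begin{equation*}
    \E_{1,\mu}\hookrightarrow \rF_{2pq,\tau}^{\nicefrac{1}{2}-\nicefrac{1}{2q}}(0,T;\rW^{\nicefrac{1}{2q},2q}(\Omega)) \cap \rL_\tau^{2p}(0,T;\rW^{\nicefrac{1}{2q},2q}(\Omega))
\end{equation*}
in weighted anisotropic time-space Triebel--Lizorkin scales. The exponent $\mu_c = \nicefrac{1}{p} + \nicefrac{1}{q}$ is precisely critical for this embedding, which explains why the restriction on $(p,q,\mu)$ in \autoref{prop:localwellposed} is essential and not merely technical. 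Once this embedding is in place, the three bounds close.
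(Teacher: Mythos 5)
Your treatment of \eqref{eq:est Triebel-Lizorkin Lq} — applying the weighted paraproduct in time with values in $\rLq(G)$, splitting $\nicefrac{1}{p} = \nicefrac{1}{2p}+\nicefrac{1}{2p}$, $\nicefrac{1}{q} = \nicefrac{1}{2q}+\nicefrac{1}{2q}$, and then tracing $\rW^{\nicefrac{1}{2q},2q}(\Omega) \hra \rL^{2q}(G)$ — matches the paper. There are, however, two real gaps. First, for \eqref{eq:est Lp Besov} you invoke \autoref{lem:weighted paraprod estimate} with one factor in $\rL^\infty(G)$, but the lemma is stated only for $p_1,r_1 \in (1,\infty)$; the $\infty$-endpoint is not covered by it. Worse, the $\rL^\infty$-trace bound you need forces $\beta > \nicefrac{3}{2q}$, which at $q=2$ gives $\beta > \nicefrac{3}{4}$ strictly, whereas the mixed-derivative embedding $\E_{1,\mu_c}\hra \rL_\tau^{2p}(\rX_\beta)$ at the critical weight permits only $\beta \le \nicefrac{3}{4}$: your splitting is lossy in exactly the critical regime the lemma is designed for. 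The paper instead splits with Hölder-conjugate exponents $rq$ and $r'q$ (some $r' \in (\nicefrac{2}{q},2)$), placing the two factors in $\rL^{rq}(G)$ and $\rH^{1-\nicefrac{1}{q}+\eps,r'q}(G)$, and uses the embedding $\rF_{q2}^{1-\nicefrac{1}{q}+\eps}(G)\hra \rB_{qq}^{1-\nicefrac{1}{q}}(G)$; this keeps the argument inside \autoref{lem:weighted paraprod estimate} and is calibrated to the critical $\beta$.

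Second, and this is the larger issue: for \eqref{eq:total est boundary term} you correctly identify that everything reduces to the embedding
\begin{equation*}
\E_{1,\mu} \hra \rF_{2pq,\tau}^{\nicefrac{1}{2}-\nicefrac{1}{2q}}(0,T;\rW^{\nicefrac{1}{2q},2q}(\Omega)) \cap \rL_\tau^{2p}(0,T;\rW^{\nicefrac{1}{2q},2q}(\Omega)),
\end{equation*}
but you then declare this the "principal obstacle" and stop, asserting that "once this embedding is in place, the three bounds close." That embedding \emph{is} the technical content of the lemma; flagging it without proof leaves the argument incomplete precisely where it matters. The paper establishes it in several steps: the mixed-derivative theorem gives $\E_{1,\mu}\hra \rH_\mu^{\theta,p}(0,T;\rH^{2(1-\theta),q}(\Omega))$; the Bessel-potential-to-Triebel–Lizorkin embedding $\rH_\mu^{\theta,p}(0,T;\rH^{\sigma,q}(\Omega)) \hra \rF_{pq,\mu}^{\theta}(0,T;\rH^{\sigma,q}(\Omega))$ (valid only for $q\ge 2$, so a separate argument via $\rF_{p2}^{\theta-\eps}$ is needed when $q<2$); a spatial Sobolev embedding forcing $\theta \le 1-\nicefrac{1}{q}$; and the Meyries–Veraar embedding $\rF_{pq,\mu}^\theta \hra \rF_{2pq,\tau}^{\nicefrac{1}{2}-\nicefrac{1}{2q}}$ from \cite{MV:12}, whose parameter window is nonempty exactly because $\mu_c = \nicefrac{1}{p}+\nicefrac{1}{q}\le 1$. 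Without this bookkeeping — in particular the existence of an admissible $\theta$, and the $q\ge 2$ versus $q<2$ dichotomy — the proof does not close.
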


\begin{proof}[Proof of \autoref{lem:est of boundary term}]
Recall the notation $V \coloneqq \vair|_{\Gammaairb} - \vocn|_{\Gau}$ as introduced in \eqref{eq:V}.
Hence, $B(\vair,\vocn)$ can be rewritten as $B(V) \coloneqq B(\vair,\vocn) = V |V|$.
As the embeddings for the atmospheric and the oceanic terms work similarly, we will omit the respective sub- and superscripts if a distinction is not strictly necessary.

We start with \eqref{eq:est Lp Besov} and focus on the product in the spatial norm. By \autoref{lem:weighted paraprod estimate} for $\mu = 1$, we obtain
\begin{equation}
    \begin{aligned}\label{eq:estimateBbesov}
        \| V \cdot |V| \|_{\rB_{qq}^{1-\nicefrac{1}{q}}(G)} 
        &\le \| V \cdot |V| \|_{\rF_{q2}^{1-\nicefrac{1}{q} + \eps}(G)} 
        \le C \| V \|_{\rL^{rq}(G)} \| V \|_{\rH^{1-\nicefrac{1}{q} + \eps,r'q}(G)}\\
        &\le C\bigl(\| \vair \|_{\rW^{\nicefrac{1}{rq},rq}(\Omegaair)} + \| \vocn \|_{\rW^{\nicefrac{1}{rq},rq}(\Omegaocn)}\bigr) \\
        &\quad \cdot \bigl(\| \vair \|_{\rW^{1-\nicefrac{1}{q}+2\eps + \nicefrac{1}{r'q},r'q}(\Omegaair)} + \| \vocn \|_{\rW^{1-\nicefrac{1}{q}+2\eps + \nicefrac{1}{r'q},r'q}(\Omegaocn)}\bigr),
    \end{aligned}
\end{equation}
where $\frac{1}{r} + \frac{1}{r'} = 1$.
Note that $\rX_{\beta} \hra \rH^{1+\nicefrac{1}{q},q}(\Omega;\R^2)$. Thus, there  exists $r' \in (\nicefrac{2}{q},2)$ so that the 
embedding~$\rX_\beta \hra \rW^{\nicefrac{1}{rq},rq}(\Omega;\R^2) \cap \rW^{1-\nicefrac{1}{q}+2\eps + \nicefrac{1}{r'q},r'q}(\Omega;\R^2)$ is valid
for $\eps > 0$ sufficiently small.
On the other hand, for $1 - \mu = 2(1-\tau)$, we estimate $\| f \cdot g \|_{\rLp_\mu(0,T)} \le \| f \|_{\rL_{\tau}^{2p}(0,T)} \| g \|_{\rL_{\tau}^{2p}(0,T)}$.
Finally, the mixed derivative theorem implies $\E_{1,\mu}(0,T) \hra \rL_\tau^{2p}(0,T;\rX_\beta)$, yielding \eqref{eq:est Lp Besov}. 

Using \autoref{lem:weighted paraprod estimate} we obtain
\begin{equation}\label{eq:estimatebtriebellizorkin}
    \begin{aligned}
        \| B(V) \|_{\rF_{pq,\mu}^{\nicefrac{1}{2}-\nicefrac{1}{2q}}(0,T;\rLq(G))}
        &\le C\bigl(\| \vair \|_{\rF_{2p q,\tau}^{\nicefrac{1}{2} - \nicefrac{1}{2q}}(0,T;\rW^{\nicefrac{1}{2q},2q}(\Omegaair))} + \| \vocn \|_{\rF_{2p q,\tau}^{\nicefrac{1}{2} - \nicefrac{1}{2q}}(0,T;\rW^{\nicefrac{1}{2q},2q}(\Omegaocn))}\bigr)\\
        &\quad \cdot \bigl(\| \vair \|_{\rL_\tau^{2p}(0,T;\rW^{\nicefrac{1}{2q},2q}(\Omegaair))} + \| \vocn \|_{\rL_\tau^{2p}(0,T;\rW^{\nicefrac{1}{2q},2q}(\Omegaocn))}\bigr),
    \end{aligned}
\end{equation}
where $1-\mu = 2(1-\tau)$. This yields the estimate \eqref{eq:est Triebel-Lizorkin Lq}.
 
To conclude \eqref{eq:total est boundary term}, note first that
\begin{equation}\label{eq:rel emb Bessel pot Triebel-Lizorkin}
    \rH_\mu^{s,p}(0,T;\rH^{\theta,q}(\Omega)) \hra \rF_{pq,\mu}^s(0,T;\rH^{\theta,q}(\Omega))
\end{equation}
is valid for $s, \theta \in \R$, $p \in (1,\infty)$ and $q \in [2,\infty)$.
A similar reverse embedding holds true for $q \in (1,2]$.
Moreover, observe that  $\rX_\beta \hra \rW^{\nicefrac{1}{2q},2q}(\Omega;\R^2)$, so $\E_{1,\mu} \hra \rL_\tau^{2p}(0,T;\rW^{\nicefrac{1}{2q},2q}(\Omega;\R^2))$. Concerning the first factor in \eqref{eq:estimatebtriebellizorkin}, we start with $q \ge 2$ and deduce from the mixed derivative theorem and \eqref{eq:rel emb Bessel pot Triebel-Lizorkin} that
\begin{equation}\label{eq:emb max reg space into Triebel-Lizorkin}
    \E_{1,\mu} \hra \rH_\mu^{\theta,p}(0,T;\rH^{2(1-\theta),q}(\Omega;\R^2)) \hra \rF_{pq,\mu}^{\theta}(0,T;\rW^{2(1-\theta),q}(\Omega;\R^2)).
\end{equation}
As we require that $\rW^{2(1-\theta),q}(\Omega) \hra \rW^{\nicefrac{1}{2q},2q}(\Omega)$, we demand that $\theta \le 1 - \frac{1}{q}$.
On the other hand, we exploit~\cite[Theorem~1.2]{MV:12} to get the embedding $\rF_{pq,\mu}^{\theta}(0,T;\rX) \hra \rF_{2pq,\tau}^{\nicefrac{1}{2} - \nicefrac{1}{2q}}(0,T;\rX)$.
The assumption on the critical weight $\mu_c = \frac{1}{p} + \frac{1}{q} \le 1$ guarantees the existence of such $\theta \in (0,1)$.
If $q < 2$, for $\eps > 0$ sufficiently small, we get $\rH_\mu^{\theta,p}(0,T;\rH^{2(1-\theta),q}(\Omega)) \hra \rF_{p2}^{\theta - \eps}(0,T;\rH^{2(1-\theta),q}(\Omega))$.
In conjunction with the mixed derivative theorem, this leads to
\begin{equation*}
    \E_{1,\mu} \hra \rF_{p2,\mu}^{\theta-\eps}(0,T;\rW^{2(1-\theta)- \eps,q}(\Omega;\R^2)) \hra \rF_{2pq,\tau}^{\nicefrac{1}{2} - \nicefrac{1}{2q}}(0,T;\rW^{\nicefrac{1}{2q},2q}(\Omega;\R^2)),
\end{equation*}
implying \eqref{eq:total est boundary term}.
\end{proof}

\begin{proof}[Proof of \autoref{prop:localwellposed}]
We start with a  reference solution $(\vair_*,\omegaair_*,\Phi_*,\vocn_*,\wocn_*,\pi_*)$ to \eqref{eq:CAO}, with right-hand sides $\fair$ and $\focn$ as well as with initial values 
$(\vair_*,\vocn_*)|_{t=0} =(\vair_0, \vocn_0)$.
The resulting system is decoupled, and the existence of the reference solution is guaranteed in view of the maximal regularity of the hydrostatic Stokes operator, see \cite{GGHHK:17}.
With the prescript $_0$ indicating homogeneous initial values again, we set
\begin{equation*}
    \mathbb{B}_r \coloneqq \{(\vair,\vocn) \in \E_{1,\mu,T} : (\vair,\vocn) - (\vair_*,\vocn_*) \in \prescript{}{0}{\E_{1,\mu,T}} \enspace \text{and} \enspace \| (\vair,\vocn) - (\vair_*,\vocn_*) \|_{\E_{1,\mu,T}} \le r\}.
\end{equation*}
For $(\vair_1,\vocn_1) \in \mathbb{B}_r$, we denote by $(\vair,\vocn) \coloneqq \Psi(\vair_1,\vocn_1) \in \E_{1,\mu,T}$ the solution to the linearized system \eqref{eq:CAO}, or, 
equivalently, to \eqref{eq:CAOsimplified}, where the respective right-hand sides for the primitive equations are given by
\begin{equation*}
    \begin{aligned}
        \fair + G^\air(\vair_1,\vair_1) \enspace \text{and} \enspace \focn + G^\ocn(\vocn_1,\vocn_1), \enspace \text{with} \enspace
        G^\air(\vair_1,\vair_2) = -(\vair_1 \cdot \nablaH \vair_2 + \omegaair(\vair_1) \cdot \partial_p \vair_2),
    \end{aligned}
\end{equation*}
and $G^\ocn(\vocn_1,\vocn_2)$ takes an analogous shape. Moreover, using the notational simplification $p_s \equiv 1$, the interface  conditions denoted by $\pm B(\vair_1,\vocn_1)$ are given by
\begin{equation*}
    \partial_p \vair = (\vocn_1 - \vair_1) |\vocn_1 - \vair_1| \enspace \text{and} \enspace \partial_z \vocn = (\vair_1 - \vocn_1) |\vair_1 - \vocn_1|.
\end{equation*}
The assumptions on $(\fair,\focn) \in \E_{0,\mu,T}$ and $(\vair_0,\vocn_0)$
together with the continuity of $G^\air \colon \rX_\beta^\air \times \rX_\beta^\air \to \rLq_{\sigmabar}(\Omegaair)$ and $G^\ocn \colon \rX_\beta^\ocn \times \rX_\beta^\ocn \to 
\rLq_{\sigmabar}(\Omegaocn)$, see \cite[Section~6]{GGHHK:20}, as well as \eqref{eq:total est boundary term} yield that $(\vair,\vocn) = \Psi(\vair_1,\vocn_1)$ is well-defined for 
$(\vair_1,\vocn_1) \in \mathbb{B}_r$. 

We observe next that $(\vair - \vair_*,\vocn - \vocn_*)$ solves \eqref{eq:CAO}, or, equivalently, \eqref{eq:CAOsimplified}, with terms on the right-hand side 
$G^\air(\vair_1,\vair_1)$ and $G^\ocn(\vocn_1,\vocn_1)$, boundary conditions $-B(\vair_1,\vocn_1)$ and $B(\vair_1,\vocn_1)$, and homogeneous initial values, i.\ e., $(\vatm -\vatm_\ast , \vocn - \vocn_\ast)|_{t=0} = 0$. 

It remains to show that $(\vair,\vocn) = \Psi(\vair_1,\vocn_1) \in \mathbb{B}_r$, and that $\Psi$ is a contraction. Since $(\vair,\vocn) \in \E_{1,\mu,T}$ yields 
$(\vair - \vair_*,\vocn - \vocn_*) \in \prescript{}{0}{\E_{1,\mu,T}}$, we only need to show $\| (\vair - \vair_*,\vocn - \vocn_*) \|_{\E_{1,\mu,T}} \le r$.
\autoref{thm:optdata} implies 
\begin{equation}\label{eq:optimal data est}
    \begin{aligned}
        &\quad \| (\vair - \vair_*,\vocn - \vocn_*) \|_{\E_{1,\mu,T}}\\
        &\le C\bigl(\| G^\air(\vair_1,\vair_1) \|_{\E_{0,\mu,\air}} + \| -B(\vair_1,\vocn_1) \|_{\F_{\mu,\air}}
         + \| G^\ocn(\vocn_1,\vocn_1) \|_{\E_{0,\mu,\ocn}} + \| B(\vair_1,\vocn_1) \|_{\F_{\mu,\ocn}}\bigr).
    \end{aligned}
\end{equation}
For simplicity of notation, we omit the respective sub- and superscripts in the sequel. We then find that
\begin{equation}\label{eq:est of bilinearity}
    \| G(v_1,v_1) \|_{\E_{0,\mu}} \le C(\| v_* \|_{\rL_\tau^{2p}(0,T;\rX_\beta)}^2 + \| v_1 - v_* \|_{\E_{1,\mu}}^2) \le C(\| v_* \|_{\rL_\tau^{2p}(0,T ;\rX_\beta)}^2 + r^2),
\end{equation}
where the constant $C > 0$ is time-independent. Observing the relation $v_* \in \rL_\tau^{2p}(0,T;\rX_\beta)$ for the reference solution $v_*$, we can make the corresponding term in \eqref{eq:est of bilinearity} arbitrarily 
small by choosing $T>0$ small enough.

Concerning the estimate of the boundary term, we deduce first from \eqref{eq:est Lp Besov} that
\begin{equation}\label{eq:est of first part of boundary norm}
    \begin{aligned}
        \| B(\vair_1,\vocn_1)& \|_{\rLp_\mu(0,T;\rB_{qq}^{1 - \nicefrac{1}{q}}(G))}
        \le C\bigl(\| \vair_1 \|_{\rL_\tau^{2p}(0,T;\rX_\beta^\air)}^2 + \| \vocn_1 \|_{\rL_\tau^{2p}(0,T;\rX_\beta^\ocn)}^2\bigr)\\
        &\le C\bigl(\| \vair_1 - \vair_* \|_{\rL_\tau^{2p}(0,T;\rX_\beta^\air)}^2 + \| \vair_* \|_{\rL_\tau^{2p}(0,T;\rX_\beta^\air)}^2
         + \| \vocn_1 - \vocn_* \|_{\rL_\tau^{2p}(0,T;\rX_\beta^\ocn)}^2 + \| \vocn_* \|_{\rL_\tau^{2p}(0,T;\rX_\beta^\ocn)}^2\bigr)\\
        &\le C\bigl(r^2 + \| \vair_* \|_{\rL_\tau^{2p}(0,T;\rX_\beta^\air)}^2 + \| \vocn_* \|_{\rL_\tau^{2p}(0,T;\rX_\beta^\ocn)}^2\bigr),
    \end{aligned}
\end{equation}
where $C>0$ is independent of time.
The second and the third term on the right-hand side of \eqref{eq:est of first part of boundary norm} decrease to zero as $T \to 0$. Furthermore, \eqref{eq:est Triebel-Lizorkin Lq} implies that
\begin{equation*}
    \begin{aligned}
        \| B(\vair_1,\vocn_1) \|_{\rF_{pq,\mu}^{\nicefrac{1}{2}-\nicefrac{1}{2q}}(0,T;\rLq(G))}
        &\le C\bigl(\| \vair_1 \|_{\rF_{2p q,\tau}^{\nicefrac{1}{2} - \nicefrac{1}{2q}}(0,T;\rW^{\nicefrac{1}{2q},2q}(\Omegaair))} + \| \vocn_1 \|_{\rF_{2p q,\tau}^{\nicefrac{1}{2} - \nicefrac{1}{2q}}(0,T;\rW^{\nicefrac{1}{2q},2q}(\Omegaocn))}\bigr)\\
        &\quad \cdot \bigl(\| \vair_1 \|_{\rL_\tau^{2p}(0,T;\rW^{\nicefrac{1}{2q},2q}(\Omegaair))} + \| \vocn_1 \|_{\rL_\tau^{2p}(0,T;\rW^{\nicefrac{1}{2q},2q}(\Omegaocn))}\bigr).
    \end{aligned}
\end{equation*}
Similarly as in \eqref{eq:est of first part of boundary norm}, we see that 
\begin{equation}\label{eq:est of second factor of second part of boundary norm}
    \begin{aligned}
        \| \vair_1 \|_{\rL_\tau^{2p}(0,T;\rW^{\nicefrac{1}{2q},2q}(\Omegaair))} + \| \vocn_1 \|_{\rL_\tau^{2p}(0,T;\rW^{\nicefrac{1}{2q},2q}(\Omegaocn))}
        &\le C\bigl(r + \| \vair_* \|_{\rL_\tau^{2p}(0,T;\rX_\beta^\air)} + \| \vocn_* \|_{\rL_\tau^{2p}(0,T;\rX_\beta^\ocn)}\bigr).
    \end{aligned}
\end{equation}
We now estimate as in the proof of \autoref{lem:est of boundary term}
\begin{equation}\label{eq:est of first factor of second part of boundary norm}
    \begin{aligned}
         &\quad \| \vair_1 \|_{\rF_{2p q,\tau}^{\nicefrac{1}{2} - \nicefrac{1}{2q}}(0,T;\rW^{\nicefrac{1}{2q},2q}(\Omegaair))}  + 
        \| \vocn_1 \|_{\rF_{2p q,\tau}^{\nicefrac{1}{2} - \nicefrac{1}{2q}}(0,T;\rW^{\nicefrac{1}{2q},2q}(\Omegaocn))}\\
        &\le \| \vair_1 - \vair_* \ \|_{\rF_{2p q,\tau}^{\nicefrac{1}{2} - \nicefrac{1}{2q}}(0,T;\rW^{\nicefrac{1}{2q},2q}(\Omegaair))} + \| \vair_* \ \|_{\rF_{2p q,\tau}^{\nicefrac{1}{2} - \nicefrac{1}{2q}}(0,T;\rW^{\nicefrac{1}{2q},2q}(\Omegaair))}\\
        &\quad + \| \vocn_1 - \vocn_* \ \|_{\rF_{2p q,\tau}^{\nicefrac{1}{2} - \nicefrac{1}{2q}}(0,T;\rW^{\nicefrac{1}{2q},2q}(\Omegaocn))} + \| \vocn_* \ \|_{\rF_{2p q,\tau}^{\nicefrac{1}{2} - \nicefrac{1}{2q}}(0,T;\rW^{\nicefrac{1}{2q},2q}(\Omegaocn))}\\
        &\le C\bigl(\| \vair_1 - \vair_* \|_{\E_{1,\mu,\air}} + \| \vair_* \ \|_{\rF_{2p q,\tau}^{\nicefrac{1}{2} - \nicefrac{1}{2q}}(0,T;\rW^{\nicefrac{1}{2q},2q}(\Omegaair))}\\
        &\quad + \| \vocn_1 - \vocn_* \|_{\E_{1,\mu,\ocn}} + \| \vocn_* \ \|_{\rF_{2p q,\tau}^{\nicefrac{1}{2} - \nicefrac{1}{2q}}(0,T;\rW^{\nicefrac{1}{2q},2q}(\Omegaocn))}\bigr)\\
        &\le C\bigl(r + \| \vair_* \ \|_{\rF_{2p q,\tau}^{\nicefrac{1}{2} - \nicefrac{1}{2q}}(0,T;\rW^{\nicefrac{1}{2q},2q}(\Omegaair))} + \| \vocn_* \ \|_{\rF_{2p q,\tau}^{\nicefrac{1}{2} - \nicefrac{1}{2q}}(0,T;\rW^{\nicefrac{1}{2q},2q}(\Omegaocn))}\bigr)
    \end{aligned}
\end{equation}
for $C>0$ independent of $T$.
By \eqref{eq:emb max reg space into Triebel-Lizorkin}, we have $\vair_* \in \rF_{2p q,\tau}^{\nicefrac{1}{2} - \nicefrac{1}{2q}}(0,T;\rW^{\nicefrac{1}{2q},2q}(\Omegaair))$ and likewise for $\vocn_*$.
Combining \eqref{eq:est of bilinearity}, \eqref{eq:est of first part of boundary norm}, \eqref{eq:est of second factor of second part of boundary norm} and \eqref{eq:est of first factor of second part of boundary norm}, and choosing $r>0$ and $T>0$ sufficiently small, we conclude from \eqref{eq:optimal data est} that $\Psi$ is a self-map, i.\ e., $\Psi \colon \mathbb{B}_r \to \mathbb{B}_r$. The contraction property follows in the same way.
\end{proof}

\subsection{Blow-up criterion}\label{ssec:blow-up}
\
We now formulate a blow-up criterion which will be of central importance for the proof of global existence.
		
\begin{prop}\label{thm:blowup}
Let $p,q \in (1,\infty)$, $\mu_c = \frac{1}{p} + \frac{1}{q} \leq 1$, assume  $2 \le q \le p$, and take into account initial data~$v_0 = (\vair_0,\vocn_0) \in \rB_{qp,\sigmabar}^{\nicefrac{2}{q}}(\Omegaatm) \times \rB_{qp,\sigmabar}^{\nicefrac{2}{q}}(\Omegaocn)$. Let 
$(\vair,\vocn) \in \E_{1,\mu,T}$ be the unique, local strong solution to~\eqref{eq:CAO} obtained by \autoref{prop:localwellposed}.

Then the solution $(\vair,\vocn)$ exists on a maximal time interval $[0,t_+)$, and $t_+ < \infty$ implies that $(\vair,\vocn)$ satisfies $(\vair,\vocn) \notin \rL^p((0,t_+);\rX_{\mu_c}^\air \times \rX_{\mu_c}^\ocn)$.
\end{prop}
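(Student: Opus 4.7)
The plan is to argue by the standard contradiction scheme for maximal regularity blow-up criteria. First I would define $t_+ \in (0,\infty]$ as the supremum of those $T > 0$ for which \autoref{prop:localwellposed} produces a strong solution in $\E_{1,\mu_c,T}$; the uniqueness statement lets me glue all such local solutions into one maximal solution on $[0,t_+)$. Assume for contradiction that $t_+ < \infty$ while $(\vair,\vocn) \in \rL^p((0,t_+);\rX_{\mu_c}^\air \times \rX_{\mu_c}^\ocn)$. The goal is to upgrade this critical-norm information to the maximal regularity norm on the full interval $[0,t_+)$, which will allow me to evaluate the solution at $t_+$ in the trace space $\rB^{2/q}_{qp,\sigmabar}$ and restart the local well-posedness there, contradicting the choice of $t_+$.

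The core technical step is an a-priori estimate in $\E_{1,\mu_c}$ driven only by the critical norm. By absolute continuity of the integral I can choose a partition $0=\tau_0<\tau_1<\dots<\tau_N=t_+$ making $\| (\vair,\vocn) \|_{\rL^p((\tau_j,\tau_{j+1});\rX_{\mu_c})} < \epsilon$ for a small $\epsilon>0$ to be fixed later. On each sub-interval \autoref{cor:main} (applied modulo the usual reference-solution subtraction) yields
\begin{equation*}
    \| (\vair,\vocn) \|_{\E_{1,\mu_c,(\tau_j,\tau_{j+1})}}
    \le C\bigl(\| (\vair(\tau_j),\vocn(\tau_j)) \|_{\rB^{2/q}_{qp,\sigmabar}} + \| (\fair,\focn) \|_{\E_{0,\mu_c}} + \| G(v) \|_{\E_{0,\mu_c}} + \| B(\vair,\vocn) \|_{\F_{\mu_c}}\bigr).
\end{equation*}
The two nonlinear contributions must now be bounded bilinearly, with one factor measured in the critical norm $\rL^p(\rX_{\mu_c})$ and the other in the full maximal regularity norm. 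For the transport nonlinearity this is a classical scale-invariant estimate: since $2\mu_c = 2/p + 2/q$ is exactly the critical Sobolev exponent for which $v\cdot\nabla v \in \rL^q$, H\"older together with Sobolev embedding on $\rX_{\mu_c} = \rH^{2\mu_c,q}$ gives $\| G^\atm(\vair,\vair) \|_{\E_{0,\mu_c}} \le C \| \vair \|_{\rL^p(\rX_{\mu_c}^\air)} \| \vair \|_{\E_{1,\mu_c,\atm}}$ and similarly for $G^\ocn$.

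The main obstacle is the corresponding mixed estimate for the interface term $B(\vair,\vocn) = V|V|$, since it must live in the exact Triebel--Lizorkin space $\F_{\mu_c}$ prescribed by the optimal data characterization. I would obtain it by revisiting the proof of \autoref{lem:est of boundary term}, applying the weighted paraproduct inequality \autoref{lem:weighted paraprod estimate} so as to place one factor of $V$ in a time-space norm dominated by $\rL^p((0,t_+);\rX_{\mu_c})$ (via the trace embedding $\rX_{\mu_c} \hookrightarrow \rW^{1/(2q),2q}(G)$) and leaving the second factor in the Triebel--Lizorkin norm supplied by $\E_{1,\mu_c}$ through the anisotropic embedding used in \eqref{eq:emb max reg space into Triebel-Lizorkin}; the hypothesis $2 \le q \le p$ precisely ensures the validity of that embedding chain. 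This yields $\| B(\vair,\vocn) \|_{\F_{\mu_c,(\tau_j,\tau_{j+1})}} \le C\| (\vair,\vocn)\|_{\rL^p((\tau_j,\tau_{j+1});\rX_{\mu_c})} \cdot \| (\vair,\vocn)\|_{\E_{1,\mu_c,(\tau_j,\tau_{j+1})}}$. Choosing $\epsilon$ with $C\epsilon<1/2$ lets me absorb the nonlinear terms on the left-hand side of the a-priori estimate on each sub-interval.

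Iterating over the finitely many sub-intervals produces a finite bound on $\| (\vair,\vocn) \|_{\E_{1,\mu_c,t_+}}$. The trace embedding $\E_{1,\mu_c,t_+} \hookrightarrow \rC([0,t_+]; \rB^{2/q}_{qp,\sigmabar})$ then provides a well-defined limit $(\vair(t_+),\vocn(t_+))$ in the initial-data space. Applying \autoref{prop:localwellposed} with this new initial datum yields a strong solution on some $[t_+, t_+ + \delta]$ which, by uniqueness, glues to the original solution and extends it beyond $t_+$. This contradicts the maximality of $t_+$ and establishes the blow-up criterion.
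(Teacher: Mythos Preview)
Your proposal is correct and follows the same contradiction-and-absorption scheme as the paper. The only differences are that the paper works on a single small interval near $t_+$ rather than partitioning all of $(0,t_+)$, and derives the key bilinear estimates for both $G$ and $B$ via the interpolation embedding $(\rL^p(0,T;\rX_{\mu_c}),\E_{1,\mu_c})_{\nicefrac{1}{2}} \hookrightarrow \rL^{2p}_\tau(0,T;\rX_\beta)$ (with $\tau=\nicefrac{(1+\mu_c)}{2}$ and $2\beta = 1+\nicefrac{1}{q}$), placing each factor symmetrically in $\rL^{2p}_\tau(\rX_\beta)$ rather than the asymmetric splitting you describe.
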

		
\begin{proof}
To simplify the notation, we again omit the sub- and superscripts.
The mixed derivative theorem and Sobolev embeddings yield
\begin{equation}
	\begin{aligned}
		(\rLp(0,T; \rX_{\mu_c}),\E_{1,\mu_c})_{\nicefrac{1}{2}} 
        \hra \rH^{\nicefrac{(1-\alpha)}{2},p}_{\tau}(0,T ; \rH^{\mu_c + \alpha,q}(\Omega;\R^2)) \hookrightarrow \rL^{2p}_\tau (0,T ; \rH^{\mu_c + 1 -\nicefrac{1}{p},q}(\Omega;\R^2)),
	\end{aligned}
\label{eq:emeddingmaxreg}
\end{equation}
where $\alpha = 1 -\frac{1}{p}$ and $\tau = \frac{1+\mu_c}{2}$.
Now, suppose $t_+ < \infty$, and let $T_0 \in (0,t_+)$ be fixed. \autoref{thm:optdata} yields
\begin{equation}\label{eq:max reg estimate blow-up}
    \begin{aligned}
        \| (\vair,\vocn) \|_{\E_{1,\mu_c}(T_0,T)}
        &\leq M \bigl( \| \vair(T_0) \|_{\rB_{qp,\sigmabar}^{\nicefrac{2}{q}}(\Omegaatm)} + \| \fair + G^\air(\vair,\vair) \|_{\E_{0,\mu_c,\air}} + \| B(\vair,\vocn) \|_{\F_{\mu,\air}} \\
        & \quad + \| \vocn(T_0) \|_{\rB_{qp,\sigmabar}^{\nicefrac{2}{q}}(\Omegaocn)} + \| \focn + G^\ocn(\vocn,\vocn) \|_{\E_{0,\mu_c,\ocn}} + \| B(\vair,\vocn) \|_{\F_{\mu,\ocn}}\bigr).
    \end{aligned}
\end{equation}

Recalling the continuity of $G \colon \rX_\beta \times \rX_\beta \to \rX_0$, we derive from \eqref{eq:emeddingmaxreg} the estimate 
\begin{equation}\label{eq:est rhs blow-up}
    \| G(v,v) \|_{\E_{1,\mu_c}(T_0,T)} \le C \| v \|_{\rL_\tau^{2p}(T_0,T;\rX_\beta)}^2 \le C \| v \|_{\rLp(T_0,T;\rX_{\mu_c})} \| v \|_{\E_{1,\mu_c}(T_0,T)},
\end{equation}
where $C > 0$ is independent of $T \in (T_0,t_+)$.
It remains to consider the boundary terms.
For the first one, exploiting \eqref{eq:est Lp Besov} as well as \eqref{eq:emeddingmaxreg}, we obtain
\begin{equation}\label{eq:est Besov norm blow-up}
    \begin{aligned}
        &\quad \| B(\vair,\vocn) \|_{\rLp_{\mu_c}(T_0,T;\rB_{qq}^{1 - \nicefrac{1}{q}}(G))}\\
        & \le  C\bigl(\| \vair \|_{\rL_\tau^{2p}(0,T;\rX_\beta^\air)}^2 + \| \vocn \|_{\rL_\tau^{2p}(0,T;\rX_\beta^\ocn)}^2\bigr)\\
        &\le C\bigl(\| \vair \|_{\rLp(T_0,T;\rX_{\mu_c}^\air)} \| \vair \|_{\E_{1,\mu_c,\air}(T_0,T)}
        + \| \vocn \|_{\rLp(T_0,T;\rX_{\mu_c}^\ocn)} \| \vocn \|_{\E_{1,\mu_c,\ocn}(T_0,T)}\bigr).
    \end{aligned}
\end{equation}

The treatment of the second term is more involved. 
Invoking \eqref{eq:est of second factor of second part of boundary norm} as well as \eqref{eq:emeddingmaxreg}, we find that the second factor of 
\eqref{eq:est Triebel-Lizorkin Lq} satisfies
\begin{equation}\label{eq:est second factor Triebel-Lizorkin}
    \begin{aligned}
         \| \vair \|_{\rL_\tau^{2p}(T_0,T;\rW^{\nicefrac{1}{2q},2q}(\Omegaair))} & + \| \vocn \|_{\rL_\tau^{2p}(T_0,T;\rW^{\nicefrac{1}{2q},2q}(\Omegaocn))}\\
        &\le C(\| \vair \|_{\rLp(T_0,T;\rX_{\mu_c}^\air)}^{\nicefrac{1}{2}} \| \vair \|_{\E_{1,\mu_c,\air}(T_0,T)}^{\nicefrac{1}{2}} + \| \vocn \|_{\rLp(T_0,T;\rX_{\mu_c}^\ocn)}^{\nicefrac{1}{2}} 
\| \vocn \|_{\E_{1,\mu_c,\ocn}(T_0,T)}^{\nicefrac{1}{2}}).
    \end{aligned}
\end{equation}
For the first factor of \eqref{eq:est Triebel-Lizorkin Lq}, we deduce from \eqref{eq:emeddingmaxreg} and \eqref{eq:rel emb Bessel pot Triebel-Lizorkin} joint with the assumption $q \ge 2$ that
\begin{equation*}
    (\rLp(0,T; \rX_{\mu_c}),\E_{1,\mu_c})_{\nicefrac{1}{2}} \hra \rH^{\nicefrac{(1-\alpha)}{2},p}_{\tau}(0,T ; \rH^{\mu_c + \alpha,q}(\Omega;\R^2)) \hra \rF_{pq,\tau}^{\nicefrac{(1-\alpha)}{2}}(0,T;\rH^{\nicefrac{1}{p} + \nicefrac{1}{q} + \alpha,q}(\Omega;\R^2))
\end{equation*}
is valid for all $\alpha \in (0,1)$. 
The latter space is supposed to embed into $\rF_{2p q,\tau}^{\nicefrac{1}{2} - \nicefrac{1}{2q}}(0,T;\rW^{\nicefrac{1}{2q},2q}(\Omega;\R^2))$.
For the time component, this requires $\alpha \le \frac{1}{q} - \frac{1}{p}$.
On the other hand, for the spatial component, we obtain the embedding $\rH^{\nicefrac{1}{p} + \nicefrac{1}{q} + \alpha,q}(\Omega;\R^2) \hra \rW^{\nicefrac{1}{p} + \nicefrac{1}{q} + \alpha,q}(\Omega;\R^2) 
\hra \rW^{\nicefrac{1}{2q},2q}(\Omega;\R^2)$ provided $\alpha \ge \frac{1}{q} - \frac{1}{p}$.
Hence, choosing $\alpha = \frac{1}{q} - \frac{1}{p}$, we see that $\alpha \in (0,1)$. Consequently, 
\begin{equation}\label{eq:est first factor Triebel-Lizorkin}
    \begin{aligned}
 &\quad \| \vair \|_{\rF_{2p q,\tau}^{\nicefrac{1}{2} - \nicefrac{1}{2q}}(T_0,T;\rW^{\nicefrac{1}{2q},2q}(\Omegaair))} + \| \vocn \|_{\rF_{2p q,\tau}^{\nicefrac{1}{2} - \nicefrac{1}{2q}}(T_0,T;\rW^{\nicefrac{1}{2q},2q}(\Omegaocn))}\\
  &\le C\bigl(\| \vair \|_{\rLp(T_0,T;\rX_{\mu_c}^\air)}^{\nicefrac{1}{2}} \| \vair \|_{\E_{1,\mu_c,\air}(T_0,T)}^{\nicefrac{1}{2}} + \| \vocn \|_{\rLp(T_0,T;\rX_{\mu_c}^\ocn)}^{\nicefrac{1}{2}} \| \vocn \|_{\E_{1,\mu_c,\ocn}(T_0,T)}^{\nicefrac{1}{2}}\bigr).
    \end{aligned}
\end{equation}
An insertion of \eqref{eq:est second factor Triebel-Lizorkin} and \eqref{eq:est first factor Triebel-Lizorkin} into \eqref{eq:est Triebel-Lizorkin Lq} results in
\begin{equation}\label{eq:est Triebel-Lizorkin norm blow-up}
    \begin{aligned}
        &\quad \| B(\vair,\vocn) \|_{\rF_{pq,\mu}^{\nicefrac{1}{2}-\nicefrac{1}{2q}}(T_0,T;\rLq(G))}\\
        &\le C\bigl(\| \vair \|_{\rLp(T_0,T;\rX_{\mu_c}^\air)} \| \vair \|_{\E_{1,\mu_c,\air}(T_0,T)} + \| \vocn \|_{\rLp(T_0,T;\rX_{\mu_c}^\ocn)} \| \vocn \|_{\E_{1,\mu_c,\ocn}(T_0,T)}\bigr).
    \end{aligned}
\end{equation}
Plugging the estimates \eqref{eq:est rhs blow-up}, \eqref{eq:est Besov norm blow-up} as well as \eqref{eq:est Triebel-Lizorkin norm blow-up} into \eqref{eq:max reg estimate blow-up}, we obtain
\begin{equation}\label{eq:combined max reg est blow-up}
    \begin{aligned}
        &\quad \| (\vair,\vocn) \|_{\E_{1,\mu_c}(T_0,T)}\\
        &\le M(\| \vair(T_0) \|_{\rB_{qp,\sigmabar}^{\nicefrac{2}{q}}(\Omegaatm)} + \| \vocn(T_0) \|_{\rB_{qp,\sigmabar}^{\nicefrac{2}{q}}(\Omegaocn)} + \| \fair \|_{\E_{0,\mu_c,\air}(T_0,T)} + \| \focn \|_{\E_{0,\mu_c,\ocn}(T_0,T)}\\
        &\quad + C(\| \vair \|_{\rLp(T_0,T;\rX_{\mu_c}^\air)} \| \vair \|_{\E_{1,\mu_c,\air}(T_0,T)} + \| \vocn \|_{\rLp(T_0,T;\rX_{\mu_c}^\ocn)} \| \vocn \|_{\E_{1,\mu_c,\ocn}(T_0,T)})).
    \end{aligned}
\end{equation}
Let now $\eta \coloneqq \frac{1}{2MC}$ and $t_0 \in (T_0,t_+)$ close to $t_+$ such that $\| v \|_{\rLp(T_0,T;\rX_{\mu_c})} \le \eta$.
By \eqref{eq:combined max reg est blow-up}, for any $T \in (t_0,t_+)$, the norm $\| (\vair,\vocn) \|_{\E_{1,\mu_c}(T_0,T)}$ can be bounded by
\begin{equation*}
    2 M\bigl(\| \vair(T_0) \|_{\rB_{qp,\sigmabar}^{\nicefrac{2}{q}}(\Omegaatm)} + \| \vocn(T_0) \|_{\rB_{qp,\sigmabar}^{\nicefrac{2}{q}}(\Omegaocn)} + \| \fair \|_{\E_{0,\mu_c,\air}(T_0,T)} + \| \focn \|_{\E_{0,\mu_c,\ocn}(T_0,T)}\bigr).
\end{equation*}
Since $(\vair,\vocn)|_{(t_0,t_+)} \in \E_{1,\mu_c}(t_0,t_+) \hra \mathrm{BUC}([t_0,t_+];\rB_{qp,\sigmabar}^{\nicefrac{2}{q}}(\Omegaatm) \times \rB_{qp,\sigmabar}^{\nicefrac{2}{q}}(\Omegaocn))$, the solution can be continued beyond $t_+$, which yields  a contradiction.
\end{proof}

\section{Global well-posedness}\label{sec:global}

Throughout this section, assume $0< T < \infty$, $p=q=2$, which implies $\mu_c = 1$, and let $(\vair, \vocn)$ be  a solution to \eqref{eq:CAO}, or, equivalently, to \eqref{eq:CAOsimplified}, satisfying
\begin{equation}\label{eq:Assumptionsapriori}
	\begin{aligned}
		(\vair, \vocn) \in \E_{1,1,T},
	\end{aligned}
\end{equation}
with initial data and forcing terms
\begin{equation}\label{eq:Assumptionsapriori2}
    \begin{aligned}
        (\vair_0, \vocn_0)  \in \rH_{\sigmabar}^1(\Omega^\atm) \times \rH_{\sigmabar}^1(\Omega^\ocn) \quad \text{and} \quad 
        ( \fair,  \focn) &\in  \E_{0,1,T},
    \end{aligned}
\end{equation}
where $_{\sigmabar}$ again indicates the intersection with $\rL_{\sigmabar}^2$.
First, we recall from \cite{HK:16} the following cancellation law.

\begin{lem}[\hspace{-0.07em}{\cite[Lemma~6.3]{HK:16}}]\label{lem:canc law}
Let $p,q,r \in (1,\infty)$ and $g \colon \Omega \to G$. Then 
	\begin{equation*}
		\int_\Omega (\vtilde \cdot \nablaH g + w g_z) \cdot |g|^{q-2} g \d(\xH,z) = 0 \quad \text{and} \quad \int_\Omega (\vbar \cdot \nablaH g) \cdot |g|^{q-2} g \d(\xH,z) = 0
	\end{equation*}
provided the integrals exist.
\end{lem}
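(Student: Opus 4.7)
The plan is to reduce both identities to an integration-by-parts on $\nabla|g|^q$, exploiting the pointwise chain rule
\[
(a \cdot \nablaH g) \cdot |g|^{q-2} g = \tfrac{1}{q}\, a \cdot \nablaH |g|^q, \qquad (w\, \partial_z g) \cdot |g|^{q-2} g = \tfrac{1}{q}\, w\, \partial_z |g|^q,
\]
valid for sufficiently regular $g$ and arbitrary horizontal vector field $a$.

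For the second identity, I would use that $\vbar$ depends only on $\xH$ and, by definition of the hydrostatically solenoidal space $\rL_\sigmabar^r$, satisfies $\divH \vbar = 0$ on $G$. Integrating the displayed identity over $\Omega$ and carrying out an integration by parts in $\xH$ on each horizontal slice $G \times \{z\}$, one gets
\[
\int_\Omega \vbar \cdot \nablaH |g|^q \d(\xH,z) = -\int_\Omega |g|^q\, \divH \vbar \d(\xH,z) = 0,
\]
where the lateral boundary contributions vanish in view of periodicity (when $G = \T^2$) or decay at infinity (when $G = \R^2$).

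For the first identity, the key observation is that the vertical velocity is reconstructed by $w(\cdot,\cdot,z) = -\int_a^z \divH v \d\xi$, so that the 3D incompressibility $\divH v + \partial_z w = 0$ holds, and the kinematic boundary conditions $w|_{z=a} = w|_{z=b} = 0$ are automatic. Since $\divH \vbar = 0$, one also gets $\divH \vtilde + \partial_z w = 0$. Applying the pointwise identity and integrating by parts in $\xH$ and in $z$ yields
\[
\int_\Omega \bigl(\vtilde \cdot \nablaH |g|^q + w\, \partial_z |g|^q\bigr) \d(\xH,z) = -\int_\Omega (\divH \vtilde + \partial_z w)\, |g|^q \d(\xH,z) + \int_G \bigl[w\, |g|^q\bigr]_{z=a}^{z=b} \d\xH = 0,
\]
where the $z$-boundary terms vanish because $w = 0$ there, and the lateral terms vanish by the same arguments as above.

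The main technical point, and the only potential obstacle, is justifying the integration by parts under merely the assumption that the integrals exist. This is resolved by a standard density/approximation argument, approximating $v$, $w$ and $g$ by smooth functions that are periodic (or compactly supported in $\xH$) and satisfy the relevant boundary conditions, applying the identity for these approximations, and then passing to the limit using the hypothesis that all integrands lie in $\rL^1(\Omega)$.
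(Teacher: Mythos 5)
The paper does not prove this lemma; it simply \emph{recalls} it from \cite[Lemma~6.3]{HK:16}, so there is no in-paper proof to compare against. Your argument is the standard one behind that reference and is correct: rewrite $|g|^{q-2}g\cdot(a\cdot\nablaH g)=\tfrac1q\,a\cdot\nablaH|g|^q$ and $|g|^{q-2}g\cdot(w\,\partial_z g)=\tfrac1q\,w\,\partial_z|g|^q$, integrate by parts, and invoke $\divH\vbar=0$ for the barotropic identity and $\divH\vtilde+\partial_z w=0$ (from $w=-\int_a^z\divH v$ and $\divH\vbar=0$) together with $w|_{z=a}=w|_{z=b}=0$ for the baroclinic one. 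The only caveat, which you yourself flag, is that the hypothesis ``provided the integrals exist'' is shorthand for the regularity assumptions of \cite[Lemma~6.3]{HK:16} (this is where the otherwise-unused exponents $p,q,r$ enter): the approximation step implicitly requires that the intermediate quantities produced by the integration by parts, e.g.\ $\divH\vtilde\,|g|^q$ and $w\,\partial_z|g|^q$, are also integrable, not merely the two integrals displayed in the statement. Under the assumptions of \cite{HK:16} this is routine.
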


Next, we consider energy estimates in the situation of fully nonlinear interface  conditions. 

\begin{lem}[Energy estimates]\label{lem:energyestimates}
Let $ (\vair, \vocn), (\vair_0, \vocn_0), (\fair, \focn)$ be as in \eqref{eq:Assumptionsapriori} and \eqref{eq:Assumptionsapriori2}. Then there exists a continuous function $B_1$ on $[0,T]$,
depending on $\| \vair_0 \|_{\rL^2(\Omegaair)}$, $\| \vocn_0 \|_{\rL^2(\Omegaocn)}$, $\| f^{\air} \|_{\rL^2(0,T;\rL^2(\Omegaair))}$, $\| f^{\ocn} \|_{\rL^2(0,T;\rL^2(\Omegaocn))}$ and $t \in [0,T]$, such that
\begin{equation*}
\|\vair \|^2_{\rL^2(\Omegaair)} + \|\vocn \|^2_{\rL^2(\Omegaocn)} +  \int_0^t \| \nabla \vair \|^2_{\rL^2(\Omegaair)} \d s + \int_0^t \| \nabla \vocn \|^2_{\rL^2(\Omegaocn)} \d s + 
\int_0^t \| V\|_{\rL^3(G)} \d s  \leq B_1(t). %, \quad t \in [0,T].
\end{equation*}
\end{lem}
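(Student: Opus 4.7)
The plan is to carry out a standard $\rL^2$-energy estimate by testing the momentum equations in \eqref{eq:CAOsimplified} against $\vair$ and $\vocn$ respectively, integrating over $\Omegaair$ and $\Omegaocn$, and summing. The key hope is that the signs in the fully nonlinear interface conditions \eqref{eq:bc} will combine into a \emph{dissipative} boundary contribution, which is what makes the resulting differential inequality closable despite the rough coupling.

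First I would eliminate the convective terms: by \autoref{lem:canc law} with $q=2$, applied componentwise upon splitting $\vair = \vbaratm + \vtildeatm$ and analogously for the ocean, the integrals $\int_{\Omegaair} (\vair \cdot \nablaH \vair + \omegaair \partial_p \vair) \cdot \vair \d(\xH,p)$ and $\int_{\Omegaocn} (\vocn \cdot \nablaH \vocn + \wocn \partial_z \vocn) \cdot \vocn \d(\xH,z)$ vanish. The hypotheses of that lemma are supplied by $\omegaair|_{\Gammaairb \cup \Gammaairu} = 0$ and $\wocn|_{\Gab \cup \Gau} = 0$ together with the divergence constraints $\divH \vbaratm = \divH \vbarocn = 0$ coming from \eqref{eq:CAOsimplified}$_2$ and \eqref{eq:CAOsimplified}$_4$. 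Since $\nablaH \Phi_s$ and $\nablaH \pi_s$ depend only on the horizontal variables, vertical integration of the pressure terms reduces them to $\nablaH \Phi_s \cdot \vbaratm$ and $\nablaH \pi_s \cdot \vbarocn$ on $G$, whereupon horizontal integration by parts annihilates them by the same divergence constraints.

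The decisive point is the viscous contribution. Integration by parts in the vertical direction on $\Omegaair$ yields
\begin{equation*}
    \int_{\Omegaair} \Delta^{\atm} \vair \cdot \vair \d(\xH,p) = -\int_{\Omegaair} \bigl(|\nablaH \vair|^2 + p^2 |\partial_p \vair|^2\bigr) \d(\xH,p) + \int_G p_s^2 \, \partial_p \vair|_{\Gammaairu} \cdot \vair|_{\Gammaairu} \d \xH,
\end{equation*}
the contribution on $\Gammaairb$ being zero by \eqref{eq:bc}; substituting $\partial_p \vair|_{\Gammaairu} = -p_s^{-1} V|V|$ transforms the boundary term into $-\int_G p_s V|V| \cdot \vair|_{\Gammaairu} \d \xH$. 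The analogous computation for the ocean gives $-\int_{\Omegaocn} |\nabla \vocn|^2 \d(\xH,z) + \int_G p_s V|V| \cdot \vocn|_{\Gau} \d \xH$. Summing and invoking \eqref{eq:V}, the two interface integrals collapse to $-p_s \|V\|_{\rL^3(G)}^3$, which carries precisely the sign required.

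Combining this with Cauchy-Schwarz and Young on the forcing integrals produces a differential inequality to which Gronwall applies, delivering the bound on $\|\vair(t)\|_{\rL^2(\Omegaair)}^2 + \|\vocn(t)\|_{\rL^2(\Omegaocn)}^2$; integrating that inequality once in time controls the gradient norms and $\int_0^t \|V\|_{\rL^3(G)}^3 \d s$, after which H\"older in time yields the remaining estimate for $\int_0^t \|V\|_{\rL^3(G)} \d s$ in terms of the same continuous function $B_1(t)$. The point where caution is required is the sign and weight bookkeeping on the interface: the $p^2$ weight in $\Delta^{\atm}$, the $p_s^{-1}$ in the atmospheric Neumann datum of \eqref{eq:bc}, and the opposite orientations of the outward normals for $\Omegaair$ and $\Omegaocn$ on $\Gamma_i$ must all conspire to produce the minus sign in front of $p_s \|V\|_{\rL^3(G)}^3$; any slip here would leave an indefinite boundary term and destroy the closure of the estimate.
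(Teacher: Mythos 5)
Your proposal is correct and follows essentially the same route as the paper: test the momentum equations against $\vair$ and $\vocn$, cancel the convective terms via \autoref{lem:canc law}, integrate by parts in the vertical variable so the boundary conditions \eqref{eq:bc} combine (through the sign flip between the two Neumann data and the weight $p^2$ at $p=p_s$) into the dissipative term $-p_s\|V\|_{\rL^3(G)}^3$, and close with Young and Gronwall, followed by H\"older in time for the $\int_0^t\|V\|_{\rL^3}\d s$ term. The only cosmetic difference is the power of $p_s$ in front of $\|V\|_{\rL^3}^3$ (the paper displays $p_s^2$ after multiplying the ocean equation by $p_s^2$, which is consistent only when $p_s\equiv 1$; your unweighted computation giving $p_s$ is the clean one), but this does not affect the argument.
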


\begin{proof}
We multiply equations \eqref{eq:CAOsimplified}$_1$ and \eqref{eq:CAOsimplified}$_3$ by $\vair$ and $\vocn$ and integrate over $\Omegaair$ and $\Omegaocn$, respectively. Adding the resulting equations, we obtain 
\begin{equation*}
	\begin{aligned}
        &\quad \frac{1}{2}\dt \bigl(\| \vair \|^2_{\rL^2(\Omegaair)} + p_s^2\| \vocn \|^2_{\rL^2(\Omegaocn)}  \bigr)
        +C_a \| \nabla \vair \|^2_{\rL^2(\Omegaair)} +   p_s^2  \| \nabla \vocn \|^2_{\rL^2(\Omegaocn)} + p_s^2\| V \|^3_{\rL^3(G)} \\
        &\leq C \bigl(\| \vair \|^2_{\rL^2(\Omegaair)} +  \| \vocn \|^2_{\rL^2(\Omegaocn)} + \| f^{\air} \|^2_{\rL^2(\Omegaair)} + \| f^{\ocn}\|^2_{\rL^2(\Omegaocn)} \bigr),
	\end{aligned}
\end{equation*} 
where $C_a := \min \{ 1, p_a^2\} >0$. 
An integration in time and Gronwall's inequality yield the claim.
\end{proof}
We proceed by taking the vertical average in \eqref{eq:CAOsimplified}, resulting in
\begin{equation*}
\left \{
	\begin{aligned} 
        \partial_{t}\vairbar - \DeltaH\vairbar + \nablaH \Phi_s 
        &= \overline{f}^{\air}- \vairbar\cdot \nablaH \vairbar -  \int_{0}^{1} \left ( \vairtilde \cdot \nablaH \vairtilde + \divH(\vairtilde) \vairtilde \right ) \d p   + p_s^2\partial_{p}\vair|_{\Gammaairu}\\
        \divH \vairbar &= 0,\\
	\end{aligned} 
	\right.
\end{equation*}
in $G$ as well as the equality
\begin{equation}\label{eq:vairtilde}
    \begin{aligned}
        &\quad \partial_{t}\vairtilde - \Delta^{\atm} \vairtilde 
        + \vairtilde \cdot \nablaH \vairtilde + \omegaair \partial_{p}\vairtilde
        + \vairbar \cdot \nablaH \vairtilde\\
        &= \ftilde^{\air}- \vairtilde\cdot \nablaH \vairbar +  \int_{0}^{1} \left ( \vairtilde \cdot \nablaH \vairtilde + \divH (\vairtilde) \vairtilde \right ) \d p -\partial_{p}\vair|_{\Gammaairu}.
    \end{aligned}
\end{equation}
in $G \times (p_s,p_a)$.
Analogously, we obtain for the oceanic part
\begin{equation*}
\left \{
	\begin{aligned} 
        \partial_{t}\vocnbar - \DeltaH\vocnbar + \nablaH \pi_s 
        &= \overline{f}^{\ocn}- \vocnbar\cdot \nablaH \vocnbar - \int_{-1}^{0} \left ( \vocntilde \cdot \nablaH \vocntilde + \divH (\vocntilde) \vocntilde \right ) \d z +
        \partial_{z}\vocn|_{\Gau}  \\
        \divH \vocnbar &= 0,\\
	\end{aligned} 
	\right.
\end{equation*}
in $G$, and in $G \times (-1,0)$, we get
\begin{equation}\label{eq:vocntilde}
    \begin{aligned}
        &\quad \partial_{t}\vocntilde - \Delta \vocntilde + \vocntilde \cdot \nablaH \vocntilde + \wocn \partial_{z}\vocntilde + \vocnbar \cdot \nablaH \vocntilde \\
        &= \ftilde^{\ocn}- \vocntilde\cdot \nablaH \vocnbar  + \int_{-1}^{0} \left ( \vocntilde \cdot \nablaH \vocntilde + \divH (\vocntilde) \vocntilde \right ) \d z
        -\partial_{z}\vocn|_{\Gau}.
    \end{aligned}
\end{equation}

\begin{prop}[$\rL_t^\infty \rH^{1}_{\bold{x}}$-$\rL^2_t \rH^{2}_{\bold{x}}$-Estimates] \label{prop:linftyl2} \mbox{}\\
Let $ (\vair, \vocn)$, $(\vair_0, \vocn_0)$ and $(\fair, \focn)$ be as in \eqref{eq:Assumptionsapriori} and \eqref{eq:Assumptionsapriori2}. Then there exists a 
continuous function $B_2$ on~$[0,T]$, depending on $\| \vair_0 \|_{\rH^1(\Omegaair)}$, $\| \vocn_0 \|_{\rH^1(\Omegaocn)}$, $\| f^{\air} \|_{\rL^2(0,T;\rL^2(\Omegaair))}$, $\|\focn \|_{\rL^2(0,T;\rL^2(\Omegaocn))}$ and $t$, 
such that
\begin{equation*}
    \| \nabla \vair \|^2_{\rL^2(\Omegaair)} + \| \nabla \vocn \|^2_{\rL^2(\Omegaocn)} + \int_0^t \| \Delta \vair \|^2_{\rL^2(\Omegaair)} \d s  + \int_0^t \| \Delta \vocn \|^2_{\rL^2(\Omegaocn)} \d s  \leq B_2(t), \quad \text{for $t \in [0,T]$}.
\end{equation*}
\end{prop}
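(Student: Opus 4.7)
The plan is to adapt the Cao--Titi strategy to the coupled CAO-system, the core difficulty being the cubic nonlinear interface coupling $V|V|$ appearing as a Neumann datum on $\Gamma$. The starting point is the barotropic/baroclinic splitting already introduced above, together with the energy bound from \autoref{lem:energyestimates}, which in particular provides the a-priori control $V \in \rL^3(0,T;\rL^3(G))$ that will be crucial for absorbing boundary contributions.

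First, I would derive $\rL^p$-bounds on the baroclinic parts $\vairtilde,\vocntilde$ for some $p>2$. Testing \eqref{eq:vairtilde} with $|\vairtilde|^{p-2}\vairtilde$ and \eqref{eq:vocntilde} with $|\vocntilde|^{p-2}\vocntilde$, the cubic transport terms vanish by \autoref{lem:canc law}; the barotropic coupling terms $\vairbar\cdot\nablaH\vairtilde$ and $\vocnbar\cdot\nablaH\vocntilde$ are handled by Ladyzhenskaya/Gagliardo--Nirenberg using the uniform $\rL^\infty_t\rL^2_\xH \cap \rL^2_t \rH^1_\xH$-bound on $(\vairbar,\vocnbar)$ that follows from the 2D Navier--Stokes structure of their equations. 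The nonlinear boundary term $p_s^{-1} V|V|$ enters through the boundary integral arising from integration by parts of the Laplacian and is controlled by a trace inequality combined with Young's inequality and the $\rL^3(\Gamma)$-control of $V$. This yields
\begin{equation*}
    \sup_{s\in[0,t]}\|\vairtilde(s)\|_{\rL^p(\Omegaair)}^p + \sup_{s\in[0,t]}\|\vocntilde(s)\|_{\rL^p(\Omegaocn)}^p + \int_0^t(\|\nabla|\vairtilde|^{p/2}\|_{\rL^2}^2 + \|\nabla|\vocntilde|^{p/2}\|_{\rL^2}^2)\d s \leq B_*(t).
\end{equation*}

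Second, with the $\rL^p$-bound on $\vairtilde, \vocntilde$ at hand, I would test \eqref{eq:CAOsimplified}$_1$ with $-\Delta^{\atm}\vair$ over $\Omegaair$ and \eqref{eq:CAOsimplified}$_3$ with $-\Delta\vocn$ over $\Omegaocn$, and sum. After integration by parts, the dissipative term yields $\|\Delta\vair\|_{\rL^2}^2+\|\Delta\vocn\|_{\rL^2}^2$ and the time derivative produces $\tfrac{1}{2}\dt(\|\nabla\vair\|_{\rL^2}^2+\|\nabla\vocn\|_{\rL^2}^2)$, up to boundary contributions on $\Gamma_i$ of the form
\begin{equation*}
    -p_s^2\int_G \partial_p \vair|_{\Gammaairu}\cdot \dt\vair|_{\Gammaairu}\d\xH + \int_G \partial_z\vocn|_{\Gau}\cdot \dt\vocn|_{\Gau}\d\xH.
\end{equation*}
Substituting the interface conditions $\partial_p\vair = -p_s^{-1}V|V|$ and $\partial_z\vocn = p_s V|V|$, these two boundary contributions add to $p_s\dt \int_G \tfrac{1}{3}|V|^3\d\xH$ (up to lower-order signs), which is precisely the time derivative of a non-negative quantity; this is the crucial sign cancellation enforced by the structure of \eqref{eq:coupling}, and it allows the boundary terms to be absorbed into the dissipation. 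The nonlinear transport terms $\bilinH{\vair}{\vair}+\omegaair\partial_p\vair$, $\bilinH{\vocn}{\vocn}+\wocn\dz\vocn$ are then estimated by the anisotropic Ladyzhenskaya inequalities of Cao--Titi type, splitting into $\vairbar,\vairtilde$ and using the $\rL^p$-bound from the first step together with Young's inequality to ensure that the highest-order norms $\|\Delta\vair\|_{\rL^2},\|\Delta\vocn\|_{\rL^2}$ appear with an absorbable small coefficient.

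This produces a differential inequality of the form
\begin{equation*}
    \dt\Bigl(\|\nabla\vair\|_{\rL^2}^2+\|\nabla\vocn\|_{\rL^2}^2 + \tfrac{2p_s}{3}\|V\|_{\rL^3(G)}^3\Bigr) + c\bigl(\|\Delta\vair\|_{\rL^2}^2+\|\Delta\vocn\|_{\rL^2}^2\bigr) \leq \Phi(t)\bigl(1+\|\nabla\vair\|_{\rL^2}^2+\|\nabla\vocn\|_{\rL^2}^2\bigr),
\end{equation*}
where $\Phi \in \rL^1(0,T)$ is controlled in terms of the energy bound and the $\rL^p$-bound. Applying Gronwall's lemma and integrating in time yields $B_2(t)$.

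The main obstacle is precisely the handling of the cubic boundary term $V|V|$: because it is neither quadratic nor obviously sign-definite from either side taken alone, one must exploit the exact sign compatibility built into the CAO-coupling so that the two boundary integrals arising from $\Omegaair$ and $\Omegaocn$ combine into $\dt\|V\|_{\rL^3(G)}^3$ instead of producing an uncontrolled source. This interface cancellation, together with the $\rL^3_t\rL^3_\xH$-control of $V$ already available from \autoref{lem:energyestimates}, is what makes the $\rH^1$-estimate close.
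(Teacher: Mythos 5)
You correctly identified the key structural observation driving the $\rH^2$-estimate: testing with the second-order operator on each side of the interface produces boundary contributions which combine to $\tfrac{1}{3}\dt\|V\|^3_{\rL^3(G)}$, a nonnegative total time derivative, and this is exactly what the paper exploits in its final step. However, there are two genuine gaps. In the $\rL^p$-estimate for $(\vairtilde,\vocntilde)$, the interface boundary term cannot be ``controlled by a trace inequality combined with Young's inequality and the $\rL^3(\Gamma)$-control of $V$'' as you claim. After weighting and adding the two baroclinic equations, the offending contribution is $\int_G V|V|\,(|\vairtilde|^{p-2}\vairtilde-|\vocntilde|^{p-2}\vocntilde)|_\Gamma\,\d\xH$, cubic in $V$ and cubic in the traces of the $\vtilde$'s; the bound $V\in\rL^3(0,T;\rL^3(G))$ is far too weak to close this by brute force. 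One must exploit the interface sign structure at this level as well: writing $V=\bar{V}+\tilde{V}$ with $\tilde{V}=\vairtilde|_{\Gammaairu}-\vocntilde|_{\Gau}$, the monotonicity $(a-b)\cdot(|a|^2a-|b|^2b)\ge\tfrac12(|a|^2+|b|^2)|a-b|^2$ shows the $\tilde{V}$-part is a dissipation-type term that moves to the left-hand side, and only the residual $\bar{V}$-contribution must be estimated, which still requires a nontrivial chain of Ladyzhenskaya/Young inequalities against $\|\nabla|\vtilde|^2\|_{\rL^2}$ and the $\rH^2$-norm of the barotropic mode.

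Second, your two-step scheme skips intermediate estimates that are needed to close the final inequality. You invoke only an energy-level $\rL^\infty_t\rL^2\cap\rL^2_t\rH^1$-bound on $(\vairbar,\vocnbar)$, but the proof actually requires the stronger $\rL^\infty_t\rH^1\cap\rL^2_t\rH^2$-estimate for the barotropic modes in order to control the surface pressure gradients and the barotropic transport. More critically, you omit the $\rL^\infty_t\rL^2\cap\rL^2_t\rH^1$-estimate for the vertical derivatives $(\vair_p,\vocn_z)$, obtained by testing \eqref{eq:CAOsimplified}$_1$ and \eqref{eq:CAOsimplified}$_3$ with $-\vair_{pp}$, $-\vocn_{zz}$ and again exploiting the $\dt\|V\|^3_{\rL^3}$ cancellation. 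In the last step the Gronwall coefficient functions depend on $\|\nabla\vair_p\|_{\rL^2}$ and $\|\nabla\vocn_z\|_{\rL^2}$ through the vertical transport terms $\omegaair\partial_p\vair$ and $\wocn\dz\vocn$; without the $\rL^2_t\rH^1$-bound on the vertical derivatives, those coefficients are not integrable in time and the Gronwall argument does not close.
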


\begin{proof} We subdivide the proof in several steps.
\setcounter{stp}{0}
\begin{step}[Estimates for $(\vairbar, \vocnbar)$ in $\rL^{\infty}_t \rH^{1}_{xy} \cap \rL^2_t \rH^2_{xy}$] \label{aprioristep1} \mbox{}\\
As the equations for $\vairbar$ and $\vocnbar$ do not comprise boundary conditions, the estimate claimed can deduced essentially as in 
\cite{HK:16} or \cite{GGHHK:20}. 
Note that, however, we do not  fully absorb the $\rH^2$-norm.
The resulting estimate reads as
\begin{equation*}
	\begin{aligned}
8\partial_t \| \nablaH \overline{v}\|^2_{\rL^2(G)}  + \| \DeltaH \vbar \|^2_{\rL^2(\Omega)} +\|\nablaH \pi \|^2_{\rL^2(G)}
        &\leq C\| |\tilde{v}| |\nabla \tilde{v}|\|^2_{\rL^2(\Omega)} + C \bigl(\| v \|^2_{\rH^{1}(\Omega)}  +\| v \|^2_{\rL^2(\Omega)} \bigr)\| v \|^2_{\rL^2(\Omega)} \\
        & \quad+ C \bigl( \| v \|_{\rL^2(\Omega)} + \| v \|^2_{\rL^2(\Omega)} \bigr) \bigl( \| v \|_{\rH^{1}(\Omega)} + \| v \|^2_{\rH^{1}(\Omega)}\bigr) \| \nablaH \vbar\|^2_{\rL^2(G)} \\
         &\quad + C \| v_z \|^2_{\rL^2(\Omega)} + C \eps \| \nabla v_z \|^2_{\rL^2(\Omega)}+C \| f \|^2_{\rL^2(\Omega)}.
    \end{aligned}
\end{equation*}
\end{step}

\begin{step}[Estimates for $(\vairtilde, \vocntilde)$ in $\rL_t^\infty  \rL^4_{xyp} \times \rL_t^\infty \rL^4_{xyz}$]\label{aprioristep3} \mbox{}\\
We multiply \eqref{eq:vairtilde} and \eqref{eq:vocntilde} by $|\vairtilde|^2\vairtilde$ as well as $|\vocntilde|^2\vocntilde$ and integrate over $\Omegaair$ and  $\Omegaocn$ to obtain 
\begin{equation*}
    \begin{aligned}
        &\quad \frac{1}{4} \dt \| \vairtilde \|^4_{\rL^4(\Omegaair)}  
         + C_a \bigl(\frac{1}{2} \| \nabla |  \vairtilde |^2\|^2_{\rL^2(\Omegaair)} + \| | \vairtilde | |\nabla  \vairtilde| \|^2_{\rL^2(\Omegaair)} \bigr)+ p_s^{2} \int_G V|V| |\vairtilde|^2\vairtilde \d \xH  \\
        &\leq \int_{\Omegaair} \tilde{f}^{\air} \cdot |\vairtilde|^2\vairtilde \d(\xH,p) +  \int_{\Omegaair} \int_{0}^{1} (\vairtilde \cdot \nablaH \vairbar + \divH(\vairtilde) \vairtilde ) \d p \cdot 
        |\vairtilde|^2\vairtilde \d(\xH,p)\\
        &\quad -\int_{\Omegaair} (\vairtilde \cdot \nablaH \vairbar) \cdot |\vairtilde|^2\vairtilde \d(\xH,p) -  \int_{\Omegaair} \vairtilde_p |_{\Gammaairu} \cdot |\vairtilde|^2\vairtilde \d(\xH,p),
    \end{aligned}
\end{equation*}
with $C_a := \min \{ 1, p_a^2\}$, as well as
\begin{equation*}
    \begin{aligned}
        &\quad \frac{1}{4} \dt \| \vocntilde \|^4_{\rL^4(\Omegaocn)} + \frac{1}{2} \| \nabla |  \vocntilde |^2\|^2_{\rL^2(\Omegaocn)} + \| | \vocntilde | |\nabla  \vocntilde| \|^2_{\rL^2(\Omegaocn)} - 
        \int_G V|V| |\vocntilde|^2\vocntilde \d \xH \\
        &= \int_{\Omegaocn} \tilde{f}^{\ocn} \cdot |\vocntilde|^2\vocntilde \d(\xH,z) + \int_{\Omegaocn} \int_{-1}^{0} (\vocntilde \cdot \nablaH \vocnbar + \divH(\vocntilde) \vocntilde ) \d z \cdot |\vocntilde|^2\vocntilde \d(\xH,z) \\
        & \quad - \int_{\Omegaocn} (\vocntilde \cdot \nablaH \vocnbar) \cdot |\vocntilde|^2\vocntilde \d(\xH,z) -\int_{\Omegaocn} \vocntilde_z |_{{\Gau}} \cdot |\vocntilde|^2\vocntilde \d(\xH,z).
    \end{aligned}
\end{equation*}
A multiplication of the oceanic equation by $p_s^2$ and an addition of it to the atmospheric inequality leads to
\begin{equation}
    \begin{aligned}\label{eq:tildediffrence}
&\quad \frac{1}{4}\dt \bigl( \| \vairtilde \|^4_{\rL^4(\Omegaair)}+p_s^{2}\| \vocntilde \|^4_{\rL^4(\Omegaocn)} \bigr) + \frac{1}{2} \bigl(C_a \| \nabla |  \vairtilde |^2\|^2_{\rL^2(\Omegaair)}   
+ p_s^{2}\| \nabla |  \vocntilde |^2\|^2_{\rL^2(\Omegaocn)}\bigr) + C_a\| | \vairtilde | |\nabla  \vairtilde| \|^2_{\rL^2(\Omegaair)} \\ 
&\quad + p_s^{2}\| | \vocntilde | |\nabla  \vocntilde| \|^2_{\rL^2(\Omegaocn)} + 
\frac{p_s^2}{2} \bigl (  \| |\vairtilde||\tilde{V} |V|^{\nicefrac{1}{2}} \|^2_{\rL^2(G)} + \| |\vocntilde||\tilde{V} |V|^{\nicefrac{1}{2}} \|^2_{\rL^2(G)}  \bigr )\\
&\le  \int_{\Omegaair} \tilde{f}^{\air} \cdot |\vairtilde|^2\vairtilde \d(\xH,p) + 
p_s^{2}\int_{\Omegaocn} \tilde{f}^{\ocn} \cdot |\vocntilde|^2\vocntilde \d(\xH,z)\\
&\quad + \int_{\Omegaair} \int_{0}^{1} (\vairtilde \cdot \nablaH \vairbar + \divH(\vairtilde) \vairtilde ) \d p \cdot |\vairtilde|^2\vairtilde \d(\xH,p) \\
&\quad  + p_s^2 \int_{\Omegaocn} \int_{-1}^{0} (\vocntilde \cdot \nablaH \vocnbar + \divH(\vocntilde) \vocntilde ) \d z \cdot |\vocntilde|^2\vocntilde \d(\xH,z) 
-\int_{\Omegaair} (\vairtilde \cdot \nablaH \vairbar) \cdot |\vairtilde|^2\vairtilde \d(\xH,p) \\
&\quad -p_s^{2}\int_{\Omegaocn} (\vocntilde \cdot \nablaH \vocnbar) \cdot |\vocntilde|^2\vocntilde \d(\xH,z)
        - \int_{\Omegaair} \vairtilde_p |_{\Gammaairu} \cdot |\vairtilde|^2\vairtilde \d(\xH,p)  - p_s \int_{\Omegaocn} \vocntilde_z |_{{\Gau}} \cdot |\vocntilde|^2\vocntilde \d(\xH,z)  \\
        &\quad -p_s^2 \int_G \bar{V}|V|\left (|\vairtilde|^2\vairtilde  -   |\vocntilde|^2\vocntilde \right ) \d \xH.
    \end{aligned}
\end{equation}
Here and in the following we employ the splitting $V = \bar{V} + \Tilde{V}$ where $\bar{V}:= \vbaratm - \vbarocn$ and $\tilde{V}:= \vairtilde |_{\Gammaairu} - \vocntilde |_{\Gau}$.
In view of the results in \cite{HK:16}, we only need to estimate the last term above. We obtain
\begin{equation*}
    \begin{aligned}
        \int_G \bar{V}|V|\left (|\vairtilde|^2\vairtilde    - |\vocntilde|^2\vocntilde \right ) \d \xH = -\int_G \tilde{V}|\bar{V}|V \left ( |\vairtilde|^2 + \vairtilde \vocntilde + |\vocntilde|^2 \right ) \d \xH.
    \end{aligned}
\end{equation*}
Hence, it suffices to estimate $\| |\tilde{V}| |\bar{V}| |V| |\vtilde|^2 \|_{\rL^1(G)}$ for $\vocntilde$ and $\vairtilde$. Observe that for $\eps$, $\eps' > 0$, we get
\begin{equation*}
    \begin{aligned}
        \int_G |\tilde{V}| |\bar{V}| |V| |\vtilde|^2 \d \xH 
        &\leq \eps \| |\vtilde| |\Tilde{V}| |V|^{\nicefrac{1}{2}} \|^2_{\rL^2(G)} + C\| |\vtilde| |\bar{V}||V|^{\nicefrac{1}{2}} \|^2_{\rL^2(G)} \quad \text{and}\\
        \| |\vtilde| |\bar{V}||V|^{\nicefrac{1}{2}} \|^2_{\rL^2(G)} &\leq \eps' \| \vtilde \|^4_{\rL^8(G)} + C \| |\bar{V}||V|^{\nicefrac{1}{2}}\|^4_{\rL^{\nicefrac{8}{3}}(G)} \\ 
        &\le \eps' \bigl( \| \nabla |\vtilde|^2 \|^2_{\rL^2(\Omega)} + \|\vtilde\|^4_{\rL^4(\Omega)} \bigr) +C \| \bar{V} \|^6_{\rL^4(G)} + C \| |\bar{V}| |\tilde{
        V}|^{\nicefrac{1}{2}}\|^4_{\rL^{\nicefrac{8}{3}}(G)} 
    \end{aligned}
\end{equation*}
by using 
\begin{equation}
    \begin{aligned}\label{eq:vtilde3}
\| \vtilde \|^4_{\rL^8(G)} = \| |\vtilde |^2 \|^2_{\rL^4(G)} \leq C\| |\vtilde|^2\|^2_{\rW^{\nicefrac{1}{4},4}(\Omega)} \leq C \| |\vtilde|^2 \|^2_{\rH^1(\Omega)} 
\leq C \bigl( \| \nabla |\vtilde|^2 \|^2_{\rL^2(\Omega)} + \| |\vtilde|^2 \|^2_{\rL^2(\Omega)}\bigr).
    \end{aligned}
\end{equation}
We further get $\|\bar{V}\|^6_{\rL^4(G)} \leq \eps'' \| \bar{V}\|^2_{\rH^2(G)} + C\| \bar{V}\|^{18}_{\rL^2(G)} \leq \eps'' \| \bar{V}\|^2_{\rH^2(G)} + \| \bar{V}\|^{26}_{\rL^2(G)} +C$ for suitable $\eps'' > 0$ by Ladyzhenskaya's and Young's inequality, so it remains to estimate $\| |\bar{V}| |\tilde{V}|^{\nicefrac{1}{2}}\|^4_{\rL^{\nicefrac{8}{3}}(G)}$. Employing H\"older's, Young's and Ladyzhenskaya's inequality, we obtain for some $\eps'''$, $\eps'''' > 0$ the estimate
\begin{equation*}
\| |\bar{V}||\tilde{V}|^{\nicefrac{1}{2}}\|^4_{\rL^{\nicefrac{8}{3}}(G)} \leq \eps'''\| \tilde{V} \|^4_{\rL^8(G)} + C\| \bar{V}\|^8_{\rL^{\nicefrac{16}{5}}(G)} \leq \eps'''\| \tilde{V} \|^4_{\rL^8(G)} 
+ C\eps'''' \| \bar{V} \|^2_{\rH^2(G)} + C \|\bar{V}\|^{26}_{\rL^2(G)}.
\end{equation*}
Summarizing, for $\eps'$, $\eps'' > 0$, we proved
\begin{equation}\label{eq:vtilde4}
 \| |\vtilde| |\bar{V}||V|^{\nicefrac{1}{2}} \|^2_{\rL^2(G)} \leq \eps' \bigl( \| \nabla |\vtilde|^2 \|^2_{\rL^2(\Omega)} + \| |\vtilde|^2 \|^2_{\rL^2(\Omega)}\bigr) + 
\eps''\| \bar{V} \|^2_{\rH^2(G)} + C \| \bar{V} \|^{26}_{\rL^2(G)} +C.
\end{equation}
\end{step}

\begin{step}[Estimates for $(\vair_p, \vocn_z)$  in $\rL_t^\infty  \rL^2_{xyp} \cap  \rL_t^2 \rH^{1}_{xyp} \times \rL_t^\infty \rL^2_{xyz} \cap  \rL_t^2 \rH^{1}_{xyz}$]\label{aprioristep2} \mbox{} \\
Multiplying \eqref{eq:CAOsimplified}$_1$, \eqref{eq:CAOsimplified}$_3$ by $-\vair_{pp}$, $-\vocn_{zz}$, and integrating over $\Omegaair$ and $\Omegaocn$, respectively, we obtain by~\autoref{lem:canc law}
\begin{equation*}
\begin{aligned}
 &\enspace \frac{1}{2} \dt \| \vair_p \|^2_{\rL^2(\Omegaair)} + \int_G \dt \vair V |V| \d \xH + C_a\|\nabla \vair_p \|^2_{\rL^2(\Omegaair)}+2\int_{\Omegaair}p \partial_p \vair \partial_{pp}\vair \d(\xH,p)+ 2\int_G \nablaH \vair \nablaH V |V| \d \xH \\
        &\le \int_G \nablaH \Phi_s \cdot \vair_p|_{\Gammaairb} \d \xH - \int_{\Omegaair} \fair \cdot \vair_{pp} \d(\xH,p) - \int_{\Omegaair} (\vair_p \cdot \nablaH \vair) \vair_p \d(\xH,p)\\
        &\quad + \int_{\Omegaair} \divH(\vair) \vair_p \cdot \vair_p \d(\xH,p)
        + \int_G (\vair \cdot \nablaH \vair) V|V| \d \xH
    \end{aligned}
\end{equation*}
as well as 
\begin{equation*}
    \begin{aligned}
        &\quad \frac{1}{2} \dt \| \vocn_z \|^2_{\rL^2(\Omegaocn)} - \int_G \dt \vocn V |V| \d \xH + \|\nabla \vocn_z \|^2_{\rL^2(\Omegaocn)} -2 \int_G \nablaH \vocn \nablaH V |V| \d \xH \\
        &= \int_G \nablaH \pi_s \cdot \vocn_z|_{\Gau} \d \xH - \int_{\Omegaocn} \focn \cdot \vocn_{zz} \d (\xH,z)
         - \int_{\Omegaocn} (\vocn_z \cdot \nablaH \vocn) \vocn_z \d(\xH,z)\\
         &\quad + \int_{\Omegaocn} \divH(\vocn) \vocn_z \cdot \vocn_z \d(\xH,z)
        - \int_G (\vocn \cdot \nablaH \vocn) V|V| \d \xH.
    \end{aligned}
\end{equation*}
Adding the atmospheric and oceanic part, and using the relation $\dt |V|^3 = 3 V |V| \dt V$, we find that
\begin{equation*}
\begin{aligned} 
 &\quad \dt \bigl( \frac{1}{2}\| \vair_p \|^2_{\rL^2(\Omegaair)}+ \frac{1}{2}\| \vocn_z \|^2_{\rL^2(\Omegaocn)} \bigr)+ \frac{1}{3} \dt  \|V \|^3_{\rL^3(G)} + 
C_a \| \nabla \vair_p \|^2_{\rL^2(\Omegaair)} + \|\nabla \vocn_z \|^2_{\rL^2(\Omegaocn)} + 2\| |\nablaH V| |V|^{\nicefrac{1}{2}} \|^2_{\rL^2(G)} \\ 
        &\leq \int_G  \nablaH \Phi_s \cdot \vair_p|_{\Gammaairb} \d \xH +  \int_G \nablaH \pi_s \cdot \vocn_z|_{\Gau} \d \xH  - \int_{\Omegaair} \fair \cdot \vair_{pp} \d(\xH,p) - \int_{\Omegaocn} \focn \cdot \vocn_{zz} \d(\xH,z)\\
        &\quad - \int_{\Omegaair} (\vair_p \cdot \nablaH \vair) \vair_p \d(\xH,p) + \int_{\Omegaair} \divH(\vair) \vair_p \cdot \vair_p \d(\xH,p) - \int_{\Omegaocn} (\vocn_z \cdot \nablaH \vocn) \vocn_z \d(\xH,z)\\
        &\quad +  \int_{\Omegaocn} \divH(\vocn) \vocn_z \cdot \vocn_z \d(\xH,z) +\int_G\left ( (\vair \cdot \nablaH \vair)  -  (\vocn \cdot \nablaH \vocn) \right ) |V|V \d \xH -2\int_{\Omegaair}p \partial_p \vair \partial_{pp}\vair \d(\xH,p).
    \end{aligned}
\end{equation*}
We only focus on the estimate of the last two terms on the right-hand side above. 
Using the relations $\vair \cdot \nablaH \vair - \vocn \cdot \nablaH \vocn = V \cdot \nablaH \vair + \vocn \cdot \nablaH V$ and 
$\int_{\Omegaair}p \partial_p \vair \partial_{pp}\vair \d(\xH,p) \le C \| \vair_p \|^2_{\rL^2(\Omegaair)} + \eps \| \nabla \vair_p \|^2_{\rL^2(\Omegaair)}$, we obtain
\begin{equation*}
    \int_G\left (V \nablaH \vair + \vocn \nablaH V \right )|V| V \d \xH = -\int_G \left (2\vair\cdot  \nablaH V + \divH(V) \cdot \vair\right ) |V|V \d \xH + \int_G \vocn \cdot \nablaH V |V|V \d \xH.
\end{equation*}
Hence, there exists $C_1>0$ such that 
\begin{equation*}
 \begin{aligned}
&\quad \bigl | \int_G\left ( (\vair \cdot \nablaH \vair)  -  (\vocn \cdot \nablaH \vocn) \right )|V| V \d \xH \bigr | 
        \leq C \||\vair| | \nablaH V |  |V|V\|_{\rL^1(G)} +  \||\vocn| | \nablaH V |  |V|V\|_{\rL^1(G)} \\ 
&\leq 2 \| | \nablaH V| |V|^{\nicefrac{1}{2}} \|^2_{\rL^2(G)} + C_1 \bigl(\| |\vair| |V|^{\nicefrac{3}{2}} \|^2_{\rL^2(G)} +\| |\vocn| |V|^{\nicefrac{3}{2}} \|^2_{\rL^2(G)} \bigr).
    \end{aligned}
\end{equation*}
Noting that $V = \tilde{V}+\overline{V}$ as well as  $v = \vtilde + \vbar$, we see that 
\begin{equation*}
    \begin{aligned}      
        \| |v| |V|^{\nicefrac{3}{2}} \|^2_{\rL^2(G)} &\leq C \bigl( \| |\vtilde||\tilde{V} |V|^{\nicefrac{1}{2}}\|^2_{\rL^2(G)} +  \| |\vtilde||\Bar{V}|  |V|^{\nicefrac{1}{2}}\|^2_{\rL^2(G)} +   \| |\vbar||\tilde{V}|V|^{\nicefrac{1}{2}}\|^2_{\rL^2(G)}+\| |\vbar||\bar{V}|V|^{\nicefrac{1}{2}}\|^2_{\rL^2(G)} \bigr) \\
        &=: I_1 + I_2 + I_3 + I_4,
    \end{aligned}
\end{equation*}
where $v$ denotes $\vair$ and $\vocn$, respectively. Now, $I_1$ can be absorbed into \eqref{eq:tildediffrence}, whereas $I_2$ was estimated in \eqref{eq:vtilde4}. 
For $I_3$, we notice that $I_3 \leq C (\| |\vbar||\vairtilde|V|^{\nicefrac{1}{2}}\|^2_{\rL^2(G)} +\| |\vbar||\vocntilde|V|^{\nicefrac{1}{2}}\|^2_{\rL^2(G)} )$ and refer to \autoref{aprioristep3}. 
Concerning $I_4$, we observe that 
\begin{equation*}
    \begin{aligned}
            I_4 &\leq C \bigl( \| |\vairbar|^2 |V|^{\nicefrac{1}{2}} \|^2_{\rL^2(G)} + \| |\vairbar||\vocnbar||V|^{\nicefrac{1}{2}} \|^2_{\rL^2(G)}+ \| |\vocnbar|^2 |V|^{\nicefrac{1}{2}}  \|^2_{\rL^2(G)}\bigr) \\
            &\leq C\bigl( \| |\vairbar|^2 |V|^{\nicefrac{1}{2}} \|^2_{\rL^2(G)} + \| |\vocnbar|^2 |V|^{\nicefrac{1}{2}} \|^2_{\rL^2(G)}\bigr), 
    \end{aligned}
\end{equation*}
and, upon further splitting $V$,
\begin{equation*}
    \begin{aligned}
        \| |\vairbar|^2 |V|^{\nicefrac{1}{2}} \|^2_{\rL^2(G)} 
        &\leq C \bigl( \| |\vairbar|^{\nicefrac{5}{2}} \|^2_{\rL^2(G)} + \|| \vocnbar|^{\nicefrac{5}{2}} \|^2_{\rL^2(G)}+\| |\vairbar|^2 |\tilde{V}|^{\nicefrac{1}{2}}  \|^2_{\rL^2(G)} \bigr).
    \end{aligned}
\end{equation*}
Hence, it remains to estimate $J_1 := \| |\vbar|^{\nicefrac{5}{2}}\|_{\rL^2(G)}^2$ as well as $J_2:=  \| |\vbar|^2 |\vtilde|^{\nicefrac{1}{2}}\|_{\rL^2(G)}^2$ for $\vair$ and $\vocn$, respectively.
Ladyzhenskaya's inequality yields  
\begin{equation*}
    J_1 = \| \vbar \|^5_{\rL^5(G)} \leq \| \vbar \|^{\nicefrac{3}{2}}_{\rH^2(G)} \| \vbar \|^{\nicefrac{7}{2}}_{\rL^2(G)} \leq \eps \| \vbar \|^2_{\rH^2(G)} + C\| \vbar\|^{14}_{\rL^2(G)} \leq \eps \| \vbar \|^2_{\rH^2(G)} + \| \vbar\|^{26}_{\rL^2(G)} +C,
\end{equation*}
and analogously
\begin{equation*}
    \begin{aligned}
        J_2 &\leq \| \vtilde \|_{\rL^8(G)} \| \vbar \|^4_{\rL^{\nicefrac{32}{7}}(G)} \leq \eps \| \vtilde \|^4_{\rL^8(G)} + C \| \vbar \|^{\nicefrac{16}{3}}_{\rL^{\nicefrac{32}{7}}(G)} \leq \eps \| \vtilde \|^4_{\rL^8(G)} + \eps' \| \vbar \|^2_{\rH^2(G)} + \| \vbar \|^{26}_{\rL^2(G)} +C,
    \end{aligned}
\end{equation*}
where we refer to \eqref{eq:vtilde3} for the term $\|\vtilde\|^4_{\rL^8(G)}$.

\end{step}
\begin{step}[Adding the above estimates]\label{aprioristep4}\mbox{}\\
Multiplying \eqref{eq:tildediffrence} by $\frac{2C_1}{p_s}$, and adding the estimates from the steps above, we deduce that
\begin{equation*}
\begin{aligned}
&\quad \dt \bigl(8 \| \nablaH \vairbar\|^2_{\rL^2(G)} +8\| \nablaH \vocnbar \|^2_{\rL^2(G)}+\frac{C}{4} \| \vairtilde \|^4_{\rL^4(\Omegaair)}+\frac{Cp_s^2}{4} \| \vocntilde \|^4_{\rL^4(\Omegaocn)}\\
&\quad +\frac{1}{2}\| \vair_p\|^2_{\rL^2(\Omegaair)}+\frac{1}{2}  \| \vocn_z \|^2_{\rL^2(\Omegaocn)} +\frac{1}{3} \| V \|^3_{\rL^3(G)}  \bigr) + \frac{1}{2}\| \nablaH \Phi \|^2_{\rL^2(G)} + \frac{1}{2} \| 
\nablaH \pi_s \|^2_{\rL^2(G)}\\
&\quad + \delta \bigl( \| \DeltaH \vairbar \|^2_{\rL^2(G)} + \| \DeltaH \vocnbar \|^2_{\rL^2(G)}  +  \| \nabla  \vair_p \|^2_{\rL^2(\Omegaair)}+ \| \nabla  \vocn_z \|^2_{\rL^2(\Omegaocn)} \bigr)\\
&\leq K_1(t) \bigl(\| \nablaH \vairbar\|^2_{\rL^2(G)} +\| \nablaH \vocnbar \|^2_{\rL^2(G)}+ \| \vairtilde \|^4_{\rL^4(\Omegaair)}+ \| \vocntilde \|^4_{\rL^4(\Omegaocn)}\\
&\quad +\| \vair_p\|^2_{\rL^2(\Omegaair)}+  \| \vocn_z \|^2_{\rL^2(\Omegaocn)} + \| V \|^3_{\rL^3(G)}  \bigr) +K_2(t)
    \end{aligned}
\end{equation*}
for some  $\delta >0$, and where $K_1$ and $K_2$ are functions integrable with respect to time by \autoref{lem:energyestimates}. 
Finally, Gronwall's inequality yields the existence of $b_1 = b_1(\|\vair_0\|_{\rH^1(\Omegaair)}, \| \vocn_0 \|_{\rH^1(\Omegaocn)}, \| \fair \|_{\rL^2(\Omegaair)}, \| \focn \|_{\rL^2(\Omegaocn)}, t)$ with
\begin{equation*}
\begin{aligned}
&\| \nablaH \vairbar\|^2_{\rL^2(G)} + \| \nablaH \vocnbar \|^2_{\rL^2(G)}+ \| \vairtilde \|^4_{\rL^4(\Omegaair)}+ \| \vocntilde \|^4_{\rL^4(\Omegaocn)}  
+\| \vair_p\|^2_{\rL^2(\Omegaair)}+  \| \vocn_z \|^2_{\rL^2(\Omegaocn)} + \| V \|^3_{\rL^3(G)} \\
& +\| \nablaH \Phi \|^2_{\rL^2(0,t;\rL^2(G))} + \| \nablaH \pi_s \|^2_{\rL^2(0,t;\rL^2(G))} + \| \DeltaH \vairbar \|^2_{\rL^2(0,t;\rL^2(G))} + \| \DeltaH \vocnbar \|^2_{\rL^2(0,t;\rL^2(G))} \\
& +  \| \nabla  \vair_p \|^2_{\rL^2(0,t;\rL^2(\Omegaair))}+ \| \nabla  \vocn_z \|^2_{\rL^2(0,t;\rL^2(\Omegaocn))} \leq b_1.
\end{aligned}
\end{equation*}
\end{step}

\begin{step}[Estimates for $(\vair,\vocn)$ in $\rL_t^\infty \rH^{1}_{xyp} \cap \rL^2_t \rH^{2}_{xyp} \times \rL_t^\infty \rH^{1}_{xyz} \cap \rL^2_t \rH^{2}_{xyz}$]\label{aprioristep5} \mbox{} \\
A multiplication of \eqref{eq:CAOsimplified}$_1$ and  \eqref{eq:CAOsimplified}$_3$ by $-\Delta \vair$ and  $-\Delta \vocn$ and an integration over $\Omegaair$, $\Omegaocn$ yields
\begin{equation*}
\begin{aligned}
&\quad \partial_t \| \nabla \vair \|^2_{\rL^2(\Omegaair)} + \| \Delta  \vair\|^2_{\rL^2(\Omegaair)} + \int_G \dt \vair |V|V \d \xH \\
&\leq C \bigl(\| \vairbar \cdot  \nablaH \vairbar \|^2_{\rL^2(G)} + \| \vairbar \cdot  \nablaH \vairtilde \|^2_{\rL^2(\Omegaair)} +  \| \vairtilde \cdot  \nablaH \vairbar \|^2_{\rL^2(\Omegaair)}  
+ \| \vairtilde \cdot \nablaH \vairtilde \|^2_{\rL^2(\Omegaair)} \\
 &\quad + \| \omegaair \vocn_p \|^2_{\rL^2(\Omegaair)} + \| \nablaH \Phi \|^2_{\rL^2(G)}  + \| \Tilde{f}^{\air} \|^2_{\rL^2(\Omegaair)} +\|\nabla \vair_p \|^2_{\rL^2(\Omegaair)} 
+ \| \vair_p \|^2_{\rL^2(\Omegaair)} \bigr)
\end{aligned}
\end{equation*}
as well as
\begin{equation*}
	\begin{aligned}
	&\quad \partial_t \| \nabla \vocn \|^2_{\rL^2(\Omegaocn)} + \| \Delta        \vocn\|^2_{\rL^2(\Omegaocn)} -\int_G \dt \vocn |V|V \d \xH \\
 &\leq C \bigl(\| \vocnbar \cdot \nablaH \vocnbar \|^2_{\rL^2(G)}+ \| \vocnbar \cdot \nablaH \vocntilde \|^2_{\rL^2(\Omegaocn)} +  \| \vocntilde \cdot \nablaH \vocnbar \|^2_{\rL^2(\Omegaocn)}  
+ \| \vocntilde \cdot \nablaH \vocntilde \|^2_{\rL^2(\Omegaocn)} \\
 &\quad + \| w \vocn_z \|^2_{\rL^2(\Omegaocn)} + \| \nablaH \pi_s \|^2_{\rL^2(G)} + \| \Tilde{f}^{\ocn} \|^2_{\rL^2(\Omegaocn)}\bigr),
	\end{aligned}
\end{equation*}
where we used  $-\Delta^{\atm} \vair =-\DeltaH \vair - \partial_p (p \partial_p \vair) =- \Delta \vair + (1-p^2) \partial_{pp} \vair - 2p \partial_p \vair$.
An addition of both equations and a combination of the estimates results in
\begin{equation*}
    \begin{aligned}
        &\quad \partial_t \bigl(\| \nabla \vair \|^2_{\rL^2(\Omegaair)} +\| \nabla \vocn \|^2_{\rL^2(\Omegaocn)} \bigr) + \delta \bigl(  \| \Delta \vair\|^2_{\rL^2(\Omegaair)} + 
\| \Delta \vocn\|^2_{\rL^2(\Omegaocn)} \bigr) + \frac{1}{3} \dt\|V \|^3_{\rL^3(G)} \\
        & \leq K_3(t) \bigl( \left \| \nabla \vair  \right \|^2_{\rL^2(\Omegaair)} + \| \nabla \vocn \|^2_{\rL^2(\Omegaocn)} \bigr) + K_4(t),
    \end{aligned}
\end{equation*}
with $\delta >0$, and $K_3$, $K_4$ are functions depending on $\| v_z\|_{\rL^2(\Omega)}$, $\| \nabla v_z\|_{\rL^2(\Omega)}$, $\| \vbar \|_{\rH^1(G)}$, $\| \vtilde \|_{\rL^4(\Omega)}$, $\| \pi \|_{\rL^2(G)}$ 
and $\| f \|_{\rL^2(\Omega)}$.
The functions $K_3$ and $K_4$ are hence integrable with respect to time by \autoref{aprioristep4}, so Gronwall's inequality yields the existence  of a continuous function $B_2$, depending on the initial data, the forcing terms and time, such that for $t \in [0,T]$, we have
\begin{equation*}
  \| \nabla \vair \|^2_{\rL^2(\Omegaair)} + \| \nabla \vocn \|^2_{\rL^2(\Omegaocn)} +\int_0^t \| \Delta \vair \|^2_{\rL^2(\Omegaair))} \d s+ \int_0^t\| \Delta \vocn \|^2_{\rL^2(\Omegaocn))} \d s  \leq B_2(t). \qedhere
\end{equation*}
\end{step}
\end{proof}

We finally  give a-priori bounds in the maximal regularity space $\E_{1,1,T}$ for $p =q=2$, leading to $\mu_c = 1$.

\begin{prop}[$\E_{1,1}$-Estimate] \label{prop:estimatemaxreg}
Let $(\vair, \vocn)$, $(\vair_0,\vocn_0)$ and $(\fair, \focn)$ be as in \eqref{eq:Assumptionsapriori} and \eqref{eq:Assumptionsapriori2}. 
Then there is a continuous function $B_3$ on $[0,T]$, depending on $\| \vair_0 \|_{\rH^1(\Omegaair)}$, $\| \vocn_0 \|_{\rH^1(\Omegaocn)}$, $\| f^{\air} \|_{\rL^2(0,T;\rL^2(\Omegaair))}$, 
$\|\focn \|_{\rL^2(0,T;\rL^2(\Omegaocn))}$ and $t$, such that 
\begin{equation*}
	\| \vair \|_{\E_{1,1, \air}} + \| \vocn \|_{\E_{1,1,\ocn} } \leq B_3(t), \quad t \in[0,T].
\end{equation*}
\end{prop}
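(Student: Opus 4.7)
The plan is to apply \autoref{cor:main} in the Hilbert case $p=q=2$, $\mu_c=1$ to the linearised CAO system, viewing the interior nonlinear transport terms $G^{\atm/\ocn}(v,v)$ as inhomogeneities and the fully nonlinear coupling $\pm B(\vair,\vocn) = \pm(\vair-\vocn)|\vair-\vocn|$ as inhomogeneous Neumann data on the interface. This yields
\begin{equation*}
\begin{aligned}
\|\vair\|_{\E_{1,1,\air}} + \|\vocn\|_{\E_{1,1,\ocn}}
&\le C\bigl(\|\vair_0\|_{\rH^1(\Omegaair)} + \|\vocn_0\|_{\rH^1(\Omegaocn)} + \|\fair\|_{\E_{0,1,\air}} + \|\focn\|_{\E_{0,1,\ocn}}\\
&\quad + \|G^\air(\vair,\vair)\|_{\E_{0,1,\air}} + \|G^\ocn(\vocn,\vocn)\|_{\E_{0,1,\ocn}}\\
&\quad + \|B(\vair,\vocn)\|_{\F_{1,\air}} + \|B(\vair,\vocn)\|_{\F_{1,\ocn}}\bigr).
\end{aligned}
\end{equation*}
Since the data terms are already acceptable, the task reduces to bounding the interior and boundary nonlinearities by continuous functions of $t$ using only the $\rL^\infty_t \rH^1 \cap \rL^2_t \rH^2$ estimates of \autoref{prop:linftyl2}.

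For the interior nonlinearity I would exploit the three-dimensional Agmon interpolation $\|v\|_{\rL^\infty(\Omega)} \le C\|v\|_{\rH^1(\Omega)}^{1/2}\|v\|_{\rH^2(\Omega)}^{1/2}$ together with the representation of the vertical velocity $\omegaair$ (respectively $\wocn$) via the horizontal divergence. This yields pointwise-in-time estimates of the form $\|G(v,v)(s)\|_{\rL^2} \le C\|v(s)\|_{\rH^1}^{3/2}\|v(s)\|_{\rH^2}^{1/2}$; integrating in time and applying Cauchy--Schwarz, the quantity $\|G(v,v)\|_{\E_{0,1}}^2$ is controlled by $\|v\|_{\rL^\infty_t\rH^1}^3 \|v\|_{\rL^2_t\rH^2}$, which is bounded by a continuous function of $t$ thanks to \autoref{prop:linftyl2}.

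The main obstacle is the boundary term in the Triebel--Lizorkin norm $\F_{1,\air/\ocn}$. A naive application of \autoref{lem:est of boundary term} would bound $\|B(\vair,\vocn)\|_{\F_{1,\air/\ocn}}$ by $\|\vair\|_{\E_{1,1,\air}}^2 + \|\vocn\|_{\E_{1,1,\ocn}}^2$, producing a circular estimate. The resolution is to rerun the underlying paraproduct decomposition of \autoref{lem:weighted paraprod estimate} with the time integrability split symmetrically: by interpolation $\rL^\infty(0,T;\rH^1(\Omega)) \cap \rL^2(0,T;\rH^2(\Omega)) \hookrightarrow \rL^4(0,T;\rH^{3/2}(\Omega))$, whose interface trace embeds into $\rL^4(0,T;\rW^{1/2,4}(G))$. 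Combined with the mixed derivative theorem, which places the time regularity in $\rF^{1/4}_{22}(0,T;\rL^2(G))$, both factors of the paraproduct are controlled by $\rL^\infty_t \rH^1 \cap \rL^2_t \rH^2$-bounds alone, and the circularity disappears.

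Combining these three estimates yields a bound of the form $\|\vair\|_{\E_{1,1,\air}} + \|\vocn\|_{\E_{1,1,\ocn}} \le F(t)$ where $F$ is a continuous function of $t$, the initial $\rH^1$-norms of $(\vair_0,\vocn_0)$ and the $\rL^2_t\rL^2$-norms of $(\fair,\focn)$, thereby defining the desired $B_3(t)$. The nontrivial step is the boundary term; the interior nonlinearity and the assembly are standard once \autoref{prop:linftyl2} is in hand.
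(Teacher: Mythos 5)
Your overall architecture matches the paper's: apply the linear optimal-data theorem with the nonlinear transport and interface terms moved to the right-hand side, then close the estimate using the $\rL^\infty_t\rH^1\cap\rL^2_t\rH^2$ bounds of \autoref{prop:linftyl2}. The interior-nonlinearity estimate is fine (the paper uses the slightly cruder $\|G(v)\|_{\E_{0,1}}^2\le C\|v\|^2_{\rL^\infty_t\rH^1}\|v\|^2_{\rL^2_t\rH^2}$, your Agmon/Cauchy--Schwarz version is equivalent in spirit), and the Besov part $\rL^2_t\rB^{1/2}_{22}$ of the boundary norm you could handle as the paper does.

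However, there is a genuine gap in your treatment of the Triebel--Lizorkin part $\|V|V|\|_{\rF^{1/4}_{22}(0,T;\rL^2(G))}$. You claim that after the paraproduct decomposition ``both factors of the paraproduct are controlled by $\rL^\infty_t\rH^1\cap\rL^2_t\rH^2$-bounds alone,'' invoking the mixed derivative theorem to generate the time-fractional regularity $\rF^{1/4}_{22}$. But the mixed derivative theorem applies to the maximal-regularity intersection $\rH^{1,2}(0,T;\rL^2)\cap\rL^2(0,T;\rH^2)=\E_{1,1}$, where the time derivative is controlled; it gives nothing for the energy space $\rL^\infty(0,T;\rH^1)\cap\rL^2(0,T;\rH^2)$, which carries \emph{no} time regularity at all (a function there may oscillate arbitrarily fast in $t$ without leaving the space) and hence cannot embed into $\rH^{1/4}_t$ of anything. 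So the circularity you identified does not actually disappear: one factor of the paraproduct genuinely needs the $\E_{1,1}$-norm of $V$, i.e.\ of $\vair$ and $\vocn$. The paper resolves this not by removing the $\E_{1,1}$-dependence but by interpolating the problematic factor, $\|V\|_{\rH^{1/4}(\rW^{1/4,4})}\le \|V\|^{1/2}_{\rH^{1/2}(\rW^{1/4,4})}\|V\|^{1/2}_{\rL^2(\rW^{1/4,4})}$, so that the $\E_{1,1}$-contribution appears only with a small power ($\le\eps\|V\|_{\E_{1,1,T}}+C\|V\|^3_{\rL^\infty_t\rH^1}$) and can then be absorbed into the left-hand side. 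That absorption step is essential and cannot be dispensed with by redistributing the time integrability; your proof needs to be amended to include it.
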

		
\begin{proof}
As before, we will not distinguish between $\vair$ and $\vocn$ in the proof. 
Thanks to \autoref{thm:optdata}, we obtain
\begin{equation*}
        \left \| v \right \|_{\E_{1,1,T}} \leq C \left ( \|v_0\|_{\rH^1(\Omega)} + \| f \|_{\E_{0,1,T}}  + \| G(v) \|_{\E_{0,1,T}} + \| B(v) \|_{\F_{1,T}}  \right)
\end{equation*}
It remains to bound the nonlinear terms  $G(v)= v\cdot \nablaH v + w \dz v$ and $B(V)= \pm V|V|$ by the $\rL^\infty \rH^{1}$- and~$\rL^2 \rH^{2}$-norms.
Concerning $G$, we verify that  $\left \| G(v) \right \|^2_{\E_{0,1,T}} \leq C \left \|v \right \|^2_{\rL^\infty(0,T;\rH^{1}(\Omega))} \cdot  \left \|v \right \|^2_{\rL^2(0,T;\rH^{2}(\Omega))}  \leq CB_2^4$.
Next, observe that $\| B (V) \|_{\F_{1,T}} \leq C \| V|V| \|_{\rL^2(0,T; \rB_{22}^{ \nicefrac{1}{2}}(G))} +C \| V|V| \|_{\rF_{22}^{\nicefrac{1}{4}}(0,T; \rL^2(G))}$.
Following the lines of~\eqref{eq:estimateBbesov}, and choosing $r=r'=2$, we obtain for  $p=q=2$ the estimate
\begin{equation*}
\begin{aligned}
\| V|V| \|_{\rL^2(0,T; \rB_{22}^{ \nicefrac{1}{2}}(G))} 
        &\leq C \| V \|_{\rL^2(0,T;\rW^{\nicefrac{3}{4}+\eps,4}(\Omega))} \cdot \| V \|_{\rL^\infty(0,T;\rW^{\nicefrac{1}{4},4}(\Omega))}  \\ 
  &\leq C  \| V \|_{\rL^2(0,T;\rH^{2}(\Omega))} \cdot \| V \|_{\rL^\infty(0,T;\rH^{1}(\Omega))} 
        \leq C B_2^2.
	\end{aligned}
\end{equation*}
In a similar way, invoking the maximal regularity embedding $\E_{1,1} \hookrightarrow \rH^{\nicefrac{1}{2}}(0,T;\rH^{1}(\Omega))$ and \autoref{lem:weighted paraprod estimate} as well as 
\eqref{eq:rel emb Bessel pot Triebel-Lizorkin}, see also \cite[Proposition~1.2]{Chae:02}, we find that 
\begin{equation*}
	\begin{aligned}
		\| V|V| \|_{\rF_{22}^{\nicefrac{1}{4}}(0,T;\rL^2(G))} 
	    &\leq C \| V \|_{\rF^{\nicefrac{1}{4}}_{22}(0,T; \rL^4(G))} \cdot \| V \|_{\rL^\infty(0,T;\rL^4(G))} \\
        &\leq C \| V \|_{\rH^{\nicefrac{1}{4}}(0,T; \rW^{\nicefrac{1}{4},4}(\Omega))} \cdot \| V \|_{\rL^\infty(0,T;\rW^{\nicefrac{1}{4},4}(\Omega))} \\ 
        &\leq C \| V \|^{\nicefrac{1}{2}}_{\rH^{\nicefrac{1}{2}}(0,T; \rW^{\nicefrac{1}{4},4}(\Omega))}\cdot  \| V \|^{\nicefrac{1}{2}}_{\rL^2(0,T; \rW^{\nicefrac{1}{4},4}(\Omega))} \cdot \| V \|_{\rL^\infty(0,T;\rW^{\nicefrac{1}{4},4}(\Omega))} \\ 
        &\leq\eps \| V \|_{\E_{1,1,T}}+ C \| V \|^3_{\rL^\infty(0,T;\rH^{1}(\Omega))}  
        \leq  \eps \bigl( \|\vair \|_{\E_{1,1,\air}} + \|\vocn \|_{\E_{1,1,\ocn}}\bigr) + C B_2^3.
	\end{aligned}
\end{equation*}
An absorption argument yields the desired estimate with $B_3 := C(B_2^4+ B_2^3 + B_2^2)$. 
\qedhere
\end{proof}

\begin{proof}[Proof of \autoref{thm:globalwellposed}]
Combining \autoref{prop:localwellposed} with  \autoref{thm:blowup} and \autoref{prop:estimatemaxreg}, we conclude that the unique, local, strong solution to \eqref{eq:CAO} exists globally provided $p=q=2$.
We thus obtain the assertion of our main result, \autoref{thm:globalwellposed}, for the particular situation of $p=q=2$. 
 
To treat  the general situation of $p,q$, we consider first $p \geq 2$ and $ q=2$. 
Let $(\vair, \vocn) \in \E_{1,\mu_c,T'}$ be the local, strong  solution of \eqref{eq:CAO} given by \autoref{prop:localwellposed}. 
Note that $\E_{1,\mu_c,T'} \hookrightarrow C((\delta,T'];\rH_{\sigmabar}^1(\Omega^\atm) \times \rH_{\sigmabar}^1(\Omega^\ocn))$. We hence find $T''$ with $\delta < T''<T'$ and taking $v(T'') \in \rH_{\sigmabar}^1(\Omega^\atm) \times \rH_{\sigmabar}^1(\Omega^\ocn)$ as new initial value, we observe that 
$v$ is a solution within the $p=q=2$-framework and exists thus globally. 
The embeddings
\begin{equation*}
    \E_{1,1,T'} \hookrightarrow \rH^{\theta,2}(0,T';\rH^{2(1-\theta),2}(\Omega)) \hookrightarrow \rL^p(0,T';\rH^{1+\nicefrac{2}{p},2}(\Omega)),
\end{equation*}
for $\theta = \frac{1}{2}- \frac{1}{p}$, combined with  \autoref{thm:blowup} yield that $v$ is a global solution for the case $p \geq 2$. 

We now study the case  $2 \leq q \leq p$. Let again $v$ be the local solution. As above, we find $T''$ such that~$\delta < T''<T'$ and consider $v(T'')$  as initial value within the $p,2$-framework. Noting 
that  
\begin{equation*}
    \E_{1,\mu_c,T'} \subset \rL^p_{\mu_c}(0,T';\rH^{2}(\Omega)) \hookrightarrow \rL^p(0,T';\rH^{\nicefrac{2}{p}+\nicefrac{2}{q},q}(\Omega)) = \rL^p(0,T';\rX_{\mu_c})
\end{equation*}
holds true in particular  whenever  $2\leq q \leq p$, we see  by \autoref{thm:blowup} that $v$ exists globally.

Finally, let $q\in (1,2)$. The above argument shows that $v$ exists globally within the $p,2$-framework.  Thanks to
$\rH^2(\Omega) \subset \rH^{2,q}(\Omega)$, $v$ is already a unique, global, strong solution in $\rL^q$, completing the proof. 
\qedhere 
\end{proof}

\section{Higher regularity}\label{sec:regularity}
We start by noting that the assertions of  \autoref{cor:regularity} are based on Angenent's parameter trick, see \cite{A:90,A:90(1)}.
Note that \autoref{cor:regularity}(i) is a property in the interior and is hence not effected by the boundary conditions. It follows thus from the arguments in \cite{GGHHK:20}. 
The same is valid for \autoref{cor:regularitywithout}.
In the sequel,  we will only prove \autoref{cor:regularity}(ii), since part~(iii) can be shown in a similar manner. 

\begin{proof}[Proof of {\autoref{cor:regularity}(ii)}.]
To simplify notation, we only consider $G = \R^2$ and identify $\Omegaatm$ with $G \times (0,\hatm)$. This can be obtained  rigorously using the translation in $p$-coordinate as a chart. 

Let $x_{\mathrm{H},0} = (x_0,y_0,0) \in \Gamma$ be a point at the interface. Further, let $R > 0$ and $3r^{\ocn} \in (0,1)$ as well as~$3r^\atm \in (p_a,p_s)$.
Consider the truncated shifts
   \begin{align*}
       \tau_{\xi}^{\atm} (x_{\mathrm{H}},p) &:= (x_{\mathrm{H}}+ \xi \zeta_0^{\atm}(p),p),
       \quad \for x_{\mathrm{H}} \in B_{\R^2}(0,3R) \quad \text{and} \quad p \in (3r^{\atm},p_s], \\
       \tau_{\xi}^{\ocn} (x_{\mathrm{H}},z) &:= (x_{\mathrm{H}}+ \xi \zeta_0^{\ocn}(z),z),
       \quad \for x_{\mathrm{H}} \in B_{\R^2}(0,3R) \quad \text{and} \quad z \in (-3 r^\ocn, 0], 
   \end{align*}
 where $\zeta_0^{\atm}, \zeta_0^{\ocn}$ are smooth cut-off functions on $\R$ equal to $1$ for $|z| \leq 2 r$ and 0 for $|z| > \frac{5}{2} r$. Here $\xi \in B_{\R^2}(0,3R)$ acts only in tangential direction. 
 Besides, define neighbourhoods $U^{\atm} := B_{\R^2}(0,3R) \times (3r^{\atm},0)$ of $(\xH,p_s)$ and~$U^{\ocn} := B_{\R^2}(0,3R) \times (-3r^{\ocn},0)$ of $(x_{\mathrm{H},0},0)$, and set
    \begin{equation*}
        \tau_{\lambda,\xi}^{\atm}(t,\xH,p) := 
        \begin{cases} 
        (t+\lambda t, \tau_{t\xi}(\xH,p_s)), &\text{ for } (t,\xH,p_s) \in (0,T') \times U^{\atm}, \\
        (t+\lambda t, \xH,p), &\text{ for } (t,\xH,p) \in (0,T') \times \Omegaatm \setminus U^{\atm}
        \end{cases}  
    \end{equation*}
    as well as 
        \begin{equation*}
    	\tau_{\lambda,\xi}^{\ocn}(t,\xH,z) := 
    	\begin{cases} 
    		(t+\lambda t, \tau_{t\xi}(\xH,0)), &\text{ for } (t,\xH,0) \in (0,T') \times U^{\atm}, \\
    		(t+\lambda t, \xH,z), &\text{ for } (t,\xH,z) \in (0,T') \times \Omegaocn \setminus U^{\ocn},
    	\end{cases}  
    \end{equation*}
where $T' \in (0,T)$ is such that $(1+\lambda)t \in (0,T)$ for all $t \in (0,T')$. In addition, define the push-forward operator by 
    \begin{align*}
        T_{\lambda,\xi} \binom{\vatm}{\vocn}
        := \binom{\vatm \circ \tau_{\lambda,\xi}^{\atm}}{\vocn \circ \tau_{\lambda,\xi}^{\ocn}} .
    \end{align*}
Note that $T_{\lambda,\xi} \colon \E_{0,\mu_c,T} \to \E_{0,\mu_c,T}$ is an isomorphism. Moreover, $T_{\lambda,\xi} \colon \E_{1,\mu_c,T} \to \E_{1,\mu_c,T}$ is also an isomorphism, 
since $\tau^{\atm}_{\lambda,\xi}$ and $\tau^{\ocn}_{\lambda,\xi}$ act only tangentially. Furthermore, we set 
    \begin{align*}
    	\tau^{\atm} \colon (-r^\atm,r^\atm) \times B_{\R^2}(0,3R) \to \mathrm{Diff}^\infty((0,T')\times \Omegaatm) \colon (\lambda,\xi) \mapsto \tau_{\lambda,\xi}^{\atm}, \\
    	\tau^{\ocn} \colon (-r^\ocn,r^\ocn) \times B_{\R^2}(0,3R) \to \mathrm{Diff}^\infty((0,T')\times \Omegaocn)\colon (\lambda,\xi) \mapsto \tau_{\lambda,\xi}^{\ocn}.
    \end{align*}
Finally, define $\tilde{\mathrm{H}} \colon \E_{1,\mu_c,T} \to \E_{0,\mu_c,T} \times \F_{\mu_c,T} \times \rB_{qp,\sigmabar}^{\nicefrac{2}{q}}(\Omegaatm) \times  \rB_{qp,\sigmabar}^{\nicefrac{2}{q}}(\Omegaocn)$ such that 
$\tilde{\mathrm{H}}(\vatm,\vocn) = (\fatm,\focn,0,\vatm_0,\vocn_0)$ is equivalent to \eqref{eq:CAOsimplified} and
    \begin{equation*}
    	\mathrm{H}(\lambda,\xi,\vatm,\vocn) := T_{\lambda,\xi}\tilde{\mathrm{H}}(T_{\lambda,\xi}^{-1} (\vatm,\vocn)^\top ) .
    \end{equation*}
Let us note that $\mathrm{H} \in \rC^1$. The arguments given in  \autoref{sec:linearizedproblem} imply then that the Fr{\'e}chet derivative 
$D_{(\vatm,\vocn)} \mathrm{H}(0,0,\widehat\vatm,\widehat\vocn):$ $\E_{1,\mu_c,T} \to \E_{0,\mu_c,T}\times \F_{\mu_c,T} \times \rB_{qp,\sigmabar}^{\nicefrac{2}{q}}(\Omegaatm) \times  \rB_{qp,\sigmabar}^{\nicefrac{2}{q}}(\Omegaocn)$ with respect to 
$(\vatm,\vocn)$ at $(0,0,\widehat\vatm,\widehat\vocn)$ with $\mathrm{H}(0,0,\widehat\vatm,\widehat\vocn) = (\fatm,\focn,0,\vatm_0,\vocn_0)$ 
is again an isomorphism. Hence, by the implicit function theorem, there exists $\delta > 0$ and a function $\Phi \in \rC^1((-\delta,\delta)\times B_{\R^2}(0,\delta); \E_{1,\mu_c,T})$ such that
\begin{equation*}
    \mathrm{H}(\lambda,\xi,\Phi(\lambda,\xi)) = 0
\end{equation*}
for all $\lambda \in (-\delta,\delta)$ and $\xi \in B_{\R^2}(0,\delta)$.
The trace theorem implies 
    \begin{equation*}
    \tr_{\Gamma} \E_{1,\mu_c,T} = \F_{\mu_c,T} = \rF^{1-\nicefrac{1}{2p}}_{p,q,\mu_c}(0,T;\rLq(\Gamma;\R^2)) \cap \rL^p_{\mu_c}(0,T;\rB_{qq}^{2-\nicefrac{1}{q}}(\Gamma;\R^2)),
    \end{equation*} 
and the mixed derivative theorem yields 
    \begin{align*}
        \rF^{1-\nicefrac{1}{2p}}_{p,q,\mu_c}(0,T;\rLq(\Gamma;\R^2)) \cap \rL^p_{\mu_c}(0,T;\rB_{qq}^{2-\nicefrac{1}{q}}(\Gamma;\R^2))
        &\hookrightarrow 
        \rF^{(1-\nicefrac{1}{2p})\cdot\theta}_{p,q_\theta,\mu_c}(0,T;\rB_{qq}^{(2-\nicefrac{1}{q})\cdot (1-\theta)}(\Gamma;\R^2)) \\
        &\hookrightarrow
        \rC^\alpha(0,T;\rC^{1,\alpha}(\Gamma)),
    \end{align*}
for  $\frac{1}{q_\theta} = \frac{\theta}{q} + \frac{(1-\theta)}{2}$ provided $( 1-\frac{1}{2p})\theta - \frac{1}{p} > \alpha$ and $(2-\frac{1}{q})(1-\theta) - \frac{2}{q} > 1+\alpha$. 
A parameter $\theta \in (0,1)$ can be found provided  $3 \alpha + \frac{2}{p} + \frac{3}{q} < 1$.  Hence, 
$\vatm,\vocn, \nablaH \vatm, \nablaH \vocn \in \rC^\alpha((0,T) \times \Gamma)$ for  $\alpha \in (0, \nicefrac{1}{3}-\nicefrac{2}{3p}-\nicefrac{1}{q})$.
For fixed $(t_0,x_{\mathrm{H},0})$, we see that  
    \begin{equation*}
        (\lambda,\xi) \to \binom{\vatm_{\lambda,\xi}}{\vocn_{\lambda,\xi}}(t,x_{\mathrm{H}}) 
        = \binom{\vatm ((1+\lambda)t,x+ t \cdot x_{\mathrm{H}},p_s)}{\vocn ((1+\lambda)t,x+ t \cdot x_{\mathrm{H}},0)}
    \end{equation*}
belongs to  $\rC^1$ and even to  $\rC^{1,\alpha}$ if $\alpha \in (0, \nicefrac{1}{3}-\nicefrac{2}{3p}-\nicefrac{1}{q})$. Thus, we have $\vatm$, $\vocn$, $\nablaH \vatm$, $\nablaH \vocn \in \rC^{1,\alpha}((0,T) \times \Gamma)$
for  $\alpha \in (0, \nicefrac{1}{3}-\nicefrac{2}{3p}-\nicefrac{1}{q})$ near $(t_0,x_{\mathrm{H},0})$.  
\end{proof}

\medskip 

{\bf Acknowledgements}
Tim Binz would like to express his gratitude to DFG for support via project  538212014.
Felix Brandt acknowledges the support by the German National Academy of Sciences Leopoldina through the Leopoldina Fellowship Program with grant number LPDS~2024-07.
Matthias Hieber and Tarek Z\"ochling acknowledge the support by DFG project FOR~5528.


\begin{thebibliography}{99}			
\bibitem{AG:13}
K. Abe, Y. Giga,
Analyticity of the Stokes semigroup in spaces of bounded functions.  
{\it Acta Math.} {\bf 211}~(2013), 1--46. 

			\bibitem{Ama:95}
			H.~Amann,
			{\it Linear and Quasilinear Parabolic Problems.} Monographs in Mathematics, vol.~89, Birkh\"auser, Boston, 1995.
			
			\bibitem{Ama:19}
			H.~Amann,
			{\it Linear and Quasilinear Parabolic Problems. Vol.~II. Function Spaces.} Monographs in Mathematics, vol.~106, Birkh\"auser/Springer, Cham, 2019.

			\bibitem{A:90}
			S.B.~Angenent,
			Parabolic equations for curves on surfaces. I. Curves with $p-$integrable curvature.
			{\it Ann. of Math.~(2)}~{\bf 132} (1990), 451--483.
			
			\bibitem{A:90(1)}
			S.B.~Angenent,
			Nonlinear analytic semiflows.
			{\it Proc. Roy. Soc. Edinburgh Sect. A}~{\bf 115} (1990), 91--107.

\bibitem{BCD:11}
H.~Bahouri, J.-Y.~Chemin, R.~Danchin, 
{\it Fourier Analysis and Nonlinear Partial Differential Equations}.
{Grundlehren der math.~Wissenschaften, 343}, Springer, Heidelberg, 2011.
			
\bibitem{BHHS:23}
 T.~Binz, M.~Hieber, A.~Hussein, M.~Saal,
The primitive equations with stochastic wind driven boundary conditions. 
{\it J.~Math. Pures Appl.~(9)}~{\bf 183} (2024), 76-101. 
		
\bibitem{BGMR:03}
D.~Bresch, F.~Guill\'en-Gonz\'alez, N.~Masmoudi, M.~Rodr\'iguez-Bellido,  
Asymptotic derivation of a Navier condition for the primitive equations.  
{\it Asymptot. Anal.}~{\bf 33} (2003), 237--259.

	
			\bibitem{BS:01}
			D.~Bresch, J.~Simon, 
			On the effect of friction on wind driven shallow lakes.
			{\it J. Math. Fluid Mech.}~{\bf 3} (2001), 231--258.
			
			\bibitem{CT:07}
			C.~Cao, E.S.~Titi, 
			Global well-posedness of the three-dimensional viscous primitive equations of large scale ocean and atmosphere dynamics.
			{\it Ann. of Math.~(2)}~{\bf 166} (2007), 245--267. 
			
\bibitem{Chae:02}
D.~Chae,
On the well-posedness of the Euler equations in the Triebel-Lizorkin spaces.
{\it Comm. Pure Appl. Math.}~{\bf 55} (2002), 654--678.
			
\bibitem{CDGG:06}
J.-Y.~Chemin, B.~Desjardins, I.~Gallagher, E.~Grenier, 
{\it An Introduction to Rotating Fluids and the Navier-Stokes Equations}.
{Oxford Lecture Series in Math., 32}, Oxford University Press, Oxford, 2006.

\bibitem{DGV:17}
A.-L.~Dalibard, D.~G\'erard-Varet, 
Nonlinear boundary layers for rotating fluids.
{\it Anal. PDE}~{\bf 10} (2017), 1--42.

\bibitem{DS:09}
A.-L.~Dalibard, L.~Saint-Raymond, 
Resonant wind-driven oceanic motions.
{\it C. R. Math. Acad. Sci. Paris}~{\bf 347} (2009), 451--456.
			
\bibitem{DHP:03} 
R.~Denk, M.~Hieber, J.~Pr\"uss,
{\it $\mathcal{R}$-Boundedness, Fourier Multipliers and Problems of Elliptic and Parabolic Type}. 
{Mem. Amer. Math. Soc.}~{\bf 166} 2003, no.~788.
			
			\bibitem{DHP:07}
			R.~Denk, M.~Hieber, J.~Pr\"uss, 
			Optimal $\rLp$-$\rLq$-estimates for parabolic boundary value problems with inhomogeneous data.
			{\it Math. Z.}~{\bf 257} (2007), 193--224.
			
			\bibitem{DG:00}
			B.~Desjardins, E.~Grenier, 
			On the homogeneous model of wind-driven ocean circulation.
			{\it SIAM J. Appl. Math.}~{\bf 60} (2000), 43--60.

\bibitem{FKS:05} 
R.~Farwig, H.~Kozono, H.~Sohr,
{An $\rL^q$-approach to Stokes and Navier-Stokes equations in general domains}.
{\it Acta Math.}~{\bf 195} (2005), 21--53.  
			

\bibitem{Gal:11} 
G.P.~Galdi,
{\it An Introduction to the Mathematical Theory of the Navier-Stokes Equations. Steady-State Problems. Second Edition}.
{Springer Monographs in Mathematics}, Springer, New York, 2011. 
			
			\bibitem{GGHHK:17}
			Y.~Giga, M.~Gries, M.~Hieber, A.~Hussein, T.~Kashiwabara,
			Bounded $\Hinfty-$calculus for the hydrostatic Stokes operator on $\rL^p-$spaces and applications.
			{ \it Proc. Amer. Math. Soc.}~{\bf 145} (2017), 3865--3876.
			
			\bibitem{GGHHK:20}
			Y.~Giga, M.~Gries, M.~Hieber, A.~Hussein, T.~Kashiwabara,
			Analyticity of solutions to the primitive equations.
			{\it Math. Nachr.}~{\bf 293} (2020), 284--304.
			
\bibitem{GGHHK:20b}
Y.~Giga, M.~Gries, M.~Hieber, A.~Hussein, T.~Kashiwabara,
The hydrostatic Stokes semigroup and well-posedness of the primitive equations on spaces of bounded functions.
{\it J. Funct. Anal.}~{\bf 279} (2020), 108561.
			
			\bibitem{HK:16}
			M.~Hieber, T.~Kashiwabara,
			Global strong well-posedness of the three dimensional primitive equations in $\rL^p$-spaces.
			{\it Arch. Ration. Mech. Anal.}~{\bf 221} (2016), 1077--1115. 
			
\bibitem{KZ:07}
I.~Kukavica, M.~Ziane,
On the regularity of the primitive equations of the ocean.
{\it Nonlinearity}~{\bf 20} (2007), 2739--2753.

\bibitem{KS:04} H.~Kozono, Y.~Shimada,
Bilinear estimates in homogenous Triebel-Lizorkin spaces and the Navier-Stokes equations.    
{\it Math. Nachr.}~{\bf 276} (2004), 63--74.

\bibitem{LR:16} 
P.G.~Lemari\'e-Rieusset,
{\it The Navier-Stokes Problem in the 21st Century}.
CRC Press, Boca Raton, FL, 2016. 

\bibitem{LT:16} J.~Li, E.S.~Titi,
Existence and uniqueness of weak solutions to viscous primitive equations for a certain class of discontinuous initial data. 
{\it SIAM J. Math. Anal.}~{\bf 49} (2017), 1--28.
			
\bibitem{LTW:92a} J.-L.~Lions, R.~Temam, S.~Wang,
New formulations of the primitive equations of atmosphere and applications.
{\it Nonlinearity}~{\bf 5} (1992), 237--288.

\bibitem{LTW:92b}
J.-L.~Lions, R.~Temam, S.~Wang,
On the equations of the large-scale ocean.
{\it Nonlinearity}~{\bf 5} (1992), 1007--1053.
			
\bibitem{LTW:93}
J.-L.~Lions, R.~Temam, S.~Wang,
Models for the coupled atmosphere and ocean. (CAO I,II).
{\it Comput. Mech. Adv.}~{\bf 1} (1993), 3--119.

\bibitem{LTW:95}
J.-L.~Lions, R.~Temam, S.~Wang,
Mathematical theory for the coupled atmosphere-ocean models. (CAO III).
{\it J. Math. Pures Appl.~(9)}~{\bf 74} (1995), 105-163. 

\bibitem{MV:12}
M.~Meyries, M.~Veraar,
Sharp embedding results for spaces of smooth functions with power weights.
{\it Studia Math.}~{\bf 208} (2012), 257--293.
			
\bibitem{PTZ:09}
M.~Petcu, R.~Temam, M.~Ziane,
Some mathematical problems in geophysical fluid dynamics. 
In: {\it Handbook of Numerical Anaysis}, (2009), 577--750. 


			\bibitem{PS:16} 
			J.~Pr\"uss, G.~Simonett,
			{\it Moving Interfaces and Quasilinear Parabolic Evolution Equations}. Monographs in Mathematics, vol.~105, Birkh\"auser/Springer, Cham, 2016.
			
			\bibitem{Stein:93} 
			E.M.~Stein,
			{\it Harmonic Analysis: Real-Variable Methods, Orthogonality and Oscillatory Integrals}. Princeton Math.~Ser., vol.~43, Princeton Univ.~Press, Princeton, NJ, 1993.
			
			\bibitem{Tri:78}
			H.~Triebel,
			{\it Interpolation Theory, Function Spaces, Differential Operators}. North-Holland, 1978.
			
		\end{thebibliography}
\end{document}